\tikzset{commutative diagrams/.cd}
\theoremstyle{plain}
\newtheorem{thm}{Theorem}[section]
\newtheorem{lem}[thm]{Lemma}
\newtheorem{prop}[thm]{Proposition}
\newtheorem{cor}[thm]{Corollary}
\newtheorem*{thm*}{Theorem}
\newtheorem*{cor*}{Corollary}
\newenvironment{repcorollary}[1]
  {\rcor}
  {\endrcor}
\newenvironment{reptheorem}[1]
  {\rthm}
  {\endrthm}
\theoremstyle{definition}
\newtheorem{defn}[thm]{Definition}
\newtheorem*{defn*}{Definition}
\theoremstyle{remark}
\newtheorem{rmk}[thm]{Remark}
\newtheorem*{rmk*}{Remark}
\theoremstyle{remark}
\newtheorem{nt}[thm]{Notation}
\DeclareFontFamily{OMX}{MnSymbolE}{}
\DeclareSymbolFont{MnLargeSymbols}{OMX}{MnSymbolE}{m}{n}
\DeclareFontShape{OMX}{MnSymbolE}{m}{n}{
    <-6>  MnSymbolE5
   <6-7>  MnSymbolE6
   <7-8>  MnSymbolE7
   <8-9>  MnSymbolE8
   <9-10> MnSymbolE9
  <10-12> MnSymbolE10
  <12->   MnSymbolE12
}{}
\DeclareFontShape{OMX}{MnSymbolE}{b}{n}{
    <-6>  MnSymbolE-Bold5
   <6-7>  MnSymbolE-Bold6
   <7-8>  MnSymbolE-Bold7
   <8-9>  MnSymbolE-Bold8
   <9-10> MnSymbolE-Bold9
  <10-12> MnSymbolE-Bold10
  <12->   MnSymbolE-Bold12
}{}
\let\llangle\@undefined
\let\rrangle\@undefined
\DeclareMathDelimiter{\llangle}{\mathopen}%
                     {MnLargeSymbols}{'164}{MnLargeSymbols}{'164}
\DeclareMathDelimiter{\rrangle}{\mathclose}%
                     {MnLargeSymbols}{'171}{MnLargeSymbols}{'171}
\newcommand\ackname{Acknowledgements}
  \newenvironment{acknowledgements}{%
      \titlepage
      \null\vfil
      \@beginparpenalty\@lowpenalty
      \begin{center}%
        \bfseries \ackname
        \@endparpenalty\@M
      \end{center}}%
     {\par\vfil\null\endtitlepage}
  \newenvironment{acknowledgements}{%
      \if@twocolumn
        \section*{\abstractname}%
      \else
        \small
        \begin{center}%
          {\bfseries \ackname\vspace{-.5em}\vspace{\z@}}%
        \end{center}%
        \quotation
      \fi}
      {\if@twocolumn\else\endquotation\fi}
\newcommand{\altfrac}[2]{\ifmmode\def\tmp{$}\else\def\tmp{}\fi\mbox{%
    {\raisebox{.24\ht\strutbox}{\tmp#1\tmp}}%
    \kern-2.2pt\scalebox{1.6}[1.5]{/}\kern-1.8pt%
    {\tmp#2\tmp}%
    }}
\newcommand{\frpp}{\mathop{\scalebox{1.5}{\raisebox{-0.2ex}{$\ast$}}}}%
\newcommand{\salbar}{\overline{Sal}(\Gamma)}
\newcommand{\cX}{\mathcal{X}}
\newcommand{\cY}{\mathcal{Y}}
\newcommand{\Sn}{\mathfrak{S}_n}
\newcommand{\A}{\mathrm{A}}
\newcommand{\B}{\mathcal{B}r}
\newcommand{\M}{\mathrm{M}}
\newcommand{\W}{\mathrm{W}}
\newcommand{\VB}{\mathcal{VB}r}
\newcommand{\VA}{\mathrm{VA}}
\newcommand{\KVA}{\mathrm{KVA}}
\newcommand{\PVA}{\mathrm{PVA}}
\newcommand{\CA}{\mathrm{CA}}
\newcommand{\id}{\mathrm{id}}
\newcommand{\hGamma}{\widehat{\Gamma}}
\newcommand{\hm}{\widehat{m}}
\newcommand{\hmbg}{\widehat{m}_{\beta,\gamma}}
\newcommand{\Aut}{\mathrm{Aut}}
\newcommand{\Lm}{\mathrm{L}}
\title{On Decomposability of Virtual Artin Groups}
\author{Federica Gavazzi}
\date{\today}
\begin{document}

\maketitle

\begin{abstract}\noindent
 A group is called decomposable if it can be expressed as a direct product of two proper subgroups, and indecomposable otherwise. This paper explores the decomposability of virtual Artin groups, which were introduced by Bellingeri, Paris, and Thiel as a generalization of classical Artin groups within the framework of virtual braid theory. We establish that for any connected Coxeter graph $\Gamma$, the associated virtual Artin group $\VA[\Gamma]$ is indecomposable. Specifically, virtual braid groups are indecomposable. As a consequence of the indecomposability result, we deduce that studying the automorphism group of a virtual Artin group reduces to analyzing the automorphism groups of its irreducible components.

\medskip
\medskip

{\footnotesize
\noindent \emph{2020 Mathematics Subject Classification.} 20F36.

\noindent \emph{Key words.} Virtual Artin groups, virtual braids, direct decomposition, Artin groups, automorphism groups.}

\end{abstract}

\section*{Introduction}
A group $G$ is said to be \textit{directly decomposable} if it can be expressed as a direct product of two proper subgroups, $G=H\times K$. Otherwise, it is called \textit{directly indecomposable}. In this work, by convention, we will omit the term \textit{directly}. Some groups are well-known examples of indecomposability, such as infinite cyclic groups or free groups. Since the direct factors of a group are necessarily normal, it follows immediately that simple groups are indecomposable. \\
Another notable example is the symmetric group $\Sn$ on $n$ elements, which is indecomposable because its non-trivial normal subgroups must contain the alternating group $\mathrm{Alt_n}$, whose centralizer in $\Sn$ is trivial.\\
\\
\noindent The decomposability problem has been extensively studied for irreducible and finitely generated Coxeter groups. Luis Paris addressed this in \cite{Par07}, as did de Cornulier and de la Harpe in \cite{DeCHarpe}. Later, Nuida extended this investigation to infinitely generated Coxeter groups in \cite{Nui06}.\\
\\
\noindent For Artin groups, Paris demonstrated in \cite{Par04} that irreducible Artin groups of spherical type are indecomposable. However, for affine or more general irreducible Artin groups, the question of indecomposability remains open.\\
\\
\noindent In this work, we investigate the decomposability problem for irreducible virtual Artin groups and establish that they are always indecomposable. As a consequence, studying the automorphism group of these groups reduces to analyzing the automorphism groups of their irreducible components. As part of our analysis, we also prove the indecomposability of certain non-spherical type Artin groups.\\
\\
\noindent Notably, in the study of Artin groups, it is quite rare to obtain results that hold in full generality without imposing assumptions on the Coxeter graphs. Nevertheless, our indecomposability result for virtual Artin groups is one such exception. \\
\\
\noindent Virtual Artin groups were introduced by Bellingeri, Paris and Thiel in \cite{BellParThiel} as a generalization of the concept of virtual braids to all Artin groups. The notion of virtual braids was first defined by Kauffman in his seminal work on virtual knots and links \cite{Kauff}. Virtual braids extend classical braids by introducing additional generators, called virtual generators, which satisfy relations analogous to those of classical braid generators.\\
\\
\noindent The \textit{virtual braid group on $n$ strands}, denoted by $\VB_n$, is defined by a presentation with generators consisting of:
\begin{itemize}
    \item \textit{Classical generators} $\{\sigma_1, \ldots, \sigma_{n-1}\}$, and
    \item \textit{Virtual generators} $\{\tau_1,\ldots ,\tau_{n-1}\}$.
\end{itemize}
\noindent These generators satisfy the following relations:
    \begin{enumerate}
        \item [(VB1)] $\sigma_i\sigma_{i+1}\sigma_i=\sigma_{i+1}\sigma_i\sigma_{i+1}$ for all $1\leq i\leq n-2$; and $\sigma_i\sigma_j=\sigma_j\sigma_i$ for $|i-j|\geq 2$, with $1\leq i,j \leq n-1$;
        \item [(VB2)] $\tau_i\tau_{i+1}\tau_i=\tau_{i+1}\tau_i\tau_{i+1}$ for all $1\leq i\leq n-2$; and $\tau_i\tau_j=\tau_j\tau_i$ for $|i-j|\geq 2$, with $1\leq i,j \leq n-1$; and $\tau_i^2=\id$ for all $1\leq i\leq n-1$;
        \item [(VB3)] $\tau_i\tau_{i+1}\sigma_i=\sigma_{i+1}\tau_i\tau_{i+1}$ for all $1\leq i\leq n-2$; and $\sigma_i\tau_j=\tau_j\sigma_i$ for $|i-j|\geq 2$, with $1\leq i,j \leq n-1$.
    \end{enumerate}
    
\noindent 
The classical generators of $\VB_n$ satisfy the same relations as the generators of the classical braid group $\B_n$, while the virtual generators mimic the behavior of the generators of the symmetric group $\Sn$. The family (VB3) of relations, called \textit{mixed relations}, involves interactions between the classical and virtual generators. \\
\\
\noindent
This concept naturally generalizes to all Artin groups. Recall that given a countable set $S$, a \textit{Coxeter matrix} is a symmetric, square matrix $\M=(m_{s,t})_{s,t\in S}$ indexed by $S$, where $m_{s,t}\in \mathbb{N}_{\geq 2}\cup \{\infty\}$ for all $s,t\in S$, and $m_{s,s}=1$ for all $s\in S$. This data is encoded by a labeled simplicial graph $\Gamma$ called \textit{Coxeter graph}. Its vertices $V(\Gamma)$ correspond to elements of $S$, and an edge between $s$ and $t$ exists if $m_{s,t}\geq 3$. By convention, such an edge is labeled by $m_{s,t}$ if $m_{s,t}\geq 4$. This data will allow us to define the groups that we study in this work.\medskip\\

\noindent The \textit{Artin group} $\A[\Gamma]$ is presented by generators $\mathcal{S}=\{\sigma_s\,|\,s\in S\}$ and relations of the form $\cdots \sigma_s\sigma_t\sigma_s=\cdots \sigma_t\sigma_s\sigma_t$, where both sides have $m_{s,t}$ letters if $m_{s,t}\neq \infty$. No relation is imposed if $m_{s,t}$ is $\infty$. Notably, when $m_{s,t}=2$, the Artin relation reduces to commutation relation.\\
\\
\noindent The \textit{Coxeter group} $\W[\Gamma]$ associated with the Coxeter graph $\Gamma$ is the group presented by generators $s\in S$, and relations of the form $(st)^{m_{s,t}}=\id$ for all $s\neq t$ with $m_{st}\neq \infty$, and $s^2=\id$ for all $s\in S$. The Coxeter group can be viewed as quotient of $\A[\Gamma]$ by the involution relations $\{\sigma_s^2=\id\}$ for all $s\in S$. When the Coxeter graph $\Gamma$ is connected, the groups $\W[\Gamma]$ and $\A[\Gamma]$ are called \textit{irreducible}. If $\Gamma$ decomposes into connected components $\Gamma_1,\ldots, \Gamma_k$, then $\W[\Gamma]=\W[\Gamma_1]\times \cdots \times \W[\Gamma_k]$, and $\A[\Gamma]=\A[\Gamma_1]\times \cdots\times \A[\Gamma_k]$. For this reason, the study of decomposability for Coxeter, Artin, or virtual Artin groups (that we will define in the following) focuses on the irreducible case, as reducible groups decompose naturally into directs products of their irreducible components.\\
\\
\noindent Many standard results on Coxeter and Artin groups are traditionally stated for groups with a finite set of generators $S$. Basic references for Coxeter groups include \cite{Hump} and \cite{Bourbaki}, while for Artin groups, one can consult the lecture notes or the survey paper by Paris (\cite{Par14Lect},\cite{Par14}).\\
\\
\noindent Often, even when working with finitely generated Artin or Coxeter groups, we will consider associated classes of Artin groups (which we denote by $\A[\hGamma]$) that are not necessarily finitely generated. For this reason, in this work we directly consider the most general case where 
$S$ is countable. Most of the known results naturally extend to this broader setting, and when exceptions arise, they will be explicitly noted."\\
\\
\noindent When $S$ is finite and $\W[\Gamma]$ is a finite group, we say that $\Gamma$, $\W[\Gamma]$ and $\A[\Gamma]$ are \textit{of spherical type}. Irreducible Coxeter graphs of spherical type have been completely classified by Coxeter (see \cite{Cox34},\cite{Cox35}), yielding a finite list of possibilities (see Figure \ref{figcoxsph}). \\
\\
\noindent A common example is when $\Gamma=A_{n-1}$ (see Figure \ref{figcoxsph}), where the associated Coxeter group is $\Sn$, and the corresponding Artin group is the braid group on $n$ strands $\mathcal{B}r_n$ .
\\
\\
\noindent To every Coxeter graph $\Gamma$, we can associate a group $\VA[\Gamma]$, which coincides with the virtual braid group on $n$ strands when $\Gamma=A_{n-1}$, as defined above.

\begin{defn*}[\cite{BellParThiel}]
  Let $\Gamma$ be a Coxeter graph with a countable set of vertices $S$. Let $\mathcal{S}=\{\sigma_s\,|\,s\in S\}$ and $\mathcal{T}=\{\tau_s\,|\, s\in S\}$ be two abstract sets in bijection with $S$. The \textit{virtual Artin group} $\VA[\Gamma]$ is defined by a presentation with generators $\mathcal{S}\cup \mathcal{T}$, subject to the following relations:
  \begin{itemize}
      \item [(VA1)] $\underbrace{\cdots \,\sigma_s \sigma_t\sigma_s}_{m_{s,t}\mbox{ \small{letters}}}\;=\;\underbrace{\cdots \,\sigma_t \sigma_s\sigma_t}_{m_{s,t}\mbox{ \small{letters}}}$, for all $s\neq t$, $s,t\in S$ and $m_{s,t}\neq \infty$;
      \item [(VA2)] $\underbrace{\cdots \,\tau_s \tau_t\tau_s}_{m_{s,t}\mbox{ \small{letters}}}\;=\;\underbrace{\cdots \,\tau_t \tau_s\tau_t}_{m_{s,t}\mbox{ \small{letters}}}$, for all $s\neq t$, $s,t\in S$ and $m_{s,t}\neq \infty$; and $\tau_s^2=\id$ for all $s\in S$;
      \item [(VA3)] $\underbrace{\cdots\; \tau_s \tau_t\tau_s}_{m_{s,t}-1\mbox{ \small{letters}}}\sigma_t\,=\,\sigma_r\underbrace{\cdots\; \tau_s \tau_t\tau_s}_{m_{s,t}-1\mbox{\small{ letters}}}$, where $r=s$ if $m_{s,t}$ is even and $r=t$ otherwise, for all $s\neq t$, $s,t\in S$ and $m_{s,t}\neq \infty$.
  \end{itemize}
  
\end{defn*}
\noindent As in the case of virtual braids, the first family of relations (VA1) ensures that the classical generators $\sigma_s$ satisfy the same relations as the generators of the Artin group $\A[\Gamma]$. Similarly, the relations in (VA2) state that the virtual generators $\tau_s$ satisfy the same relations as the generators of the Coxeter group $\W[\Gamma]$. The family (VA3) of mixed relations describes the action of the Coxeter group $\W[\Gamma]$ on its root system (for further details, see \cite{BellParThiel}).
\begin{rmk*}
    If $\Gamma$ is the dihedral Coxeter graph with two vertices $s,t$ and edge labeled by $m$, observe that $(\VA3)$ gives only one relation if $m$ is odd, while it gives two distinct relations if $m$ is even. Indeed, in the first case we have $(\cdots\; \tau_s \tau_t\tau_s)\,\sigma_t\,=\,\sigma_s\,(\cdots\; \tau_s \tau_t\tau_s)$, while in the second case we have $(\cdots\; \tau_s \tau_t\tau_s)\,\sigma_t\,=\,\sigma_t\,(\cdots\; \tau_s \tau_t\tau_s)$ and $(\cdots\; \tau_t\tau_s \tau_t)\,\sigma_s\,=\,\sigma_s\,(\cdots\; \tau_t\tau_s \tau_t)$.
\end{rmk*}

\noindent As with the Coxeter and Artin groups, if $\Gamma_1,\ldots ,\Gamma_k$ are the connected components of $\Gamma$, then $\VA[\Gamma]$ decomposes as $\VA[\Gamma_1]\times \cdots \times \VA[\Gamma_k]$. To study the decomposability of $\VA[\Gamma]$, we focus on its irreducible factors. \textbf{Unless otherwise specified, throughout this article, we will assume $\Gamma$ is connected, and consequently, that the Coxeter, Artin, and virtual Artin groups under consideration are irreducible.}\medskip \\
\noindent This study aims to investigate the decomposability of the virtual Artin group $\VA[\Gamma]$. The main theorem addressing this problem is as follows:

\begin{reptheorem}{vaindec} Let $\Gamma$ be a connected Coxeter graph. Then $\VA[\Gamma]$ is indecomposable. 
\end{reptheorem}

\noindent This theorem is proven in two main steps, distinguishing the case in which $Z(\W[\Gamma])=\{\id\}$ (Lemma \ref{vaindectrivcent}), and the case $Z(\W[\Gamma])\neq \{\id\}$ (Lemma \ref{vaindecnontrivcenter}). These results fully resolve the decomposability problem for irreducible virtual Artin groups. Furthermore, they yield significant insights into the structure of the automorphism group of $\VA[\Gamma]$, that we treat in Section \ref{autgroup}. The relevant theorems are as follows:
\begin{reptheorem}{RemakVA}
Let $\Gamma$ be a Coxeter graph, and let $\Gamma_1,\ldots , \Gamma_k$ be its connected components. Let \\$\VA[\Gamma]=H_1\times \cdots \times H_l$ be a decomposition of the virtual Artin group associated with $\Gamma$ into indecomposable and non-trivial factors. Then, $k=l$ and, up to permutation, $H_i=\VA[\Gamma_i]$ for all $i\in \{1,\ldots,k\}$.
\end{reptheorem}
\begin{reptheorem}{homoautvasn}
Let $\Gamma$ be a Coxeter graph, and let $\Gamma_1,\ldots,\Gamma_k$ be its connected components. Then there exists a homomorphism $\Psi:\Aut(\VA[\Gamma])\longrightarrow \mathfrak{S}_k$ such that, for every $\varphi\in \Aut(\VA[\Gamma])$ and every $i\in \{1,\ldots,k\}$, 
 \[
\varphi(\VA[\Gamma_i])= \VA[\Gamma_{\Psi(\varphi)(i)}].
 \]
\end{reptheorem}
\begin{repcorollary}{autfiniteindex}
Let $\Gamma$ be a Coxeter graph, and let $\Gamma_1,\ldots,\Gamma_k$ be its connected components. Then \\$\Aut(\VA[\Gamma_1])\times \cdots \times \Aut(\VA[\Gamma_k])$ is a finite index normal subgroup of $\Aut(\VA[\Gamma])$.
\end{repcorollary}
\noindent In particular, Corollary \ref{autfiniteindex} implies that to understand the automorphism group of $\VA[\Gamma]$, it suffices to study the automorphism groups of its irreducible components.
\\
\\
\noindent To establish the main results mentioned above, the article is organized as follows. In Section \ref{preliminaries} we summarize the main theorems in \cite{BellParThiel} concerning virtual Artin groups, and in particular we include the definitions of their important subgroups $\PVA[\Gamma]$ and $\KVA[\Gamma]$. The latter is notably an Artin group, relative to a Coxeter graph that the authors denote by $\hGamma$. We also give some preliminaries on the the \textit{colored Artin group} $\CA[\Gamma]$, which is the kernel of the homomorphism $\omega:\A[\Gamma
]\longrightarrow \W[\Gamma]$ mapping $\sigma_s\longmapsto s$ for all $s\in S$. This will be essential in Section \ref{inftyconnindec}.\\
\\
\noindent Then, we examine the decomposability of a certain class of Artin groups associated with some Coxeter graphs that we call \textit{$\infty$-connected} and that we introduce in Section \ref{inftyconnindec}. Intuitively, a Coxeter graph $\Gamma$ is $\infty$-connected if any two vertices can be joined via a path consisting of only $\infty$-labeled edges. We distinguish between the finite and infinite cases and show in Theorem \ref{SfinAindec} that, if $\Gamma$ is a finite $\infty$-connected Coxeter graph, then the Artin group $\A[\Gamma]$ is indecomposable.\\
\\
\noindent The argument to prove Theorem \ref{SfinAindec} can be adapted to the infinite case, relying on the role of the colored Artin group $\CA[\Gamma]$, which is shown to be indecomposable when $S$ is finite and $\Gamma$ is $\infty$-connected (Lemma \ref{SfinCAindecomposable}). 
Additionally, for finite $S$, the centralizer of $\CA[\Gamma]$ in $\A[\Gamma]$ is trivial (Lemma \ref{centerCA}). By extending a result from Godelle and Paris (\cite[Theorem 2.2]{GodPar12}), we prove the indecomposability of $\CA[\Gamma]$ in a specific infinite case (Lemma \ref{SinfCAindec}) and consequently for $\A[\Gamma]$ (Theorem \ref{SinfAindec}).\\
\\
\noindent In Section \ref{kvaindec}, we show that the graph $\hGamma$ (such that $\KVA[\Gamma]\cong \A[\hGamma]$) satisfies the $\infty$-connection property in the finite case, and if infinite, can be expressed as a direct limit of finite $\infty$-connected Coxeter graphs. Thus, $\KVA[\Gamma]=\A[\hGamma]$ always satisfies the hypotheses of Theorem \ref{SinfAindec}, and is therefore indecomposable. \\
\\
\noindent In Section \ref{sectioncentralizerofKVA}, we show that the centralizer of $\KVA[\Gamma]$ in $\VA[\Gamma]$ is trivial. This result is a crucial link between the decomposability of $\KVA[\Gamma]$ and that of the entire group $\VA[\Gamma]$.\\
\\
\noindent In Section \ref{indecva}, we address the indecomposability of $\VA[\Gamma]$. In Subsection \ref{trivialcentercase}, we analyze the case in which $Z(\W[\Gamma])$ is trivial. In Corollary \ref{trivialcenterthenindecompo} we summarize results by Nuida and Paris (\cite{Par07},\cite{Nui06}) showing that the assumption $Z(\W[\Gamma])=\{\id\}$ implies the indecomposability of the Coxeter group $\W[\Gamma]$. This allows to deduce the same property for $\VA[\Gamma]$ in Lemma \ref{vaindectrivcent}. In Subsection \ref{nontrivialcentercase}, we study the case  $Z(\W[\Gamma])\neq\{\id\}$. Here, $\W[\Gamma]$ may admit a decomposition involving its center as a factor. Nevertheless, through some preliminary results on the longest element $w_0$ acting on the root system of $\W[\Gamma]$, we demonstrate that $\VA[\Gamma]$ remains indecomposable.\\
A straightforward consequence of Theorem \ref{vaindec} is the following:

\begin{cor*}
    The virtual braid group on $n$ strands $\VB_n$ is indecomposable.
\end{cor*}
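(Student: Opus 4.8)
The corollary is an immediate specialization of the main theorem, so the plan is short. First I would recall that, by the very definition of $\VA[\Gamma]$ recalled in the Introduction, taking $\Gamma = A_{n-1}$ produces the group with generators $\mathcal{S} = \{\sigma_1,\dots,\sigma_{n-1}\}$ and $\mathcal{T} = \{\tau_1,\dots,\tau_{n-1}\}$ subject to (VA1)--(VA3), and that for the graph $A_{n-1}$ these relations are precisely (VB1)--(VB3): the braid relations among the $\sigma_i$, the symmetric-group relations among the $\tau_i$ (including $\tau_i^2=\id$), and the mixed relations (VA3) specialising to $\tau_i\tau_{i+1}\sigma_i=\sigma_{i+1}\tau_i\tau_{i+1}$ together with $\sigma_i\tau_j=\tau_j\sigma_i$ for $|i-j|\geq 2$. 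Hence $\VA[A_{n-1}] \cong \VB_n$. Second, I would observe that for $n \geq 2$ the Coxeter graph $A_{n-1}$ is the path on the $n-1$ vertices $s_1,\dots,s_{n-1}$, which is connected. Applying Theorem \ref{vaindec} to $\Gamma = A_{n-1}$ then yields at once that $\VB_n$ is indecomposable.

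The only bookkeeping concerns the degenerate cases. For $n = 1$ the group $\VB_1$ is trivial, and since the trivial group has no proper subgroup it cannot be written as a direct product of two proper subgroups, so it is indecomposable by convention. For $n = 2$ the graph $A_1$ is a single vertex, which is connected, so Theorem \ref{vaindec} still applies (concretely $\VB_2 \cong \mathbb{Z} \ast (\mathbb{Z}/2\mathbb{Z})$, a nontrivial free product, which is indeed indecomposable). There is no genuine obstacle to overcome here: all the content lies in Theorem \ref{vaindec}, and the corollary simply records its application to the braid-type Coxeter graph $A_{n-1}$.
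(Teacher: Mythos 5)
Your proposal is correct and matches the paper's reasoning exactly: the corollary is obtained by identifying $\VB_n$ with $\VA[A_{n-1}]$, noting that the Coxeter graph $A_{n-1}$ is connected, and applying Theorem \ref{vaindec}. Your additional remarks on the degenerate cases $n=1,2$ are harmless and consistent with the paper's conventions.
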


\noindent Finally, in Section \ref{autgroup}, we examine the implications of the indecomposability result for the automorphism group of $\VA[\Gamma]$. Apart from the case of virtual braid groups $\VB_n$ with $n\geq 5$, as studied in \cite{BellPar20}, the automorphism group of virtual Artin groups remains unexplored.

\begin{acknowledgements}\noindent
The author thanks her Ph.D. advisor, Prof. Luis Paris, for his valuable insights and guidance throughout their many discussions. She is also grateful to the anonymous referee of \textit{Journal of Algebra}
for their helpful comments. In addition, she wishes to thank the rapporteurs
of her Ph.D. thesis (of which this article forms a part), Professors Thomas Gobet and Eddy
Godelle, for their careful reading and insightful suggestions.
\end{acknowledgements}

\section{Preliminaries}\label{preliminaries}

\noindent 
In this section we quickly summarize the main prerequisites and results which will be needed in the following analysis. 
\subsection{Virtual Artin groups}\label{subs-virtart}
All relevant information regarding virtual Artin groups can be found in  \cite{BellParThiel}. \medskip\\
\noindent Notably, we emphasize that the Coxeter graphs considered in our analysis have a vertex set $S$ that is not necessarily finite. Let $\mathcal{P}_{\mathrm{fin}}(S)$ denote the set $\{X\subset S\,|\,|X|<\infty\}$. If $X,Y\in \mathcal{P}_{\mathrm{fin}}(S)$ are such that $X\subset Y$, then, by Van der Lek \cite[Lemma 4.11]{van1983homotopy} and Bourbaki \cite[Corollaire 1, Page 19]{Bourbaki} the natural homomorphisms between the respective Artin groups and Coxeter groups 
\begin{align*}
    \A[\Gamma_X]\longrightarrow\A[\Gamma_Y], \qquad\text{and}\qquad \W[\Gamma_X]\longrightarrow\W[\Gamma_Y],
\end{align*}
induced by inclusion are injective. It follows that when $S$ is infinite, the Artin group $\A[\Gamma]$ and the Coxeter group $\W[\Gamma]$ can be expressed as the following direct limits:
\begin{align*}
   \A[\Gamma]\;\;= \underset{X\in \mathcal{P}_{\mathrm{fin}}(S)}{\varinjlim}\A[\Gamma_X]; \qquad \text{ and }\qquad \W[\Gamma]\;\;= \underset{X\in \mathcal{P}_{\mathrm{fin}}(S)}{\varinjlim}\W[\Gamma_X].
\end{align*}

\noindent We saw that the virtual Artin group $\VA[\Gamma]$ associated with a Coxeter graph $\Gamma$ has two distinct sets of generators. The set of classical generators $\mathcal{S}=\{\sigma_s\,|\,s\in S\}$ must satisfy the same relations as the generators of the Artin group $\A[\Gamma]$, while the virtual generators $\mathcal{T}=\{\tau_s\,|\,s\in S\}$ satisfy the same relations as the generators of the Coxeter group $\W[\Gamma]$. A direct consequence of this structure is the existence of two natural group homomorphisms: \begin{align*}
    \iota_{\A}: \A[\Gamma]&\longrightarrow \VA[\Gamma],  &\iota_{\W}:\;\W[\Gamma]&\longrightarrow \VA[\Gamma],\\
    \sigma_s&\longmapsto \sigma_s,  &s &\longmapsto \tau_s.
\end{align*}
In \cite{BellParThiel}, it is shown that these homomorphisms are injective. Consequently, we can view both the Artin group $\A[\Gamma]$ and the Coxeter group $\W[\Gamma]$ as subgroups of the virtual Artin group $\VA[\Gamma]$.
\medskip\\
\noindent
There are two additional group homomorphisms already known for virtual braids: $\pi_K: \VA[\Gamma]\longrightarrow \W[\Gamma]$, which maps $\tau_s$ to $s$ and $\sigma_s$ to $\id$ for all $s\in S$; and $\pi_P: \VA[\Gamma]\longrightarrow \W[\Gamma]$, which maps both $\sigma_s$ and $\tau_s$ to the generator $s$, for all $s$ in $S$.\medskip\\
\noindent It straightforward to verify that the homomorphism $\iota_{\W}$ is a section of both $\pi_K$ and $\pi_P$. Defining the kernels as $\KVA[\Gamma]:=\ker(\pi_K)$ and $\PVA[\Gamma]:=\ker(\pi_P)$, we can deduce that the virtual Artin group $\VA[\Gamma]$ has the following semidirect product structures:
\[\VA[\Gamma]=\KVA[\Gamma]\rtimes \W[\Gamma], \qquad\qquad \VA[\Gamma]=\PVA[\Gamma]\rtimes \W[\Gamma].\]
\begin{nt}\label{notationactionWonKVA}
    Let $\Gamma$ be a Coxeter graph, and let $G$ be either $\PVA[\Gamma]$ or $\KVA[\Gamma]$. For any $w\in \W[\Gamma]$ and $g\in G$, we write $w(g)$ to denote the action of $w$ on $g$ by conjugation, specifically $w(g)=\iota_{\W}(w)\,g\,\iota_{\W}(w)^{-1}$. Sometimes, with slight abuse of notation, we write directly $w$ to mean its image $\iota_{\W}(w)$ in $\VA[\Gamma]$.
\end{nt}
\noindent When $\Gamma=A_{n-1}$, the groups $\PVA[\Gamma]$ and $\KVA[\Gamma]$ were already known as the \textit{pure virtual braid group on $n$ strands}, and the \textit{kure virtual braid group on $n$ strands}, respectively. Group presentations for these were given in \cite{Bard04}, \cite{Rabenda} and \cite{BardBell}.\medskip
\\
\noindent In \cite{BellParThiel}, the authors provide a general group presentation of $\PVA[\Gamma]$ and $\KVA[\Gamma]$, for any Coxeter graph $\Gamma$. Notably, $\KVA[\Gamma]$ is always an Artin group associated with a Coxeter graph $\hGamma$, constructed from $\Gamma$. Both $\PVA[\Gamma]$ and $\KVA[\Gamma]$ have their generators in bijection with the root system of the Coxeter group $\W[\Gamma]$.\medskip
\\
\noindent To clarify, given a Coxeter graph $\Gamma$ on a countable set $S$, we define a formal vector space $V:=\bigoplus_{s\in S} \mathbb{R}\cdot \alpha_s$, where $\Pi=\{\alpha_s\,|\, s\in S\}$ is a set of vectors in bijection with $S$. We define a bilinear symmetric form $\langle\cdot,\cdot\rangle:V\times V\longrightarrow \mathbb{R}$ extending by bilinearity the following rule:
\[
\langle\alpha_s,\alpha_t\rangle:=\begin{cases}
    -\cos{\left(\frac{\pi}{m_{s,t}}\right)}\quad &\mbox{if $m_{s,t}\neq \infty$;}\\
    -1 \quad &\mbox{otherwise.}
\end{cases}
\]
\noindent The \textit{canonical linear representation} of $\W[\Gamma]$ is a group homomorphism $\rho:\W[\Gamma]\longrightarrow GL(V)$, which sends the generator $s$ to the linear transformation defined by: $v\longmapsto v-2\langle v,\alpha_s\rangle\alpha_s $. This representation is known to be faithful (see for instance \cite{Hump}). From now on, the image of a vector $v\in V$ under the action of $w\in \W[\Gamma]$ through $\rho$ will be denoted by $w(v)$. The set $\Phi[\Gamma]=\{w(\alpha_s)\,|\, s\in S;\,w\in \W[\Gamma]\}$ is called the \textit{root system} of $\W[\Gamma]$. This set has been extensively studied (see \cite{deodh82}), and it is worth noting that $\Phi[\Gamma]$ is finite if and only if $\Gamma$ is of spherical type. Irreducible spherical type Coxeter graphs are illustrated in Figure \ref{figcoxsph}.
\begin{figure}
    \centering \includegraphics[width=15cm]{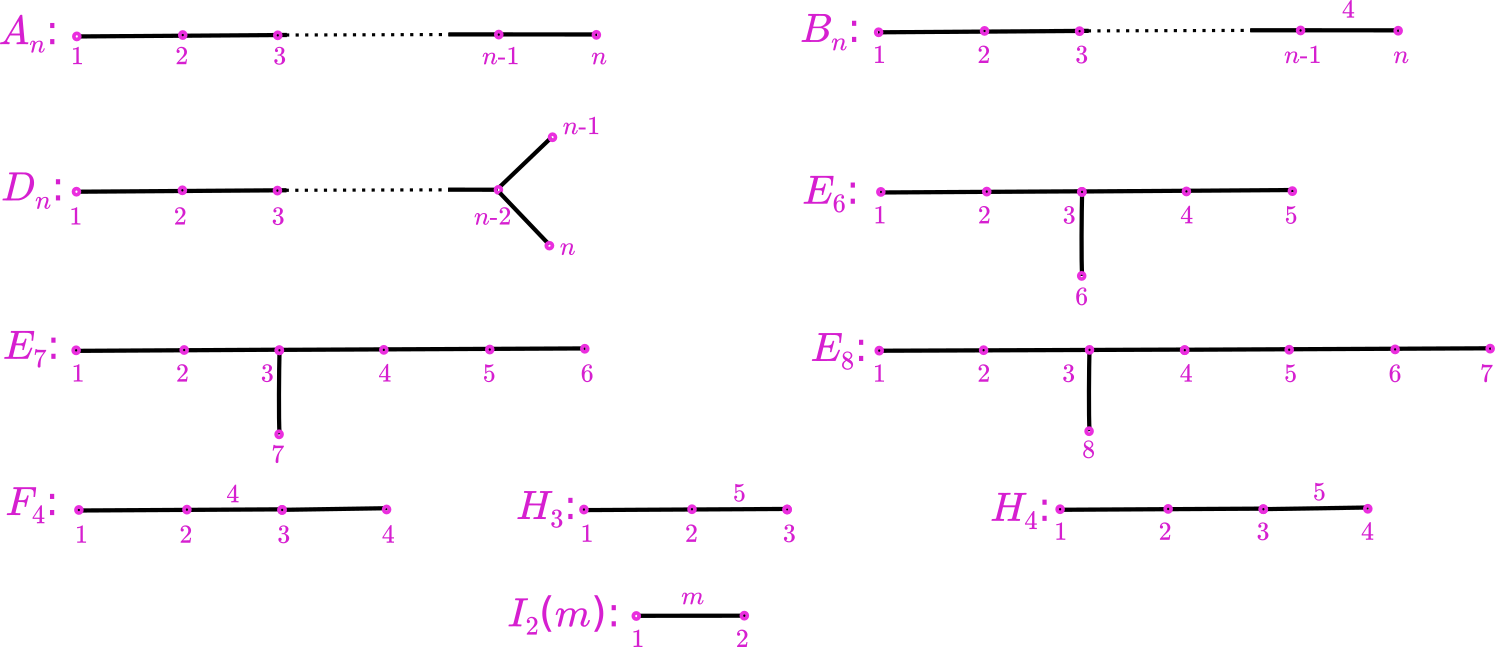}
    \caption{Connected Coxeter graphs of spherical type.}
    \label{figcoxsph}
\end{figure}
\noindent A key result characterizing these groups is that the bilinear symmetric form $\langle\cdot ,\cdot\rangle$ is a scalar product if and only if $\Gamma$ is of spherical type.
\medskip\\
\noindent 
Now, let $\Gamma$ be any Coxeter graph with $\Phi[\Gamma]$ as its associated root system. As described in Notation \ref{notationactionWonKVA},  $w\in \W[\Gamma]$ acts on $\PVA[\Gamma]$ and $\KVA[\Gamma]$. Consider a root $\beta$ in $\Phi[\Gamma]$, which can be expressed as $\beta=w(\alpha_s)$, for some $s\in S$ and $w\in \W[\Gamma]$. Define $\delta_{\beta}:=w\, (\sigma_s)$ and $\zeta_{\beta}=w\,(\tau_s\sigma_s)$, and observe that $\delta_{\beta}\in \KVA[\Gamma]$ and $\zeta_{\beta}\in \PVA[\Gamma]$. By \cite{BellPar20}, the following properties hold:
\begin{itemize}
    \item The definitions of $\delta_{\beta}$ and $\zeta_{\beta}$ are independent of the specific choice of $w\in \W[\Gamma]$ and $s\in S$ \cite[Lemma 2.2]{BellParThiel};
    \item The set $\{\delta_{\beta}\,|\, \beta\in \Phi[\Gamma]\}$ generates $\KVA[\Gamma]$ \cite[Theorem 2.3]{BellParThiel}; 
    \item The set $\{\zeta_{\beta}\;|\;\beta\in \Phi[\Gamma]\}$ generates $\PVA[\Gamma]$ \cite[Theorem 2.6]{BellParThiel};
    \item The action of $\W[\Gamma]$ on the generating sets of $\PVA[\Gamma]$ and $\KVA[\Gamma]$ mirrors its action on the root system $\Phi[\Gamma]$. Specifically, for all $\beta\in \Phi[\Gamma]$ and $w\in \W[\Gamma]$, 
    \[
    w(\delta_{\beta})=\delta_{w(\beta)}, \qquad \qquad \qquad w(\zeta_{\beta})=\zeta_{w(\beta)}.
    \]
    \noindent Remark that $w(g_{\beta})$ for $g$ either $\delta$ or $\zeta$ denotes the action of the Coxeter group $\W[\Gamma]$ on the kernel $G\in \{\PVA[\Gamma],\KVA[\Gamma]\}$ by conjugation, while $w(\beta)$ for $\beta\in \Phi[\Gamma]$ denotes the action of $\W[\Gamma]$ on its root system by the canonical linear representation. Since by \cite{BellPar20} we know that these two actions coincide, we denote them here with the same notation.
\end{itemize}

\noindent In particular, \cite[Theorems 2.3 and 2.6]{BellParThiel} provide explicit group presentations for $\KVA[\Gamma]$ and $\PVA[\Gamma]$, respectively. Building on the Coxeter graph $\Gamma$, Bellingeri, Paris and Thiel  introduced another Coxeter graph $\hGamma$, defined as follows:
\begin{itemize}
    \item The vertex set of $\hGamma$ is $V(\hGamma)=\Phi[\Gamma]$.
    \item The Coxeter matrix $\widehat{\mathrm{M}}=(\hm_{\beta,\gamma})_{\beta,\gamma\in \Phi[\Gamma]}$ has entries $\hm_{\beta,\beta}=1$ for all $\beta\in \Phi[\Gamma]$, and $\hm_{\beta,\gamma}=m_{s,t}$ if there exists $w\in \W[\Gamma]$ and $s,t\in S$ such that $\beta=w(\alpha_s)$ and $\gamma=w(\alpha_t)$; and $\hm_{\beta,\gamma}=\infty$ otherwise. 
\end{itemize}
\noindent Sometimes we will call $\hGamma$ the \textit{root graph} associated with $\Gamma$. Theorem 2.3 in \cite{BellParThiel} demonstrates that the kernel $\KVA[
\Gamma]$ is isomorphic to the Artin group $\A[\hGamma]$. Notably, since the vertices of $\hGamma$ are in bijection with the roots in $\Phi[\Gamma]$, the Artin group $\A[\hGamma]=\KVA[\Gamma]$ is a finitely generated Artin group if and only if $\Gamma$ is of spherical type.\\

\subsection{Parabolic subgroups and colored Artin groups}\label{parabolic}

A fundamental concept in the theory of Coxeter and Artin groups theory is that of parabolic subgroups.
Let $\Gamma$ be a Coxeter graph with vertex set $S$. For any subset $X$ of $S$, let $\Gamma_X$ denote the full subgraph of $\Gamma$ spanned by $X$. A classical result shows that $\W[\Gamma_X]$ coincides with the subgroup $\langle X\rangle$ of $\W[\Gamma]$ generated by $X$. Similarly, Van der Lek \cite{van1983homotopy} showed that (for finitely generated Artin groups), $\A[\Gamma_X]$ coincides with $\langle X\rangle$ in $\A[\Gamma]$. These subgroups, called \textit{standard parabolic subgroups}, are denoted by $\W_X[\Gamma]$ and $\A_X[\Gamma]$, respectively. A \textit{parabolic subgroup} of $\W[\Gamma]$ or of $\A[\Gamma]$ is a conjugate of a standard parabolic subgroup. \medskip
\\
\noindent For $w \in \W[\Gamma]$, the word length of $w$ with respect
to the generating set $S$ is denoted by $l_S(w)$. If an expression of $w$ has length $l_S(w)$, it is called a \textit{reduced expression}. Reduced expressions are generally not unique. The set of generators $\{s_1,\ldots,s_n\}\subseteq S$ appearing in a reduced expression of an element $w=s_1\cdots s_n$ is called the \textit{support of $w$} and it is denoted by $supp(w)$. It can be shown that the support of an element $w$ does not depend on the choice of the reduced expression for $w$. \medskip \\
\noindent The following result characterizes a very special element that arises in spherical type Coxeter groups.
\begin{prop}\cite[Exercices - Chapitre 4]{Bourbaki}\label{proplongestelement} Let $\Gamma$ be a finite connected Coxeter graph with set of vertices $S$. Then $\W[\Gamma]$ is of spherical type if and only if there exists an element $w_0\in \W[\Gamma]$ such that for all $s\in S$, $l_S(w_0s)<l_S(w_0)$.
\end{prop}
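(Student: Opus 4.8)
\section*{Proof proposal for Proposition \ref{proplongestelement}}

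The plan is to prove the two implications separately. For the ``only if'' direction, suppose $\W[\Gamma]$ is of spherical type; as $S$ is finite this means $\W[\Gamma]$ is a finite group, so the word length $l_S$ attains a maximum on $\W[\Gamma]$. Let $w_0$ be an element realizing this maximum. For every $s\in S$ we have $l_S(w_0 s)\le l_S(w_0)$ by maximality, while $l_S(w_0 s)\neq l_S(w_0)$: indeed the homomorphism $\W[\Gamma]\to\{\pm 1\}$ sending each generator to $-1$ shows that any two expressions of a given element have the same length modulo $2$, so $l_S(w_0 s)\equiv l_S(w_0)+1\pmod 2$. Hence $l_S(w_0 s)<l_S(w_0)$ for all $s$. (Connectedness of $\Gamma$ is not needed for this direction.)

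For the ``if'' direction, assume there is $w_0\in\W[\Gamma]$ with $l_S(w_0 s)<l_S(w_0)$ for all $s\in S$; we must show $\W[\Gamma]$ is finite. I will use the canonical linear representation $\rho\colon\W[\Gamma]\to GL(V)$, the root system $\Phi[\Gamma]$ (the $\W[\Gamma]$-orbit of the set $\Pi$ of simple roots), and two standard facts about Coxeter groups (see \cite{Hump} or \cite{Bourbaki}): writing $\Phi^+[\Gamma]$ for the roots that are nonnegative linear combinations of the $\alpha_s$, one has $\Phi[\Gamma]=\Phi^+[\Gamma]\sqcup(-\Phi^+[\Gamma])$; and, for $w\in\W[\Gamma]$ and $s\in S$, $l_S(ws)<l_S(w)$ if and only if $w(\alpha_s)\in-\Phi^+[\Gamma]$. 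Applying the latter to $w_0$, every simple root $\alpha_s$ satisfies $w_0(\alpha_s)\in-\Phi^+[\Gamma]$, i.e.\ $w_0(\alpha_s)$ is a nonpositive combination of the $\alpha_t$. Since $\rho(w_0)$ is linear, it sends every nonnegative combination of simple roots to a nonpositive one; in particular $w_0(\beta)$ is a nonpositive combination of simple roots for each $\beta\in\Phi^+[\Gamma]$, and, being a root, it must lie in $-\Phi^+[\Gamma]$. Thus $w_0(\Phi^+[\Gamma])\subseteq-\Phi^+[\Gamma]$. As the number of positive roots that a given element sends to negative roots equals the length of that element (another standard fact), $\Phi^+[\Gamma]$ is finite, hence so is $\Phi[\Gamma]$.

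It remains to deduce that $\W[\Gamma]$ itself is finite. The group $\W[\Gamma]$ permutes the finite set $\Phi[\Gamma]$, and the resulting action is faithful: an element of its kernel fixes $\Pi$ pointwise, hence acts trivially on $V$ since $\Pi$ spans $V$, hence is trivial because $\rho$ is faithful. So $\W[\Gamma]$ embeds into the finite group $\mathrm{Sym}(\Phi[\Gamma])$ and is finite; as $S$ is finite, $\W[\Gamma]$ is of spherical type, which completes the proof.

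I expect the only step with real content to be the passage, in the ``if'' direction, from ``$w_0$ is negative on every simple root'' to ``$w_0$ is negative on every positive root'', which is precisely where the dichotomy that every root of a Coxeter group is positive or negative is used. (Alternatively one could avoid the geometric representation and argue combinatorially that the hypothesis forces $l_S(w_0 v)=l_S(w_0)-l_S(v)$ for all $v$, via the exchange condition, so that $l_S$ is bounded on $\W[\Gamma]$ and $\W[\Gamma]$ is finite; but the root-system argument is the shortest given the machinery already introduced in the paper.) Everything else is either the elementary maximal-length observation or a direct appeal to standard Coxeter theory, matching the reference \cite{Bourbaki} attached to the statement.
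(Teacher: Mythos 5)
Your proof is correct. The paper does not actually prove this proposition---it is quoted directly from Bourbaki's exercises---so there is no in-text argument to compare against; your write-up supplies the standard proof that the citation points to. Both directions are sound: the sign homomorphism $\W[\Gamma]\to\{\pm 1\}$ correctly rules out $l_S(w_0s)=l_S(w_0)$ in the ``only if'' direction, and in the ``if'' direction the chain ``$w_0$ negative on all simple roots $\Rightarrow$ $w_0(\Phi^+[\Gamma])\subseteq -\Phi^+[\Gamma]$ (by linearity plus the positive/negative dichotomy) $\Rightarrow$ $|\Phi^+[\Gamma]|=l_S(w_0)<\infty$ $\Rightarrow$ $\W[\Gamma]$ embeds in $\mathrm{Sym}(\Phi[\Gamma])$'' is exactly the classical argument, with the faithfulness of the action on $\Phi[\Gamma]$ correctly reduced to the faithfulness of the canonical representation. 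Your observation that connectedness of $\Gamma$ is not needed is also accurate; it is carried along only because the paper applies the statement to irreducible components.
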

\noindent Such an element $w_0$, when it exists, is called the \textit{longest element} of the Coxeter group $\W[\Gamma]$. It can also be shown that all the generators in $S$ must appear at least once in any expression of the longest element.\\
 Now, let us consider the cosets and double cosets with respect to the standard parabolic subgroups of $\W[\Gamma]$.
\begin{defn}
    Let $\Gamma$ be a Coxeter graph with set of vertices $S$, and let $X,Y\subset S$. We say that an element $w\in \W[\Gamma]$ is \textit{$(Y,X)$-minimal} if it is of minimal length in the double coset $\W[\Gamma_Y]\; w \;\W[\Gamma_{X}]=\{uwv\;|\; u\in \W[\Gamma_Y],v\in \W[\Gamma_X]\}$.
\end{defn}
\begin{lem}\cite[Exercices - Chapitre 4]{Bourbaki}
Let $\Gamma$ be a Coxeter graph with set of vertices $S$, and let $\W[\Gamma]$ be the associated Coxeter group. Take $w\in \W[\Gamma]$ and $X,Y\subset S$. Then there exists a unique $m_0\in \W[\Gamma]$ that is $(Y,X)-$minimal in the double coset $\W[\Gamma_Y]\; w \;\W[\Gamma_{X}]$. Moreover, there exist $u\in \W[\Gamma_Y]$ and $v\in \W[\Gamma_X]$ such that $w=um_0v$ and $l_S(w)=l_S(u)+l_S(m_0)+l_S(v)$.
\end{lem}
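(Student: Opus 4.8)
The plan is to bootstrap from the one‑sided (single coset) version of the statement, which is standard and follows from the Exchange Condition (see \cite{Bourbaki} or \cite{Hump}): for any $Z\subseteq S$ and any $g\in\W[\Gamma]$, the coset $g\,\W[\Gamma_Z]$ has a unique element $g^{Z}$ of minimal length, one has $g=g^{Z}v$ with $v\in\W[\Gamma_Z]$ and $l_S(g)=l_S(g^{Z})+l_S(v)$, and in fact $l_S(g^{Z}v')=l_S(g^{Z})+l_S(v')$ for every $v'\in\W[\Gamma_Z]$; the symmetric statement holds for right cosets $\W[\Gamma_Z]\,g$.

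First I would settle existence. The set of lengths $\{\,l_S(g)\mid g\in \W[\Gamma_Y]\,w\,\W[\Gamma_X]\,\}$ is a non‑empty subset of $\mathbb{N}$, hence has a least element; choose $m_0$ in the double coset realizing it. Since $\W[\Gamma_Y]\,m_0$ and $m_0\,\W[\Gamma_X]$ are both contained in the double coset, $m_0$ is of minimal length in its right $\W[\Gamma_Y]$‑coset and in its left $\W[\Gamma_X]$‑coset; applying the one‑sided fact on both sides gives $l_S(sm_0)>l_S(m_0)$ for all $s\in Y$, $l_S(m_0t)>l_S(m_0)$ for all $t\in X$, and moreover $l_S(u'm_0)=l_S(u')+l_S(m_0)$ and $l_S(m_0v')=l_S(m_0)+l_S(v')$ for all $u'\in\W[\Gamma_Y]$, $v'\in\W[\Gamma_X]$.

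Next, the additive decomposition. Since $m_0$ lies in $\W[\Gamma_Y]\,w\,\W[\Gamma_X]$, the two double cosets coincide, so $w=u\,m_0\,v$ for some $u\in\W[\Gamma_Y]$, $v\in\W[\Gamma_X]$; among all such representations fix one minimizing $l_S(u)+l_S(v)$. I claim $l_S(w)=l_S(u)+l_S(m_0)+l_S(v)$. Otherwise, concatenating fixed reduced expressions of $u$, $m_0$ and $v$ yields a non‑reduced word for $w$, so by the Deletion Condition two of its letters can be erased while still representing $w$; I split into cases according to how many of the two erased letters lie in the middle ($m_0$‑)block. If both do, then $m_0$ equals a product of $l_S(m_0)-2$ generators, which is absurd. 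If exactly one does, let $m_0'$ be $m_0$ with that letter removed from its reduced expression; absorbing the two outer blocks (one of which may itself have lost a letter) into suitable elements of $\W[\Gamma_Y]$ and $\W[\Gamma_X]$ shows $m_0'\in\W[\Gamma_Y]\,m_0\,\W[\Gamma_X]$ with $l_S(m_0')<l_S(m_0)$, contradicting the minimality of $l_S(m_0)$ in the double coset. If neither does, the two erased letters lie in the outer blocks: if both are in the $u$‑block (resp.\ the $v$‑block) then $u$ (resp.\ $v$) equals a product of too few generators, absurd; and if one is in each, we obtain a representation $w=u'\,m_0\,v'$ with $u'\in\W[\Gamma_Y]$, $v'\in\W[\Gamma_X]$ and $l_S(u')+l_S(v')\le l_S(u)+l_S(v)-2$, contradicting minimality. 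Hence $l_S(w)=l_S(u)+l_S(m_0)+l_S(v)$, which is exactly the ``moreover'' assertion.

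Uniqueness is then immediate: if $m_0'$ is any minimal‑length element of the double coset, applying the previous step to $w=m_0'$ gives $m_0'=u\,m_0\,v$ with $l_S(m_0')=l_S(u)+l_S(m_0)+l_S(v)$; since $l_S(m_0')=l_S(m_0)$ by minimality, $l_S(u)=l_S(v)=0$, so $u=v=\id$ and $m_0'=m_0$. The only genuinely delicate point is the Deletion‑Condition case analysis in the third paragraph — this is essentially Deodhar's lemma on minimal double‑coset representatives — while everything else is bookkeeping on top of the standard one‑sided coset lemma.
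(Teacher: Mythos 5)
Your argument is correct. Note that the paper does not actually prove this lemma at all: it is quoted verbatim from the exercises of Chapter~4 of Bourbaki (see also Humphreys or Deodhar for the same statement), so there is no in-paper proof to compare against. What you have written is the standard derivation: existence of $m_0$ by picking a length-minimal element of the double coset, the one-sided coset lemma to control $\W[\Gamma_Y]m_0$ and $m_0\W[\Gamma_X]$, and then the Deletion Condition applied to a concatenation $u\,m_0\,v$ chosen with $l_S(u)+l_S(v)$ minimal, with the exhaustive case analysis on where the two deleted letters fall; each case contradicts either reducedness of a block, minimality of $l_S(m_0)$ in the double coset, or minimality of $l_S(u)+l_S(v)$. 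The uniqueness step (apply the additivity to $w=m_0'$ and conclude $u=v=\id$) is also the standard one and is sound. No gaps.
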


\noindent These notions will be useful in Sections \ref{inftyconnindec} and \ref{sectioncentralizerofKVA}. We now introduce another class of subgroups of Artin groups that will be essential for obtaining the indecomposability results.
\begin{defn}
    Let $\Gamma$ be a Coxeter graph on a set of vertices $S$, and let $\omega$ be the group homomorphism $\omega:\A[\Gamma]\longrightarrow \W[\Gamma]$ that sends the generator $\sigma_s\longmapsto s$ for each $s\in S$. The kernel of this homomorphism, denoted by $\CA[\Gamma]$, is called the \textit{colored Artin group} of $\Gamma$.
\end{defn}

\noindent Colored Artin groups are sometimes referred to as \textit{pure Artin groups}. Indeed, for the braid case $\B_n$, the homomorphism $\omega$ becomes the map that sends each braid to its associated permutation.
\begin{align*}
    \omega_{\B}\;:\;\B_n &\xrightarrow[\quad]{}\;\Sn\\
    \sigma_i &\longmapsto\; s_i.
\end{align*}
\noindent This map sends each classical generator $\sigma_i$ of the braid group $\B_n$ to the simple transposition $s_i=(i,\; i+1)$ of $\Sn$, for all $i\in \{1,\ldots,n-2\}$. The kernel $\ker(\omega_{\B})=\mathcal{PB}r
_n$ is called the \textit{pure} or \textit{colored braid group on $n$ strands}.\medskip\\
\noindent The surjection $\omega$
has a natural set-section $\varsigma : \W[\Gamma] \longrightarrow \A[\Gamma
]$. Take an element $w\in \W[\Gamma]$, and let $w =s_{i_1}\cdots s_{i_p}$ be an expression of $w$ such that $p=l_S(w)$ and $s_{i_j}\in S$ for all $j\in \{1,\ldots,p\}$. Namely, this expression is reduced. Define $\varsigma$ as $\varsigma(w):=\sigma_{s_{i_1}}\cdots \;\sigma_{s_{i_p}}$. It is clear that $\omega \circ \varsigma=\id_{\W[\Gamma]}$. We immediately observe that $\varsigma$ is not a group homomorphism because, for all $s\in S$, we have $\varsigma(s^2)=\varsigma(\id)=\id$ and $\varsigma(s)^2=\sigma_s^2\neq \id$.\medskip \\
\noindent
Indeed, the generators are involutions in the Coxeter group, but not in the Artin group. Nevertheless, this map is a well-defined set section of $\omega$, since by \cite{Tits1969LePD} its definition does not depend on the choice of the reduced expression for $w$. Moreover, even though $\varsigma$ is not a homomorphism, if $u , v \in \W[\Gamma]$ are such that $l_S(uv) = l_S(u) + l_S(v)$, then $\varsigma(uv)=\varsigma(u)\varsigma(v)$. \medskip \\
\noindent In this work, we will use a fundamental theorem by Godelle and Paris. Let $\Gamma$ be a finite Coxeter graph on a set of vertices $S$. Salvetti (see \cite{Sal87},\cite{Sal94}) constructed a simplicial complex $Sal(\Gamma)$ which has the same homotopy type as that of the complexified complement of the reflection arrangement of $\W[\Gamma]$. Specifically, $\pi_1(Sal(\Gamma))=\CA[\Gamma]$. This complexified complement of the reflection arrangement of $\W[\Gamma]$, is conjectured to be aspherical, giving rise to the famous \textbf{K$(\pi,1)$-conjecture} for (finitely generated) Artin groups. For a subset $X\subset S$, recall that we denote by $\Gamma_X$ the full subgraph of $\Gamma$ spanned by the vertices in $X$. The complex $Sal(\Gamma_X)$ naturally embeds in $Sal(\Gamma)$ with an embedding that is equivariant under the action of $\W_{X}=\W[\Gamma_X]$.

\begin{thm}[Theorem 2.2 in \cite{GodPar12}]\label{retraction}
    Let $\Gamma$ be a finite Coxeter graph on $S$, and let $X\subset S$. Then the natural embedding $i:Sal(\Gamma_X)\longrightarrow Sal(\Gamma)$ admits a retraction $p_X:Sal(\Gamma)\longrightarrow Sal(\Gamma_X)$ that is equivariant under the action of $\W_X$.
\end{thm}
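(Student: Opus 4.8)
The plan is to realise $p_X$ explicitly on a combinatorial model of $Sal(\Gamma)$. Recall that $Sal(\Gamma)$ is a regular CW complex whose cells are the pairs $\sigma=(w\,\W_Y,Y)$, where $w\in\W[\Gamma]$, $Y\subseteq S$ with $\W_Y:=\W[\Gamma_Y]$ finite, and $w\,\W_Y$ is the corresponding coset; the cell $\sigma$ has dimension $|Y|$, it is a Coxeter cell of type $\W_Y$, and its faces are precisely the cells $(u\,\W_Z,Z)$ with $Z\subseteq Y$ and $u\,\W_Y=w\,\W_Y$. The group $\W[\Gamma]$ acts on $Sal(\Gamma)$ by left translation on the first coordinate, and in this model the embedding $i$ identifies $Sal(\Gamma_X)$ with the subcomplex of those cells $(w\,\W_Y,Y)$ with $Y\subseteq X$ and $w\in\W_X$ (this is indeed a subcomplex, since faces of such cells are again of this form). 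As $Sal(\Gamma)$ is regular, it is enough to construct a map $\pi$ from the face poset of $Sal(\Gamma)$ to that of $Sal(\Gamma_X)$ which is order-preserving, $\W_X$-equivariant, and the identity on the subposet just described: the induced simplicial map on barycentric subdivisions is then the desired retraction $p_X$.

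I would first define $\pi$ on vertices, i.e.\ on $\W[\Gamma]$. Every $w\in\W[\Gamma]$ factors uniquely as $w=u\cdot\widetilde w$ with $\widetilde w$ the minimal-length element of the coset $\W_X\,w$, $u\in\W_X$, and $l_S(w)=l_S(u)+l_S(\widetilde w)$; set $\pi(w):=u$. Since $\widetilde{(vw)}=\widetilde w$ for $v\in\W_X$ and $l_S(c\,\widetilde w)=l_S(c)+l_S(\widetilde w)$ for all $c\in\W_X$, this is $\W_X$-equivariant; and $\widetilde w=\id$ precisely when $w\in\W_X$, so $\pi$ fixes every vertex of $Sal(\Gamma_X)$.

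The heart of the matter is to show that $\pi$ maps the vertex set of every cell onto the vertex set of a cell of $Sal(\Gamma_X)$. Fix $\sigma=(w\,\W_Y,Y)$, whose vertex set $V(\sigma)$ is the coset $w\,\W_Y$, and pass to the $(\W_X,\W_Y)$-double coset of $w$: let $m$ be its element of minimal length, and write $w=u_0\,m\,v_0$ with $u_0\in\W_X$, $v_0\in\W_Y$ and $l_S(w)=l_S(u_0)+l_S(m)+l_S(v_0)$ (possible by the Bourbaki double-coset lemma recalled above). By the standard theory of double cosets in Coxeter groups, $\W_X\cap m\,\W_Y\,m^{-1}=\W_Z$ for a well-determined $Z\subseteq X$, and $\W_Z$ is finite, being conjugate into $\W_Y$; moreover the coset $u_0\,\W_Z$ depends only on $\sigma$, not on the chosen factorisation of $w$ (if $u_0\,m\,v_0=u_0'\,m\,v_0'$ then $(u_0')^{-1}u_0\in\W_X\cap m\,\W_Y\,m^{-1}=\W_Z$). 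Using $w\,\W_Y=u_0\,m\,\W_Y$, the $\W_X$-equivariance of $\pi$ on vertices, and the minimality of $m$, one checks that $\pi(V(\sigma))=u_0\,\W_Z$, which is the vertex set of the cell $(u_0\,\W_Z,Z)$ of $Sal(\Gamma_X)$; I then set $\pi(\sigma):=(u_0\,\W_Z,Z)$. I expect this last verification to be the main obstacle: it comes down to proving that $v\mapsto\pi(m\,v)$ maps $\W_Y$ onto $\W_Z$, which requires the precise interplay between minimal coset representatives and the double-coset decomposition, i.e.\ the genuinely combinatorial input about parabolic subgroups of Coxeter groups.

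Granting this, the remaining steps are formal. If $\tau\le\sigma$ in the face poset of $Sal(\Gamma)$ then $V(\tau)\subseteq V(\sigma)$, hence $\pi(V(\tau))\subseteq\pi(V(\sigma))$; since a Coxeter cell of $Sal(\Gamma_X)$ is determined by its vertex set and, whenever the vertex set of one such cell is contained in that of another, the first is a face of the second, we get $\pi(\tau)\le\pi(\sigma)$, so $\pi$ is order-preserving. It is $\W_X$-equivariant because the vertex map $\pi$ is and because forming the cell spanned by a set of vertices commutes with the action. Finally, if $\sigma$ lies in $Sal(\Gamma_X)$ then $\pi$ fixes $V(\sigma)$ pointwise, so $\pi(\sigma)=\sigma$, and $\pi$ is a retraction of posets. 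Realising $\pi$ on barycentric subdivisions gives the $\W_X$-equivariant (cellular) retraction $p_X:Sal(\Gamma)\to Sal(\Gamma_X)$ asserted by the theorem.
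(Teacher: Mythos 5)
This theorem is quoted from Godelle--Paris and the paper gives no proof of it, so the only meaningful comparison is with the argument in \cite{GodPar12} itself. Your overall strategy (define the retraction on the poset of cells via the decomposition $w=um v$ with $m$ the $(X,Y)$-minimal double coset representative, use $\W_X\cap m\W_Y m^{-1}=\W_Z$ with $Z\subseteq X$, then realise the poset map on barycentric subdivisions) is the right one and is in the spirit of the original proof; your vertex-level map $w\mapsto u$, where $w=u\widetilde w$ with $\widetilde w$ minimal in $\W_X w$, also matches the explicit description of the set retraction $p_X:\A[\Gamma]\to\A[\Gamma_X]$ recalled in the paper from \cite{BluPar23}.

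However, there is a genuine gap in your combinatorial model of $Sal(\Gamma)$, and it breaks the step you rely on to define $\pi$ on cells. Cells of the Salvetti complex are indexed by pairs $(w,Y)$ with $w\in\W[\Gamma]$ and $\W_Y$ finite, \emph{not} by cosets $(w\W_Y,Y)$, and a cell is not determined by its vertex set. Already for $\Gamma=A_1$ the complex $Sal(\Gamma)\simeq S^1$ has two vertices and two distinct edges $(1,\{s\})$ and $(s,\{s\})$ with the \emph{same} vertex set $\{1,s\}$; more generally $Sal(\Gamma_Y)$ for $Y$ spherical has $|\W_Y|$ top cells all sharing the vertex set $\W_Y$. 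Consequently: (i) the rule ``$\pi(\sigma):=$ the cell of $Sal(\Gamma_X)$ with vertex set $\pi(V(\sigma))$'' is not well defined (for cells already lying in $Sal(\Gamma_X)$ there are several candidates, and a retraction must send each to itself); (ii) the claim that containment of vertex sets implies the face relation is false, so order-preservation does not follow from $\pi(V(\tau))\subseteq\pi(V(\sigma))$; and (iii) the face poset of $Sal(\Gamma)$ is not ``$Z\subseteq Y$ and same coset'' but carries an additional minimality condition on the connecting element (one requires $w^{-1}u\in\W_Y$ \emph{and} $w^{-1}u$ minimal in its coset modulo $\W_Z$), without which, e.g., a hexagonal $2$-cell of $Sal(A_2)$ would acquire twelve edges as faces. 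The repair is to define $\pi$ directly on pairs, $\pi(w,Y)=(u_0,Z)$ with $w=u_0mv_0$ and $Z=X\cap mYm^{-1}$, and to verify order-preservation against the true order relation; that verification is precisely the combinatorial content of \cite[Theorem 2.2]{GodPar12} and cannot be reduced to a statement about images of vertex sets.
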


\noindent By definition of a retraction, the restriction of such a $p_X$ to $Sal(\Gamma_X)$ is the identity. This result has several important consequences. In particular, it implies that if $\Gamma$ satisfies the $K(\pi,1)$ conjecture, then $\Gamma_X$ satisfies the $K(\pi,1)$-conjecture. In general, the conjecture is stated only for finite Coxeter graphs, because in the infinite case the Tits cone is not well defined. However, the simplicial complex $Sal(\Gamma)$, as well as its quotient $\salbar$ under the action of $\W[\Gamma]$, are defined also in the infinite case (see Section 3 in \cite{Gav24} for further information).
\begin{rmk}\label{godelleparisforinfinity} The proof of Theorem \ref{retraction} does not require the finiteness of the set of generators $S$, even if it is not explicitly mentioned. Indeed, the proof only uses standard parabolic subgroups of $\W[\Gamma]$ of spherical type. Therefore, we can extend the result also to the case in which the set of vertices $S$ is infinite. Namely, for all $X\subset S$, we can state that there exists a retraction
\[p_X: Sal(\Gamma)\longrightarrow Sal(\Gamma_X)\]
to the natural inclusion $\iota: Sal(\Gamma_X)\longrightarrow Sal(\Gamma)$. By considering the induced homomorphism on the fundamental groups, we obtain that there exists a group homomorphism, that we denote again by $p_X$
\[ p_X:\CA[\Gamma]\longrightarrow \CA[\Gamma_X]\]
such that its restriction to $\CA[\Gamma_X]<\CA[\Gamma]$ is the identity.
\end{rmk}

\noindent The group homomorphism $p_X$ is actually the restriction to $\CA[\Gamma]$ of a set retraction $\A[\Gamma]\longrightarrow\A[\Gamma_X]$ for the inclusion $i: \A[\Gamma_X]\longrightarrow \A[\Gamma]$, which was introduced implicitly in \cite{CharPar} and explicitly in \cite{BluPar23}.\medskip \\
\noindent This set retraction $p_X:\A[\Gamma]\longrightarrow \A[\Gamma_X]$ is described as follows. Let $g$ be in $\A[\Gamma]$, and write it as $g=\sigma_{s_{1}}^{\varepsilon_{1}}\,\cdots \;\sigma_{s_{{p}}}^{\varepsilon_{p}}$ with $s_1,\ldots, s_p\in S$ and $\varepsilon_j\in \{\pm 1\}$ for all $j\in \{1,\ldots,p\}$. We set $u_0 = \id \in \W[\Gamma]$ 
and, for $j \in \{1,\ldots, p\}$, we set $u_j = s_{{1}}^{\varepsilon_{1}}\,\cdots \,s_{{j}}^{\varepsilon_{j}}\in \W[\Gamma]$. We write each $u_j$ in the form $u_j = v_j w_j$
where $v_j \in \W[\Gamma_X]$ and $w_j$ is $(X, \emptyset)$-minimal. Let $j \in \{1,\ldots, p\}$. We set $t_j = w_{j-1} s_{j} w_{j-1}^{-1}$ if $\varepsilon_j = 1$, and
$t_j = w_j s_{j} w_j^{-1}$ if $\varepsilon_j =-1$. If $t_j \notin X$, then we set $\gamma_j = \id$. If instead $t_j \in X$, then set $\gamma_j= \sigma_{t_j}^{\varepsilon_j}$. Finally, we set
\[ p_X(g)=p_X(\sigma_{s_{1}}^{\varepsilon_{1}}\,\cdots\; \sigma_{s_{p}}^{\varepsilon_{p}}) = \gamma_1\,\cdots\; \gamma_p\in \A[\Gamma_X]. \]
\noindent In \cite[Proposition 2.3]{BluPar23} the authors show that $p_X$ is well defined (i.e., it does not depend on the choice of a reduced expression of $g$) and that it is a group homomorphism when restricted to $\CA[\Gamma]\longrightarrow\CA[\Gamma_X]$.
\begin{rmk}
    The set retraction $p_X:\A[\Gamma]\longrightarrow \A[\Gamma_X]$ as defined above has been introduced for finite Coxeter graphs. Nevertheless, since it is defined on every $g\in \A[\Gamma]$ and each element has a finite support, $p_X$ is well defined also for an infinite countable $S$.
\end{rmk}

\noindent The construction of $p_X$ is used to establish the following result.

\begin{prop}\label{propcarina}
Let $\Gamma$ be a Coxeter graph with a countable set of vertices $S=\{s_n\,|\,n\in \mathbb{N}_{>0}\}$. For all $n\in \mathbb{N}_{>0}$, let $X_{n}=\{s_1,\ldots,s_{n}\}$, and let $a_n$ be an element of $\A[\Gamma_{X_n}]$ of the form $a_n=\sigma_{s_1}^k\,\cdots\; \sigma_{s_n}^k$ for some $k\in \mathbb{Z}$, $k\neq 0$. Then $a_n\notin\A[\Gamma_{X}]$ for any $X\subset X_n$, $X\neq X_n$.
    \end{prop}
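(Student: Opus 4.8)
\emph{Proof plan.} The plan is to use the set retraction $p_X\colon \A[\Gamma]\to\A[\Gamma_X]$ recalled above — which restricts to the identity on $\A[\Gamma_X]$ — together with the exponent-sum homomorphism $e\colon\A[\Gamma]\to\mathbb{Z}$ sending every $\sigma_s$ to $1$, which is well defined because both sides of each defining relation of $\A[\Gamma]$ have the same length; note $e(a_n)=nk$. The idea is that if $a_n$ belonged to $\A[\Gamma_X]$ for some proper $X\subsetneq X_n$, then $p_X(a_n)$ would have to equal $a_n$; I will instead compute $p_X(a_n)$ from the explicit recipe and show that $e\bigl(p_X(a_n)\bigr)<nk$, a contradiction. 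First I would reduce to the case $k\ge 1$: the assignment $\sigma_s\mapsto\sigma_s^{-1}$ extends to an automorphism of $\A[\Gamma]$ (it respects the Artin relations since these are sent to their own inverses), it preserves every standard parabolic $\A[\Gamma_X]$, and it turns $a_n$ into an element of the same form with $k$ replaced by $-k$; so no generality is lost.

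Assume then $k\ge 1$ and, for contradiction, $a_n=\sigma_{s_1}^k\cdots\sigma_{s_n}^k\in\A[\Gamma_X]$ with $X\ne X_n$. Let $i$ be the least index with $s_i\notin X$, so that $s_1,\dots,s_{i-1}\in X$, and run the construction of $p_X$ on the positive word $\sigma_{s_1}^k\cdots\sigma_{s_n}^k$ (length $p=nk$, all exponents $\varepsilon_j=+1$). The key local point is at the position $j_0=k(i-1)+1$ of the first letter $\sigma_{s_i}$: the partial product $u_{j_0-1}=s_1^{k}\cdots s_{i-1}^{k}$ already lies in $\W[\Gamma_X]$, so in its decomposition $u_{j_0-1}=v_{j_0-1}w_{j_0-1}$ the $(X,\emptyset)$-minimal part is $w_{j_0-1}=\id$; hence $t_{j_0}=w_{j_0-1}\,s_{j_0}\,w_{j_0-1}^{-1}=s_i\notin X$, and therefore $\gamma_{j_0}=\id$. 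Since every exponent is $+1$, each $\gamma_j$ is either $\id$ or $\sigma_{t_j}$, so $e(\gamma_j)\in\{0,1\}$ with $e(\gamma_{j_0})=0$, giving $e\bigl(p_X(a_n)\bigr)=\sum_{j=1}^p e(\gamma_j)\le p-1=nk-1<nk=e(a_n)$. Thus $p_X(a_n)\ne a_n$, contradicting the fact that $p_X$ is the identity on $\A[\Gamma_X]$; hence $a_n\notin\A[\Gamma_X]$.

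The argument is essentially mechanical once $p_X$ is available, and the one point I expect to require care is the verification that feeding the ``block'' word of $a_n$ into the recipe really forces $w_{j_0-1}=\id$ — equivalently, that the partial products stay inside $\W[\Gamma_X]$ until the first $s_i$ appears. This is exactly where the special shape of $a_n$ is used: every letter preceding position $j_0$ has index in $X$, so $u_{j_0-1}\in\W[\Gamma_X]$ and its $(X,\emptyset)$-minimal representative is trivial. If one prefers not to reduce to $k\ge 1$, the case $k\le -1$ is handled identically, using $t_j=w_j\,s_j\,w_j^{-1}$ for $\varepsilon_j=-1$: here one checks that $w_{j_0}=s_i$ (the decomposition of $u_{j_0}=u_{j_0-1}s_i$ is length-additive with $s_i$ itself $(X,\emptyset)$-minimal), so again $t_{j_0}=s_i\notin X$ and $e\bigl(p_X(a_n)\bigr)>e(a_n)$.
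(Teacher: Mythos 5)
Your proof is correct, and it takes a genuinely different route from the paper's. The paper splits on the parity of $k$: for $k$ even it notes $a_n\in\CA[\Gamma]$ and uses only the \emph{group-homomorphism} property of $p_X$ on colored Artin groups (computing $p_X(\sigma_{s_j}^{2l})=\id$ and cancelling to reach $\sigma_{s_j}^{2l}=\id$), while for $k$ odd it projects to the Coxeter group via $\omega$ and invokes the theorem of \cite{Par07} that Coxeter elements are essential, i.e.\ lie in no proper parabolic. You instead give a single argument uniform in $k\neq 0$: you unwind the explicit recipe for the set retraction $p_X$ on the block word for $a_n$, observe that the first occurrence of $\sigma_{s_i}$ (for $s_i\notin X$) is necessarily killed because the preceding partial product lies in $\W[\Gamma_X]$ and hence has trivial $(X,\emptyset)$-minimal part, and then detect the discrepancy with the exponent-sum homomorphism, contradicting $p_X|_{\A[\Gamma_X]}=\id$. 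Your version buys uniformity — no parity split and no appeal to the essentiality of Coxeter elements — at the price of relying on the word-independence of the set retraction on all of $\A[\Gamma]$ (\cite[Proposition 2.3]{BluPar23}, which the paper does quote) rather than only on its homomorphism property on $\CA[\Gamma]$. The local computation at position $j_0$ and the inequality $e(p_X(a_n))\le nk-1<nk$ are correct; the only cosmetic imprecision is the parenthetical justification that $\sigma_s\mapsto\sigma_s^{-1}$ sends each Artin relation ``to its own inverse'' (for $m_{s,t}$ even each side is sent to the inverse of the \emph{other} side, which still preserves the relation), and in any case your direct treatment of $k\le -1$ makes that reduction dispensable.
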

\noindent An interesting application of this proposition for $k=2$ will be used to show Lemma \ref{centralizerInfEdge}.

\begin{proof}
    We analyze separately the cases where $k$ is even and odd.\\
    Suppose that $k=2l$ is even. Then $a_n=\prod_{i=1}^n\sigma_{s_i}^{2l}$ belongs to $\CA[\Gamma]$, because its image under $\omega$ satisfies $\omega(a_n)=s_1^{2l}\,\cdots\, s_n^{2l}=\id$ in the Coxeter group $\W[\Gamma]$. Now, suppose $X=X_n\backslash \{s_j\}$ for some $j\in \{1,\ldots,n\}$, and assume $a_n\in \A[\Gamma_X]$. This implies $a_n \in\CA[\Gamma]\cap \A[\Gamma_X]=\CA[\Gamma_{X}]$. Now, consider the retraction homomorphism  
    \[p_X: \CA[\Gamma]\longrightarrow\CA[\Gamma_X],\]
    which is the identity on $\CA[\Gamma_X]$. Thus, $p_X(\sigma_{s_i}^{2l})=\sigma_{s_i}^{2l}$ if $i\neq j$. Furthermore, since we supposed that $a_n\in \CA[\Gamma_X]$, we also have that $p_X(a_n)=a_n$. By the construction of $p_X$, we easily see that $p_X(\sigma_{s_j}^{2l})=\id$. We now compute $p_X(a_n)=a_n$. Since $p_X$ is a group homomorphism, the last equality implies $p_X(\sigma_{s_1}^{2l})\;\cdots\;p_X(\sigma_{s_n}^{2l})=\sigma_{s_1}^{2l}\,\cdots\; \sigma_{s_n}^{2l
}$.   
Using the fact that $p_X(\sigma_{s_j}^{2l})=\id$, we can write:
\[
\sigma_{s_1}^{2l}\,\cdots\;\sigma_{s_{j-1}}^{2l} \,\sigma_{s_{j+1}}^{2l}\,\cdots\;\sigma_{s_n}^{2l
}=\sigma_{s_1}^{2l}\,\cdots\;\sigma_{s_{j-1}}^{2l}\,\sigma_{s_j}^{2l} \;\sigma_{s_{j+1}}^{2l}\,\cdots\;\sigma_{s_n}^{2l
}.
\]
Canceling out the common terms, we obtain $\sigma_{s_j}^{2l}=\id$, which is absurd. Therefore, $a_n$ cannot belong to any parabolic subgroup $\A[\Gamma_X]$ on a proper subset $X\subset X_n$.\medskip\\ 
\noindent Now let $k$ be odd, and suppose that $a_n$ belongs to a standard parabolic subgroup $\A[\Gamma_{X}]$ with $X$ strictly included in $X_n$. Consider the image of $a_n$ under the homomorphism $\omega$:
\begin{align*}
\omega\; :\; \A[\Gamma]\; &\xrightarrow[]{\qquad}\, \W[\Gamma],\\
a_n=\prod_{i=1}^n\sigma_{s_i}^{k}\,& \;\longmapsto\; \;\prod_{i=1}^n{s_i}^{k}.
\end{align*}
\noindent If $a_n\in \A[\Gamma_X]$, then $\omega(a_n)$ belongs to $\W[\Gamma_X]$. Since $k=2l+1$ is odd and in $\W[\Gamma]$ all the generators are involutions, the image $\omega(a_n)$ is $s_1s_2\,\cdots \,s_n$, which is a \textit{Coxeter element} of the Coxeter group $\W[\Gamma_{X_n}]$. It is known that Coxeter elements do not belong to any proper parabolic subgroup (see \cite[Theorem 3.1]{Par07}), so in particular $ \omega(a_n)\notin \W[\Gamma_X]$, which is a contradiction.\\
Therefore, $a_n$ never belongs to a standard parabolic subgroup with set of generators strictly contained in the set of generators appearing in the expression of $a_n$.

\end{proof}

\section{Infinity-connection and indecomposability}\label{inftyconnindec} 

In this section, we introduce a property called $\infty$-connection for Coxeter graphs $\Gamma$. We analyze these graphs, distinguishing between the cases of a finite or infinite number of vertices. In Subsection \ref{fingencase} we establish in Theorem \ref{SfinAindec} that if $\Gamma$ is an $\infty$-connected Coxeter graph on a finite set of vertices $S$, then the Artin group $\A[\Gamma]$ is indecomposable. This result relies on techniques involving Bass-Serre theory for groups acting on trees and amalgamated products of groups. To provide context, we also recall the main tools used in our arguments within the same subsection.\\
\\
\noindent In Subsection  \ref{infgencase}, we generalize Theorem \ref{SfinAindec} to some cases with an infinite set of generators. In particular, we consider the case in which the set $S$ admits a filtration $X_1\subset X_2 \subset\cdots \subset X_n\subset \cdots$ such that $\bigcup_{n=1}^{\infty} X_n=S$ and each $\Gamma_{X_n}$ is finite and $\infty$-connected. To achieve this result, we study the colored Artin group $\CA[\Gamma]$ associated with such a Coxeter graph $\Gamma$, which we prove to be indecomposable. Building on this, and using Nuida's result on the indecomposability of infinitely generated Coxeter groups, we conclude that $\A[\Gamma]$ is also indecomposable (Theorem \ref{SinfAindec}) when these conditions on $\Gamma$ hold. \medskip
\\
\noindent Later in Section \ref{kvaindec}, we use the $\infty$-connection property to show that the kernel $\KVA[\Gamma]$ is indecomposable.

\begin{defn}
Let $\Gamma$ be a Coxeter graph with a set of vertices $V(\Gamma)=S$. We say that $\Gamma$ is $\infty$-\textit{connected} if the subgraph $\Gamma^{\infty}$ obtained from $\Gamma$ by deleting all the edges labeled by integers $m_{s,t}< \infty$, is connected.
\end{defn}
\noindent Observe that such a Coxeter graph is always connected (i.e., $\W[\Gamma]$ is \textit{irreducible}) and can never be of spherical type unless $|V(\Gamma)|=1$.

\subsection{Finitely generated case}\label{fingencase}

\noindent
The aim of this subsection is to prove the following result.
\begin{thm}\label{SfinAindec}
    Let $\Gamma$ be an $\infty$-connected Coxeter graph on a finite set of vertices $S=\{s_1,\ldots,s_n\}$. Then, the Artin group $\A[\Gamma]$ is indecomposable.
\end{thm}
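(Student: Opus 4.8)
The plan is to prove indecomposability of $\A[\Gamma]$ when $\Gamma$ is $\infty$-connected and finite by exploiting the fact that $\A[\Gamma]$ then splits as an amalgamated product (or more generally as a graph of groups) along the $\infty$-labeled edges, and combining this with the structure of the abelianization. Concretely, suppose for contradiction that $\A[\Gamma] = H \times K$ with $H, K$ proper. First I would record the abelianization: since all relations in (VA1) that survive abelianization are the commutations coming from $m_{s,t}=2$, and an $\infty$-labeled edge imposes no relation, the abelianization $\A[\Gamma]^{\mathrm{ab}}$ is free abelian of rank $n=|S|$, with basis the images of the $\sigma_{s_i}$. A direct decomposition $\A[\Gamma]=H\times K$ induces $\A[\Gamma]^{\mathrm{ab}} = H^{\mathrm{ab}}\oplus K^{\mathrm{ab}}$, so both factors have nontrivial free-abelian quotients and in particular both $H$ and $K$ are infinite.

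The core of the argument should use Bass--Serre theory. Pick an $\infty$-labeled edge $\{s,t\}$ of $\Gamma^\infty$; removing it (or splitting along it) exhibits $\A[\Gamma]$ as a fundamental group of a graph of Artin groups in which every edge group is a standard parabolic $\A_{\{u,v\}}[\Gamma]\cong F_2$ for an $\infty$-edge $\{u,v\}$, or one reduces to a single amalgamation $\A[\Gamma] = \A[\Gamma_1]\ast_{\A_Y} \A[\Gamma_2]$ along a proper parabolic. The key fact I would invoke is that in a nontrivial splitting of a group as an amalgamated product or HNN extension, a direct factor must (after conjugation) be contained in a vertex group, provided the factor cannot itself act nontrivially on the Bass--Serre tree — and here the obstruction to acting is precisely that any element of a direct factor commutes with a large subgroup. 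More precisely: $K$ centralizes $H$; since $H$ is infinite it contains elements of infinite order, and in an amalgam along $F_2$ (or along a proper parabolic of an $\infty$-connected graph, which has trivial center by standard results), the centralizer of such an element is ``small'' — it lies in a conjugate of a vertex group — forcing $K$ into a conjugate of a vertex group, and symmetrically $H$ into a conjugate of a vertex group. Then $\A[\Gamma] = HK$ would force the whole group to lie in a bounded region of the tree, contradicting the non-triviality of the splitting (which exists because $\Gamma$ has at least two vertices joined by an $\infty$-edge, so neither parabolic is everything).

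The delicate point, and the one I expect to be the main obstacle, is handling the case where one of the direct factors acts on the Bass--Serre tree \emph{with a fixed point but the other acts hyperbolically}, or worse, the degenerate situation where the amalgam decomposition's vertex groups are themselves not yet known to be indecomposable — so one wants an induction on $|S|$. For the induction to close one needs the edge groups and vertex groups of the splitting to inherit enough structure: one should choose the splitting so that the vertex groups are Artin groups $\A[\Gamma']$ on \emph{proper} full subgraphs, but these need not be $\infty$-connected, so the inductive hypothesis does not directly apply. I would circumvent this by not inducting on Theorem~\ref{SfinAindec} itself but by a direct argument: show that any direct factor, being normalized by (indeed centralizing) the complementary factor, must contain or be contained in the ``$\infty$-generated'' part, and then use Proposition~\ref{propcarina} (with $k=2$, i.e. the element $\sigma_{s_1}^2\cdots\sigma_{s_n}^2 \in \CA[\Gamma]$ which lies in no proper parabolic) to derive that a direct factor cannot be a proper parabolic, and that its centralizer is trivial or central — squeezing out the contradiction. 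The real work is in verifying the centralizer computation: that in an $\infty$-connected Artin group the centralizer of the ``full-support'' element $\sigma_{s_1}^2\cdots\sigma_{s_n}^2$ (or of a suitable $\infty$-edge subgroup $F_2$) is trivial, which is where the acylindricity-type behavior of the tree action does the heavy lifting.
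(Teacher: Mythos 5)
Your proposal lands on the same key objects as the paper: the full-support element $\theta=\sigma_{s_1}^2\cdots\sigma_{s_n}^2$, Proposition~\ref{propcarina} with $k=2$ to show it avoids proper parabolics, and the Bass--Serre action coming from the amalgam decompositions $\A[\Gamma]=\A[\Gamma_{S\setminus\{s_j\}}]\frpp_{\A[\Gamma_{S\setminus\{s_i,s_j\}}]}\A[\Gamma_{S\setminus\{s_i\}}]$ along $\infty$-edges. That is exactly the paper's route (Lemmas~\ref{abhyperbolic}, \ref{centralizerInfEdge}, \ref{centralizerinA}). However, you explicitly defer the centralizer computation, and the version you state is wrong: $Z_{\A[\Gamma]}(\theta)$ cannot be trivial since it contains $\langle\theta\rangle$; the correct statement (Lemma~\ref{centralizerinA}) is $Z_{\A[\Gamma]}(\theta)=\langle\theta\rangle\cong\mathbb{Z}$, and the deduction of indecomposability then runs through the indecomposability of $\mathbb{Z}$ together with $H\leq Z_{\A[\Gamma]}(K)\leq Z_{\A[\Gamma]}(\theta)$, not through triviality.

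The genuine gap is that you never explain how $\infty$-connectedness (as opposed to the mere existence of one $\infty$-edge) enters. A single splitting only shows that an element of $Z_{\A[\Gamma]}(\theta)$ has the form $\theta^l c$ with $c$ in the edge group $\A[\Gamma_{S\setminus\{s_i,s_j\}}]$ (after ruling out the ``reflection'' isometries of the axis, which you do not address); this leaves a potentially large residual factor $c$. The paper's essential step is that $\infty$-connectedness provides such a splitting removing \emph{every} vertex $s_i$, that the translation lengths force all the exponents $l$ to agree, and that Van der Lek's intersection theorem then gives $c\in\bigcap_i\A[\Gamma_{S\setminus\{s_i,s_{j(i)}\}}]=\A[\Gamma_{\emptyset}]=\{\id\}$. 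Without this intersection argument the proof stalls. Separately, your abelianization remark is incorrect: for odd $m_{s,t}$ the Artin relation abelianizes to $\sigma_s=\sigma_t$, so $\A[\Gamma]^{\mathrm{ab}}$ need not have rank $n$, and in any case $H^{\mathrm{ab}}\oplus K^{\mathrm{ab}}$ free abelian does not force both summands to be nonzero; fortunately none of this is needed.
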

\noindent
 We point out that, aside from $\Gamma$ of spherical type, little is known about the indecomposability of $\A[\Gamma]$. This last case was studied by Paris in \cite{Par04}, where it is proven that irreducible Artin groups of spherical type are indecomposable. In Section \ref{kvaindec}, we will show that for any Coxeter graph $\Gamma$, the graph $\widehat{\Gamma}$ (whose vertices correspond bijectively with the roots in $\Phi[\Gamma]$) satisfies the property of $\infty$-connection. Thus, when $\hGamma$ has a finite number of vertices, namely when $\Gamma$ is of spherical type, we can apply Theorem \ref{SfinAindec} to conclude that $\A[\hGamma]=\KVA[\Gamma]$ is indecomposable.\\
\\
\noindent The proof of this theorem requires some preliminaries concerning the centralizers of specific elements in the group. Recall that if $G$ is a group and $E$ is a subset of $G$, the \textit{centralizer} of $E$ in $G$ is the subgroup $Z_{G}(E)=\{g\in G\,|\, ge=eg\,\,\; \mbox{for all }e \in E\}< G$. If $E$ is a singleton $\{e\}$, we write $Z_{G}(e)$ to mean the centralizer of $e$ in $G$. The \textit{center} of a group $G$ is the subgroup $Z(G)=\{g\in G\,|\, gh=hg\,\mbox{for all }h\in G\}$. Clearly, the center of a group is contained in the centralizer of any of its subsets. \\
\\
\noindent 
Now, we will consider a specific element in a finitely generated Artin group and attempt to describe its centralizer. 

\begin{lem}\label{centralizerinA}
Let $\Gamma$ be a connected Coxeter graph on a finite set of vertices $S=\{s_1,\ldots,s_n\}$, with $\Gamma$ $\infty$-connected. Let $\theta$ be $\theta=\sigma_{s_1}^2\,\sigma_{s_2}^2\,\cdots\, \sigma_{s_n}^2\in \A[\Gamma]$. Then, the centralizer $Z_{\A[\Gamma]}(\theta)$ is the infinite cyclic subgroup $\langle \theta \rangle$ generated by $\theta$.
\end{lem}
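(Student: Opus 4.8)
The plan is to exhibit $Z_{\A[\Gamma]}(\theta)$ as a subgroup of a vertex group in a graph-of-groups decomposition of $\A[\Gamma]$, where $\theta$ sits ``diagonally,'' and then invoke Bass--Serre theory to force the centralizer to be cyclic. Concretely, since $\Gamma$ is $\infty$-connected, the $\infty$-subgraph $\Gamma^\infty$ is connected and spanning, so it contains a spanning tree $T$; every edge of $\Gamma^\infty$ carries the label $\infty$, meaning the corresponding Artin relation is empty. Hence, using the standard parabolic subgroups $\A[\Gamma_{X}]$ (which coincide with $\langle X\rangle$ by Van der Lek) and the fact that for an $\infty$-labeled edge $\{s,t\}$ the subgroup $\A[\Gamma_{\{s,t\}}]$ is the free group $\langle \sigma_s\rangle * \langle \sigma_t\rangle$, I would realize $\A[\Gamma]$ as the fundamental group of a graph of groups built along a spanning tree $T$ of $\Gamma^\infty$: more carefully, I would instead use a single splitting of $\A[\Gamma]$ as an amalgamated product (or HNN extension) across one $\infty$-edge, say $\A[\Gamma] = \A[\Gamma_{S\setminus\{s_n\}}] *_{\A[\Gamma_{S\setminus\{s_{n-1},s_n\}}]} \A[\Gamma_{S\setminus\{s_{n-1}\}}]$ or a similar decomposition, exploiting that deleting an $\infty$-edge from a connected graph along a tree edge gives a vertex-separating pair.

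First I would set up the correct splitting: pick an edge $e=\{s_i,s_j\}$ of a spanning tree $T$ of $\Gamma^\infty$ whose removal disconnects $T$ into $T_1 \ni s_i$ and $T_2 \ni s_j$, inducing a partition $S = A \sqcup B$ with $s_i \in A$, $s_j \in B$. Because $e$ has label $\infty$ and it is the only $T$-edge between $A$ and $B$ — though there may be other $\Gamma$-edges with finite labels between $A$ and $B$, so in general I must be careful; the clean statement requires $e$ to be the \emph{only} $\Gamma$-edge between $A$ and $B$. This is the subtle point: $\infty$-connectedness gives a spanning connected $\Gamma^\infty$, but $\Gamma$ itself may have more edges. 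So instead I would take $T$ a spanning tree of $\Gamma^\infty$ and process one leaf: let $s_n$ be a leaf of $T$, joined (in $T$, hence in $\Gamma$ with label $\infty$) to some $s_{n-1}$. It is still possible that $s_n$ has finite-labeled edges to other vertices, so this does not immediately give a free-product splitting either. The genuinely robust approach, then, is to argue inductively on $n$ using the homomorphism $\omega : \A[\Gamma] \to \W[\Gamma]$ and the retractions $p_X$ from Proposition~\ref{propcarina} and Remark~\ref{godelleparisforinfinity}.

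Here is the approach I would actually carry out. Note $\theta \in \CA[\Gamma]$ since $\omega(\theta) = s_1^2 \cdots s_n^2 = \id$. Suppose $g \in \A[\Gamma]$ commutes with $\theta$. \textbf{Step 1:} Show $g \in \CA[\Gamma]$, i.e. $\omega(g)$ centralizes $\omega(\theta)=\id$ trivially — that gives nothing, so instead I use that $\langle\theta\rangle$ is normalized, not centralized, and pass to a more refined invariant: consider the abelianization $\A[\Gamma]^{ab} \cong \mathbb{Z}^n$ on which $\theta \mapsto (2,2,\dots,2)$, which shows at least that if $g = \theta^k h$ then the ``correction'' $h$ lands in $\CA[\Gamma]$ after adjusting, reducing to $g\in\CA[\Gamma]$ modulo $\langle\theta\rangle$. \textbf{Step 2:} Within $\CA[\Gamma]$, use the retraction $p_X : \CA[\Gamma] \to \CA[\Gamma_X]$ for $X = X_n \setminus \{s_j\}$ to push the equation $g\theta = \theta g$ forward; since $p_X$ fixes $\sigma_{s_i}^2$ for $i\neq j$ and kills $\sigma_{s_j}^2$, one gets $p_X(g)$ commuting with $\prod_{i\neq j}\sigma_{s_i}^2$, and Proposition~\ref{propcarina} (with $k=2$) tells us $\theta\notin \A[\Gamma_X]$ for any proper $X$, which is what prevents $g$ from having ``spurious'' support; combined with an induction on $|S|$ — the base case $|S|=1$ being $Z_{\langle\sigma_s\rangle}(\sigma_s^2)=\langle\sigma_s^2\rangle$ since $\langle\sigma_s\rangle\cong\mathbb{Z}$ — this forces $g \in \langle\theta\rangle$. \textbf{Step 3:} Finally $\langle\theta\rangle$ is infinite cyclic because $\theta$ has infinite order (its image in $\A[\Gamma]^{ab}=\mathbb{Z}^n$ is $(2,\dots,2)\neq 0$), completing the identification $Z_{\A[\Gamma]}(\theta)=\langle\theta\rangle$.

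\textbf{Main obstacle.} The crux is Step 2: controlling the support of an arbitrary $g$ commuting with $\theta$ and running the induction cleanly. The retractions $p_X$ are only set-maps on $\A[\Gamma]$ and honest homomorphisms on $\CA[\Gamma]$, so I must first genuinely reduce to $g \in \CA[\Gamma]$ (Step 1), and then the inductive step needs that deleting a vertex $s_j$ which is $\infty$-connected to the rest still leaves an $\infty$-connected — or at least connected enough — graph, which may fail, forcing a more careful choice of which vertex to delete (a leaf of the spanning tree $T$ of $\Gamma^\infty$, so that $\Gamma^\infty_{X_n\setminus\{s_j\}}$ stays connected). Handling the interaction between the $\infty$-edges used for the induction and the finite-labeled edges that $p_X$ must also track is where the real work lies; everything else is bookkeeping with abelianizations and the already-quoted Proposition~\ref{propcarina}.
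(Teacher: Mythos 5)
Your instinct to split $\A[\Gamma]$ over an $\infty$-edge is exactly the paper's route, but you talked yourself out of it for a reason that does not apply. The splitting in Theorem~\ref{Artingroupsareamalgamated} does not require a partition $S=A\sqcup B$ with the $\infty$-edge as the \emph{only} edge between the two sides: one takes $X=S\setminus\{s_j\}$ and $Y=S\setminus\{s_i\}$, so that $X\setminus(X\cap Y)=\{s_i\}$ and $Y\setminus(X\cap Y)=\{s_j\}$, and the hypothesis reduces to the single condition $m_{s_i,s_j}=\infty$; all finite-labeled edges incident to $s_i$ or $s_j$ live inside $\A[\Gamma_X]$ or $\A[\Gamma_Y]$ and cause no trouble. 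This is what the paper does (Lemma~\ref{centralizerInfEdge}): $\theta=ab$ with $a=\sigma_{s_1}^2\cdots\sigma_{s_i}^2$, $b=\sigma_{s_{i+1}}^2\cdots\sigma_{s_n}^2$ is hyperbolic on the Bass--Serre tree (Proposition~\ref{propcarina} gives $a,b\notin C$), any centralizing $h$ preserves the axis, reflections are excluded, and hence $h\in\theta^{l}\,\A[\Gamma_{S\setminus\{s_i,s_j\}}]$. Running this over all $\infty$-edges and intersecting the parabolics via Van der Lek ($\bigcap_i\A[\Gamma_{\mathcal C_{ij(i)}}]=\A[\Gamma_{\emptyset}]=\{\id\}$) forces $h=\theta^{l}$.

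The replacement plan you propose instead has two genuine gaps. First, Step~1 does not reduce to $g\in\CA[\Gamma]$ (even modulo $\langle\theta\rangle$): the abelianization is blind to centralizers since everything commutes there, and moreover $\A[\Gamma]^{\mathrm{ab}}\cong\mathbb Z^{n}$ fails whenever $\Gamma$ has an odd label (odd $m_{s,t}$ identifies $\sigma_s$ with $\sigma_t$ in the abelianization). Since the retractions $p_X$ are homomorphisms only on $\CA[\Gamma]$, the whole of Step~2 rests on this unestablished reduction. Second, even granting $g\in\CA[\Gamma]$, the inductive scheme only controls the \emph{images} $p_X(g)$: knowing $p_X(g)=\theta_X^{m}$ for the various vertex-deleted subgraphs $X$ does not determine $g$, because the kernels of the $p_X$ are large normal subgroups of $\CA[\Gamma]$ with no reason to intersect trivially, and the $p_X$ do not jointly separate points. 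What makes the paper's argument close is that the tree action yields the much stronger \emph{coset} statement $h\in\theta^{l}\A[\Gamma_{S\setminus\{s_i,s_j\}}]$ with the same exponent $l$ for every $\infty$-edge, and cosets of parabolics can be intersected exactly; image information under retractions cannot. As written, your argument does not reach the conclusion.
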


\noindent The proof of this lemma uses techniques from Bass-Serre theory, which can be found in detail in \cite{Serre} or more concisely in \cite[Chapter VII]{Baums93}. We outline here the main tools and establish the notations that will be used throughout this work.\\\\
\noindent
Across the article, by a \textit{tree} $T$ we will mean a simplicial tree, whose edges and vertices will be denoted respectively by $E(T)$ and $V(T)$. Recall that an action $G\times T\longrightarrow T$ of a group $G$ on a tree $T$ is said to be
\textit{without inversion} if for every edge $e\in E(T)$ and every $g\in G$, the equality $ge=e$ implies that $g$ fixes the two extremities of $e$.\\
\\
\noindent
Let $G$ be group that acts without inversion
 on a tree $T$. An element $h\in G$ is said to be \textit{hyperbolic} if for all $x\in V(T)$, $hx\neq x$. Observe that there is a natural distance $d:V(T)\times V(T)\longrightarrow \mathbb{N}$ on $T$, where each edge has length 1, and the distance between two vertices $x$ and $y$ in $V(T)$ is the length of the unique geodesic in $T$ joining $x$ and $y$. If $h\in G$ is a hyperbolic element, we define the following objects: 
\begin{align*}
    &\lambda_h=min\{d(x,hx)\,|\, x\in V(T)\}; &\M_h=\{x\in V(T)\,|\, d(x,hx)=\lambda_h\}.
\end{align*}
\noindent 
Let $\Lm_h$ be the full subgraph of $T$ spanned by $\M_h$. The following result, originally due to Tits, can be found in \cite[Proposition 24]{Serre}. It provides a characterization of the set $\Lm_h$ for each $h$ hyperbolic element. \medskip
\\
\noindent When $L$ is a graph, we will say that $L$ is a \textit{line} if all of its vertices have valence 2, namely if its geometric realization is
homeomorphic to $\mathbb R$.
\begin{prop}[Proposition 3.2 in \cite{Tits1970}]\label{linehyper} Let $G$ be a group acting without inversion on a tree $T$, and let $h\in G$ be a hyperbolic element. Then $\Lm_h$ is a line, and $h$ acts on it as a translation of amplitude $\lambda_h$.
\end{prop}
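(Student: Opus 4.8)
The plan is to construct the axis of $h$ explicitly and show it coincides with $\Lm_h$, using the median structure of the tree. First I would record the two easy facts that drive everything: since $h$ is hyperbolic, $hx\neq x$ for every vertex $x$, so $\lambda_h\geq 1$; and if $x\in\M_h$ then every translate $h^k x$ also lies in $\M_h$, because $d(h^k x,h^{k+1}x)=d(x,hx)=\lambda_h$ by equivariance.

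The heart of the argument, and the step I expect to be the main obstacle, is a local computation at a minimal vertex. Fix $x\in\M_h$ and write $n=\lambda_h=d(x,hx)$. I would look at the three vertices $x$, $hx$, $h^2x$ and their median $m$, i.e. the point where the geodesics $[hx,x]$ and $[hx,h^2x]$ first diverge from $hx$; set $p=d(hx,m)$, so $0\leq p\leq n$. Passing through $m$ gives $d(x,h^2x)=2(n-p)$, and — this is the key identity — since $m$ and $hm$ both lie on the geodesic $[hx,h^2x]$ at distances $p$ and $n-p$ from $hx$ respectively, one obtains $d(m,hm)=|n-2p|$. Minimality of $\lambda_h$ forces $|n-2p|\geq n$, hence $p=0$ or $p=n$. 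The case $p=n$ means $m=x$, so $x\in[hx,h^2x]$ and therefore $x=h^2x$; but then $h$ swaps the endpoints of $[x,hx]$, fixing its midpoint vertex (if $\lambda_h$ is even) or inverting its central edge (if $\lambda_h$ is odd), contradicting hyperbolicity or the no-inversion hypothesis, respectively. Thus $p=0$: consecutive segments $[h^k x,h^{k+1}x]$ meet without backtracking, so their union $L_x=\bigcup_{k\in\mathbb{Z}}[h^k x,h^{k+1}x]$ is a bi-infinite geodesic line on which $h$ acts as translation by $\lambda_h$. A short position count (a vertex at distance $j$ from $x$ along $L_x$ maps to one at distance $j+\lambda_h$) shows that every vertex of $L_x$, interior ones included, lies in $\M_h$, so $L_x\subseteq\Lm_h$.

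It then remains to prove the reverse inclusion, giving $\Lm_h=L_x$, a straight path. I would argue by contradiction: suppose $y\in\M_h$ does not lie on $L_x$, and let $z$ be its nearest-point projection onto $L_x$, with $r=d(y,z)>0$. Since $h$ is an isometry preserving $L_x$, the projection is $h$-equivariant, so $hy$ projects to $hz$, and $z\neq hz$ because $d(z,hz)=\lambda_h\geq 1$. As the branches $[z,y]$ and $[hz,hy]$ leave the line transversally, the concatenation $y\to z\to hz\to hy$ is geodesic, whence $d(y,hy)=\lambda_h+2r>\lambda_h$, contradicting $y\in\M_h$. Therefore $\M_h$ is exactly the vertex set of $L_x$, so $\Lm_h$ is a straight path and $h$ acts on it as a translation of amplitude $\lambda_h$, as claimed.
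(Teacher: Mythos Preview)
The paper does not supply a proof of this proposition; it is quoted as a classical result of Tits, with a reference to Serre's \emph{Trees} (Proposition~24), so there is no in-paper argument to compare against. Your proof is correct and is essentially the standard one: the median computation at $x,\,hx,\,h^2x$ giving $d(m,hm)=|n-2p|$ and the ensuing dichotomy $p\in\{0,n\}$ is exactly how Serre establishes the existence of the axis, and your nearest-point-projection argument for the reverse inclusion $\M_h\subseteq L_x$ is the usual way to finish. One small remark: when you invoke minimality to get $|n-2p|\geq n$, you are implicitly using that $m$ is a \emph{vertex} (so that $\lambda_h\leq d(m,hm)$ applies); this is automatic since the median of three vertices in a simplicial tree is a vertex, but it would not hurt to say so explicitly.
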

\noindent Actually, as we can see in \cite[Proposition 25]{Serre}, the fact of acting as a translation on an invariant line characterizes the hyperbolic elements.
\begin{prop}\cite[Proposition 25]{Serre} \label{linehyperIFF}
Let $G$ be a group acting without inversion on a tree $T$, and let $h$ be an element of $G$. Then $h$ is hyperbolic if and only if there is a line $\Lm$ in $T$, stable under $h$, on which $h$ acts as a translation of non-zero amplitude.
\end{prop}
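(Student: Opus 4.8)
The statement is an equivalence, and since Proposition~\ref{linehyper} already supplies most of one direction, the plan is to treat the two implications separately, spending essentially all the effort on the converse. Throughout I use that the $G$-action is by tree automorphisms, so every $g \in G$ preserves the distance $d$.

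For the forward implication, assume $h$ is hyperbolic. First I would record that $\lambda_h \neq 0$: since $h$ fixes no vertex, $hx \neq x$ for every $x \in V(T)$, so $d(x,hx)\geq 1$ for all $x$ and hence $\lambda_h \geq 1$. I would then take $\Lambda = \Lm_h$. That $\Lambda$ is a straight path on which $h$ acts as a translation of amplitude $\lambda_h$ is exactly Proposition~\ref{linehyper}, so the only thing left to verify is that $\Lambda$ is stable under $h$, i.e.\ that $h(\M_h)=\M_h$. This follows from $h$ being an isometry: if $x \in \M_h$ then $d(hx, h(hx)) = d(x,hx) = \lambda_h$, so $hx \in \M_h$; applying the same reasoning to $h^{-1}$ gives the reverse inclusion. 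Together with $\lambda_h \neq 0$, this exhibits the required straight path.

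For the converse, suppose a straight path $\Lambda$ is stable under $h$ and $h$ acts on it as a translation of amplitude $\ell \neq 0$. I want to show $h$ is hyperbolic, so I argue by contradiction and assume $h$ fixes some vertex $x_0 \in V(T)$. Since $\ell \neq 0$, the path $\Lambda$ must be bi-infinite, so I fix a parametrization of its vertices by $\mathbb{Z}$, say $(v_i)_{i \in \mathbb{Z}}$ with $v_i, v_{i+1}$ adjacent and $h(v_i) = v_{i+\ell}$. The core of the argument is the distance function $f(i) = d(x_0, v_i)$. Because $T$ is a tree and $\Lambda$ is a geodesic line, the nearest-point projection of $x_0$ onto $\Lambda$ is a single vertex $v_{i_0}$, and moving one step along $\Lambda$ toward or away from $v_{i_0}$ changes the distance by exactly $\mp 1$; thus $f(i) = |i - i_0| + c$ with $c = d(x_0, v_{i_0})$, which has a unique minimizer. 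On the other hand, since $h$ is an isometry with $h x_0 = x_0$, one computes $f(i+\ell) = d(x_0, h(v_i)) = d(h^{-1}x_0, v_i) = d(x_0, v_i) = f(i)$, so $f$ is periodic with period $\ell \neq 0$. This is impossible for a function of the form $|i-i_0|+c$ (equivalently, for any function on $\mathbb{Z}$ with a unique minimizer), and the contradiction shows $h$ has no fixed vertex, i.e.\ $h$ is hyperbolic.

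The main obstacle is the tree-geometric input in the converse: the uniqueness of the nearest-point projection of a vertex onto a bi-infinite geodesic line, and the resulting strict convexity (the explicit form $f(i)=|i-i_0|+c$) of the distance function. This is what turns the $\ell$-periodicity forced by $h x_0 = x_0$ into a genuine contradiction. Everything else is formal: the forward direction is a restatement of Proposition~\ref{linehyper} plus the observation $\lambda_h \neq 0$, and the periodicity computation uses nothing beyond $h$ being a fixed-point-preserving isometry. I would present the projection fact directly from the absence of cycles in $T$, since that keeps the argument self-contained.
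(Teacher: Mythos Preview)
Your argument is correct. The paper does not actually supply its own proof of this proposition: it cites Serre for both directions, invoking Proposition~\ref{linehyper} for the forward implication and pointing to \cite[Section 6.4]{Serre} (the study of fixed points of tree automorphisms) for the converse. Your treatment of the converse is a concrete, self-contained realization of that fixed-point analysis: projecting a hypothetical fixed vertex $x_0$ onto the line $\Lambda$ and observing that $f(i)=d(x_0,v_i)=|i-i_0|+c$ has a unique minimizer, while the assumption $hx_0=x_0$ forces $f$ to be $\ell$-periodic, is precisely how one extracts a contradiction from Serre's description of the fixed-point subtree. What you gain over the paper is self-containment; what the paper gains is brevity by outsourcing the tree geometry entirely to the reference.
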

\noindent The implication that does not coincide with Proposition \ref{linehyper} is a consequence of the study of the fixed points of an automorphism of a tree (see \cite[Section 6.4]{Serre} for more details).
\begin{nt}
    Given $G$ a group acting without inversion on
a tree $T$ and $h \in G$ a hyperbolic element, the line $\mathrm{L}_h$ described above will be called \textit{axis of the element} $h$.
\end{nt}
\noindent This characterization of hyperbolic elements is particularly useful for computing centralizers in a group acting (without inversion) on a tree.
\begin{lem}\label{centralhype}
Let $G$ be a group acting without inversion on a tree $T$, and let $h\in G$ be a hyperbolic element. If $g\in Z_{G}(h)$, then $g$ leaves the axis $\Lm_h$ invariant.
\end{lem}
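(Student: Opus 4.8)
The plan is to use the characterization of hyperbolic elements via their invariant axis (Proposition \ref{linehyperIFF}) together with the uniqueness built into Proposition \ref{linehyper}. Let $g \in Z_G(h)$. Since $h$ is hyperbolic, Proposition \ref{linehyper} tells us that $\Lm_h$ is a straight path on which $h$ acts as a translation of amplitude $\lambda_h > 0$. I want to show $g \cdot \Lm_h = \Lm_h$.

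First I would observe that $g \cdot \Lm_h$ is again a straight path (a graph automorphism of $T$ sends straight paths to straight paths), and that it is stable under $h$: indeed, for any $y = g x$ with $x \in \Lm_h$, we have $h y = h g x = g h x$, and since $h x \in \Lm_h$ we get $h y \in g \cdot \Lm_h$; the same argument applied to $h^{-1}$ gives stability under $h^{-1}$, so $g\cdot\Lm_h$ is an $h$-stable straight path. Moreover $h$ acts on $g\cdot\Lm_h$ as a translation of the same amplitude $\lambda_h$: for $x\in\Lm_h$, the distance $d(gx, h(gx)) = d(gx, g(hx)) = d(x, hx) = \lambda_h$ because $g$ is an isometry of $T$, so every point of $g\cdot\Lm_h$ is moved distance exactly $\lambda_h$ by $h$, i.e. $g\cdot\Lm_h \subseteq \M_h$, hence $g\cdot\Lm_h \subseteq \Lm_h$. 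Since $g$ is a bijection of $T$ and $\Lm_h$ is a path (infinite in both directions, as $h$ translates it), $g\cdot\Lm_h$ and $\Lm_h$ are two straight paths with $g\cdot\Lm_h\subseteq\Lm_h$; applying the same inclusion to $g^{-1}\in Z_G(h)$ gives $g^{-1}\cdot\Lm_h\subseteq\Lm_h$, i.e. $\Lm_h\subseteq g\cdot\Lm_h$, and therefore $g\cdot\Lm_h = \Lm_h$.

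Alternatively, and more cleanly, one can argue directly from the definition: $\M_h$ itself is $g$-invariant. For $x \in V(T)$, $d(gx, h(gx)) = d(gx, g(hx)) = d(x, hx)$ since $g$ acts by isometries, so $x \in \M_h$ if and only if $gx \in \M_h$. Hence $g$ permutes $\M_h$, so it maps the full subgraph $\Lm_h$ spanned by $\M_h$ onto itself. This is really the whole proof, and it is short; the only mild subtlety worth spelling out is why $d(x,hx)=d(gx,h(gx))$, which is exactly the fact that $g$ commutes with $h$ and acts by graph isometries. I do not anticipate a genuine obstacle here — the statement is essentially a formal consequence of $g$ normalizing the ``minimal displacement'' data of $h$.
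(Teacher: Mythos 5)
Your proof is correct, and your second (``cleaner'') argument --- showing $d(gx,h(gx))=d(gx,g(hx))=d(x,hx)$ so that $g$ permutes $\M_h$ and hence preserves the spanned subgraph $\Lm_h$ --- is exactly the argument given in the paper. The longer first variant is unnecessary but not wrong.
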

\begin{proof}
    Suppose that $g\in Z_{G}(h)$, meaning $ghg^{-1}=h$ and then that $\mathrm{L}_h=\mathrm{L}_{ghg^{-1}}$. Since $\mathrm{L}_{ghg^{-1}}=g(\mathrm{L}_{h})$, we can conclude that $\mathrm{L}_{h}=g(\mathrm{L}_{h})$.
\end{proof}
\noindent We will apply this result with $G$ an amalgamated product of groups and $T$ a specific tree. For an in-depth treatment of amalgams, see the references by Serre \cite{Serre} and Baumslag \cite{Baums93}. Here, we simply recall the fundamental structural results about amalgamated products.

\begin{thm}[\cite{Serre}]
    \label{normalformamalgam}
Let $A_1,\ldots, A_p,C$ be a collection of groups. We suppose
that $C$ is a subgroup of $A_j$ for all $j\in \{1, \ldots, p\}$, and consider the amalgamated
product \[G = A_1\;\frpp_C\; A_2\; \frpp_C\; \cdots\; \frpp_C\; A_p.\] For each $j \in \{1,\ldots,p\}$, we choose a transversal
$T_j$ of $C$ in $A_j$. Then each element $g\in G$ can be uniquely written in the form
$g =t_1\,t_2\, \cdots \,t_k\, c$ such that:
\begin{itemize}
    \item[1)] $c\in C$, and for each $i \in \{1,\ldots,k\}$, there exists $j = j(i) \in \{1, \ldots,p\}$ such that
$t_i \in T_j^* = T_j\, \backslash \,\{\id\}$,
\item[2)] $j(i)\neq j(i + 1)$ for all $i\in \{1, \ldots,k-1\}$.
\end{itemize}
In particular, we have $g \in C$ if and only if $k = 0$ and $g = c$.
\end{thm}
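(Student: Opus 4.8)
The plan is to prove this by the classical permutation-representation method of van der Waerden. Existence of a normal form is the easy half and I would dispatch it first by a rewriting argument; uniqueness is the substance and will come from exhibiting an action of $G$ on the set of formal reduced sequences. Throughout I use the defining property of the transversal: each $a\in A_j$ factors uniquely as $a=tc$ with $t\in T_j$ and $c\in C$.

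For existence, take any expression of $g$ as a product of elements of the factors $A_1,\ldots,A_p$ and apply repeatedly two moves: (i) merge two adjacent syllables lying in the same factor $A_j$ into a single element of $A_j$; and (ii) given a syllable $a\in A_j$, split off its $C$-part as $a=tc$ and absorb $c$ into the syllable immediately to its right (legitimate because $C\subseteq A_{j'}$ for every $j'$, and into the trailing position if $a$ is last). Iterating drives any word into the shape $t_1\cdots t_k\,c$ with consecutive indices distinct and $c\in C$, which is exactly the asserted form.

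For uniqueness, let $\Omega$ be the set of formal reduced sequences $(t_1,\ldots,t_k;c)$ with $t_i\in T_{j(i)}^*$, $j(i)\neq j(i+1)$, and $c\in C$, and let $\varepsilon=(\,;\mathrm{id})$ be the empty sequence. For each $j$ and each $a\in A_j$ I would define a permutation of $\Omega$ by recursion on $k$: if $k\geq 1$ and the leading index equals $j$, merge $a$ with $t_1$ in $A_j$, write the product as $t_1'c_1'$, keep $t_1'$ (if nontrivial) and feed $c_1'$ recursively into the shorter tail $(t_2,\ldots,t_k;c)$; if $k\geq 1$ and the leading index differs from $j$, write $a=t'c'$, prepend $t'$, and feed $c'$ into $(t_1,\ldots,t_k;c)$; and the base case $k=0$ rewrites $ac\in A_j$ as $t'c'$. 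The recursion terminates because each feed step strictly shortens the tail, noting that an interior $C$-part never collapses a syllable (that would force some $t_i\in C$, contradicting $t_i\in T_{j(i)}^*$). Once each $A_j$ is checked to act on $\Omega$ and the several actions are seen to restrict to one and the same action of $C$, the universal property of $A_1\frpp_C\cdots\frpp_C A_p$ assembles them into an action of $G$ on $\Omega$. A short induction then gives $(t_1\cdots t_k c)\cdot\varepsilon=(t_1,\ldots,t_k;c)$, so the reduced sequence is recovered from the group element alone; hence two reduced expressions of the same $g$ coincide, and the final clause $g\in C\iff k=0$ is immediate.

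The crux, and the step I expect to be the main obstacle, is verifying that each $A_j$ genuinely acts, i.e. that $a\cdot(a'\cdot w)=(aa')\cdot w$ for all $a,a'\in A_j$ and $w\in\Omega$; all the combinatorial bookkeeping concentrates here, and I would carry it out by induction on the length $k$ with a case split on whether the leading index of $w$ equals $j$. By contrast, compatibility on $C$ is nearly automatic: the action of any $c_0\in C$ reduces — whichever factor one regards it as belonging to — to multiplying $c_0$ into the leading syllable (or into $c$ when $k=0$), a prescription manifestly independent of the ambient $A_j$.
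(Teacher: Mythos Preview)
The paper does not supply its own proof of this theorem; it is quoted verbatim as a background result from Serre's \emph{Trees} and used without argument. Your proposal is precisely the classical van der Waerden permutation-representation proof (the one Serre himself gives), so there is nothing to compare: existence by rewriting, uniqueness by letting $G$ act on the set of reduced sequences and reading off the normal form as $g\cdot\varepsilon$. The outline is correct, and you have correctly located the only genuine work in checking that each $A_j$ acts (i.e.\ $a\cdot(a'\cdot w)=(aa')\cdot w$). One small point worth tightening: your termination argument is cleanest if you first define the $C$-action directly as a length-preserving cascade through the syllables (which you essentially do), and then define the $A_j$-action in a single step on top of it, rather than phrasing everything as one recursion; as written, the case ``leading index $\neq j$, feed $c'$ into the same-length tail'' looks circular until one unpacks that feeding a $C$-element is a finite loop, not a recursive call of the same kind.
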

\noindent The expression of $g$ given above is called the \textit{normal form} of $g$.\medskip \\
 \noindent In the case of only two factors, there is a handy consequence of the previous theorem.
\begin{lem}[\cite{Serre}]\label{alternformamalg}
Let $G=A\,\frpp_C\, B$, where $A,B$ and $C$ are groups and $C$ is a subgroup of both $A$ and $B$. Then each $g\in G\,\backslash\, C$ has an expression $g=m_1\;\cdots\; m_n$, such that:
\begin{itemize}
    \item For each $i\in \{1,\ldots,n\}$, there exists $j=j(i)\in\{1,2\}$ such that $m_i\in D_{j(i)}\backslash C$, where $D_1=A$ and $D_2=B$. 
    \item For all $i\in \{1,\ldots,n-1\}$, $j(i)\neq j(i+1)$.
\end{itemize} 
Furthermore, the integer $n>0$ depends only on $g$, that is, any two such representations of $g$ have the same number of factors.
\end{lem}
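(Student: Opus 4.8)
The plan is to obtain Lemma~\ref{alternformamalg} as the two-factor specialisation of the normal form theorem, Theorem~\ref{normalformamalgam}, applied with $p=2$, $A_1=A$ and $A_2=B$ (so $D_1=A$, $D_2=B$ in the notation of the statement). First I would fix transversals $T_1$ of $C$ in $A=D_1$ and $T_2$ of $C$ in $B=D_2$, chosen so that $\id$ represents the coset $C$ itself; then $T_j^{*}=T_j\setminus\{\id\}$ is a set of representatives of the non-trivial cosets, and in particular $T_j^{*}\cap C=\emptyset$.

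For the existence assertion, take $g\in G\setminus C$ and write its normal form $g=t_1t_2\cdots t_k\,c$ as in Theorem~\ref{normalformamalgam}; since $g\notin C$ we must have $k\ge 1$. Set $m_i=t_i$ for $1\le i\le k-1$ and $m_k=t_k c$. For $i<k$ one has $m_i\in T_{j(i)}^{*}\subseteq D_{j(i)}\setminus C$, while $m_k\in D_{j(k)}$ and $m_k\notin C$, because $m_k\in C$ would force $t_k=m_k c^{-1}\in C$, contradicting $t_k\in T_{j(k)}^{*}$. The alternation condition $j(i)\neq j(i+1)$ is inherited directly from the normal form, so $g=m_1\cdots m_k$ has the required shape with $n=k>0$.

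For the invariance of $n$, I would run this reduction backwards. Given any expression $g=m_1\cdots m_n$ with $m_i\in D_{j(i)}\setminus C$ and $j(i)\neq j(i+1)$, I successively absorb $C$-cosets to the right: since $m_1\notin C$, write $m_1=t_1c_1$ with $t_1\in T_{j(1)}^{*}$ and $c_1\in C$; inductively $c_{i-1}m_i\in D_{j(i)}$ and $c_{i-1}m_i\notin C$ (as $m_i\notin C$ and $c_{i-1}\in C$), hence $c_{i-1}m_i=t_ic_i$ with $t_i\in T_{j(i)}^{*}$ and $c_i\in C$. After $n$ steps this produces $g=t_1t_2\cdots t_n\,c_n$ with the same alternation pattern, which is exactly the normal form of $g$. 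By the uniqueness clause of Theorem~\ref{normalformamalgam}, the syllable count of the normal form depends only on $g$, so $n$ equals that count and is independent of the chosen expression.

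The argument is essentially bookkeeping; the only point requiring care is the transversal convention, namely insisting that $\id$ be the representative of $C$ so that ``non-trivial coset representative'' and ``not in $C$'' coincide, together with the check that absorbing an element of $C$ neither annihilates a syllable nor disturbs the alternation. An alternative, arguably cleaner, route avoids transversals altogether: let $G$ act on the Bass--Serre tree $T$ of the splitting $G=A\frpp_C B$ and observe that the syllable length of $g\in G\setminus C$ is determined by the combinatorics of the geodesic in $T$ joining a suitable base vertex to its $g$-translate, which is manifestly an invariant of $g$; this is also the viewpoint that meshes with the hyperbolic-element machinery (Propositions~\ref{linehyper} and~\ref{linehyperIFF}) exploited later in the section.
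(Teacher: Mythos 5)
Your derivation is correct and matches the paper's intent exactly: the paper gives no proof, citing \cite{Serre} and describing the lemma as ``a handy consequence of'' Theorem~\ref{normalformamalgam}, and your two-way translation between alternating products and normal forms (with the $\id\in T_j$ convention made explicit) is precisely that consequence spelled out. Nothing is missing.
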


\noindent An expression of $g$ as in the previous lemma is sometimes called a \textit{strictly alternating} $(A\backslash C)\sqcup(B\backslash C)$-product. \medskip\\
\noindent Let $G=A\,\frpp_C \, B$, where $A,B$ and $C$ are groups, and $C$ is a subgroup of both $A$ and $B$. Then $G$ acts naturally without inversion on a tree $T$, described as follows:
\begin{itemize}
    \item The set of vertices $V(T)$ of $T$ is the disjoint union of the sets of left cosets $\{gA\,|\, g\in G\}$ and $\{gB\,|\,g\in G\}$, corresponding to $A$ and $B$, respectively.
    \item The set of edges $E(T)$ of $T$ is the set of left cosets $\{gC\,|\, g\in G\}$, corresponding to $C$.
    \item For each $g\in G$, the extremities of the edge $gC$ are $gA$ and $gB$.
\end{itemize}
\begin{figure}[h!]
    \centering \includegraphics[width=13cm]{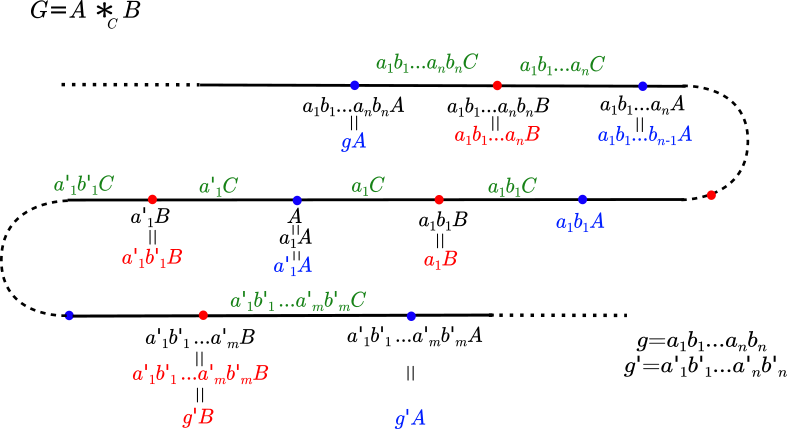}
    \caption{The tree $T$ associated with an amalgamated product of groups $G=A\,\frpp_{C}\,B$.}
    \label{generaltreefig}
\end{figure}
\noindent Further information can be found in \cite[Chapter VII]{Baums93}. In Figure \ref{generaltreefig} we illustrate as an example the unique geodesic path in $T$ joining the vertices $gA$ and $g'B$, with $g=a_1b_1\,\cdots \,a_n b_n$, $g'=a_1'b_1'\,\cdots \,a_m'b_m'\in G$ and $a_i,a_j'\in A\backslash C$, $b_i,b_j'\in B\backslash C$ for all $i\in \{1,\ldots,n\}$ and all $j\in \{1,\ldots,m\}$. Note that $G$ acts on $T$ by left multiplication. \\
\\
\noindent In the next lemma, we consider an amalgamated product of groups acting on its tree as we described above, and we show that an element of a certain form is always hyperbolic.

\begin{lem}\label{abhyperbolic}
Let $A,B$ be two groups, and let $C$ be a subgroup of both. Consider the amalgamated product $G=A\, \frpp_{C} \,B$, and let $h$ be an element of the form $h=ab$, with $a\in A\backslash C$ and $b\in B\backslash C$. Then, $h$ is a hyperbolic element acting on the tree $T$ associated with the amalgamated product $A\,\frpp_C \,B$. Moreover, $\mathrm{L}_{h}$ is the subgraph of $T$ spanned by the vertex set $\bigcup_{l\in \mathbb Z} \{h^lA, h^l B\}$,
and $h$ acts on $\mathrm{L}_h$ amplitude of 2.
\end{lem}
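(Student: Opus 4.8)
The plan is to use the characterization of hyperbolic elements from Proposition~\ref{linehyperIFF}: it suffices to exhibit a straight path $\Lm$ in $T$ that is stable under $h=ab$ and on which $h$ acts as a translation of nonzero amplitude. The natural candidate is the bi-infinite path through the vertices
\[
\ldots,\; h^{-1}A,\; h^{-1}B,\; A,\; B,\; hA,\; hB,\; h^2A,\; h^2B,\; \ldots
\]
obtained by translating the length-$2$ segment $A \,\text{---}\, B \,\text{---}\, hA$ by all powers of $h$. First I would check that the three vertices $A$, $B$, $hA=abA=aB$ are pairwise distinct in $T$: indeed $A\neq B$ since these lie in the two different cosets families, and $aB\neq A$ (they are in different families) and $aB\neq B$ because $a\notin C=B\cap$ (the stabilizer relation), i.e. $aB=B$ would force $a\in B$, hence $a\in A\cap B = C$, contradicting $a\in A\backslash C$. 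One also needs the edges $A\,\text{---}\,B$ and $B\,\text{---}\,hA$ to be genuinely distinct edges of $T$ (not backtracking), i.e. the path $A,B,hA$ has no backtrack; this is exactly the statement that the reduced word $ab$ of length $2$ gives a path of length $2$, which follows from the normal form theorem (Theorem~\ref{normalformamalgam}) or directly from the coset-tree description.

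Next I would verify that the infinite concatenation $\Lm := \bigcup_{k\in\mathbb Z} h^k\{A\,\text{---}\,B\,\text{---}\,hA\}$ is a straight (i.e. geodesic, non-self-intersecting) path. Since $T$ is a tree, a bi-infinite path is geodesic as soon as it has no backtracking at any vertex; the only vertices where two consecutive segments meet are of the form $h^kA$ and $h^kB$, and at $h^kB$ the incoming edge is $h^k(A\,\text{---}\,B)$ while the outgoing edge is $h^k(B\,\text{---}\,hA)$, which we already argued are distinct; at $h^kA = h^{k-1}(hA)$ the incoming edge is $h^{k-1}(B\,\text{---}\,hA)$ and the outgoing is $h^k(A\,\text{---}\,B)$, and non-backtracking here amounts to $h^{-1}B \neq B$, equivalently $b^{-1}a^{-1}B\neq B$, i.e. $a^{-1}\notin B$ after using $b\in B$, which again is $a\notin C$. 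So $\Lm$ is a straight path, it is by construction stable under $h$, and $h$ translates it by $2$ (shifting the fundamental segment by one application of $h$), in particular with nonzero amplitude. By Proposition~\ref{linehyperIFF}, $h$ is hyperbolic, and as a byproduct $\lambda_h = 2$ and $\Lm = \Lm_h$ up to the usual identification.

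The main obstacle — though a mild one — is the bookkeeping needed to confirm non-backtracking, i.e. that $ab$ really has "length $2$" as a path in $T$; this is the place where the hypotheses $a\in A\backslash C$ and $b\in B\backslash C$ are used in an essential way, and it is cleanest to phrase it via the normal-form/reduced-word machinery of Theorem~\ref{normalformamalgam} rather than re-deriving the tree structure. Everything else is formal: once $\Lm$ is known to be a straight $h$-stable path with translation length $2>0$, hyperbolicity is immediate from the cited propositions.
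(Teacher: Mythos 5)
Your overall strategy is exactly the paper's: invoke Proposition~\ref{linehyperIFF}, exhibit the axis as the $h$-orbit of a length-two segment, and verify non-backtracking using $a\notin C$ and $b\notin C$ via the normal form. However, there is a concrete error in your identification of the axis. You write $hA=abA=aB$, but $abA$ is a coset of $A$ while $aB$ is a coset of $B$, so these are different vertices of $T$ (indeed they lie in the two different parts of the bipartition); the correct identity is $hB=abB=aB$. As a consequence, the vertex sequence you propose, $\ldots,h^{-1}A,h^{-1}B,A,B,hA,hB,\ldots$, is not a path in $T$: the neighbours of $A$ are the cosets $a'B$ with $a'\in A$, so $A$ is adjacent to $B$ and to $aB=hB$, but \emph{not} to $h^{-1}B=b^{-1}a^{-1}B$ (a normal-form computation shows $a'^{-1}b^{-1}a^{-1}\notin B$ for any $a'\in A$), and likewise $B$ is not adjacent to $hA=abA$. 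The fundamental segment should be $B\,\text{---}\,A\,\text{---}\,hB$, giving the axis $\ldots,h^{-1}A,\,B,\,A,\,hB,\,hA,\,h^2B,\ldots$, which is the path used in the paper.

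Once the labels are corrected, the rest of your argument goes through verbatim and coincides with the paper's proof: non-backtracking at $A$ is the statement $B\neq hB=aB$, i.e.\ $a\notin B\cap A=C$; non-backtracking at $B$ is $A\neq h^{-1}A=b^{-1}A$, i.e.\ $b\notin C$; translating by powers of $h$ handles all other vertices; and $h$ acts on the resulting straight path $\Lm_h$ as a translation of amplitude $2$, so Proposition~\ref{linehyperIFF} applies. So this is a repairable bookkeeping slip rather than a missing idea, but as written the displayed ``path'' does not exist in $T$ and the proof does not compile without the correction.
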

\begin{proof}
    Thanks to Proposition \ref{linehyperIFF}, to show that $h$ is hyperbolic, it suffices to show that there exists a line $\Lm$ in $T$ such that $\Lm$ is stable under $h=ab$, and $h$ acts on such a path as a translation of non-zero amplitude. \medskip \\
    \noindent Consider the subgraph $\Lm$ of $T$ such that the vertices $V(\Lm)$ are $\{h^lA\,|\,l\in \mathbb{Z}\}\sqcup \{h^lB\,|\, l\in \mathbb{Z}\}$. In Figure \ref{pathhypELM} we illustrate as an example the path in $\Lm$ joining the vertices $h^{-2}A$ and $h^2B$. It is clear that $h$ leaves $\Lm$ invariant.
    \begin{figure}[h!]
        \centering
        \includegraphics[width=16cm]{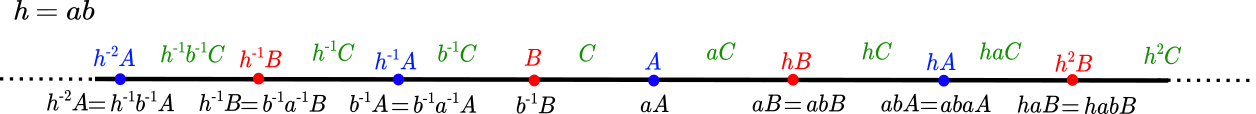}
        \caption{The path joining $h^{-2}A$ and $h^2B$.}
        \label{pathhypELM}
    \end{figure}
    \noindent In a graph $X$, the shortest path joining any two vertices is called a \textit{geodesic}. \medskip\\
    \noindent By \cite[Proposition I.8]{Serre}, we know that in a tree, a geodesic is unique and forms a line. In this context, a path is a geodesic (i.e. the shortest path between two vertices) if it does not contain back-trackings. We recall that a path whose vertices are $V_0, V_1,\ldots ,V_i,\ldots,V_n$ has a back-tracking if there exists an $i\in \{1,\ldots,n-1\}$ such that $V_{i-1}=V_{i+1}$. \medskip\\ 
\noindent We aim to show that the path whose vertices are $h^{-1}A$, $B$, $A$, $hB$ is the geodesic joining $h^{-1}A$ to $hB$, meaning it contains no back-tracking.  \medskip \\
\noindent First, we verify that these four vertices are distinct. In fact, two vertices labeled by two cosets corresponding to different subgroups of $G$, can never coincide. Therefore, $B$ and $A$ are necessarily distinct, $B$ does not coincide with $h^{-1}A$, and $A$ does not coincide with $hB=abB=aB$.\medskip\\ \noindent Now, suppose for contradiction that $B=hB=abB=aB$. This would imply $a\in B$, contradicting our assumption that $a\in A\backslash C=A\backslash (A\cap B)$. In the same way we show that the vertices $h^{-1}A$ and $A$ are distinct, hence, the path from  $h^{-1}A$ to $hB$ is a geodesic. \medskip \\
    \noindent Next, to show that $\Lm$ is a line, we verify that it does not contain back-trackings. Observe that, if $\Lm$ has a back-tracking in $h^lA$ for some $l\in \mathbb{Z}$, then $(h^{-1}A\,,B,\,A\,,hB)$ has a back-tracking in $A$. Similarly, if $\Lm$ has a back-tracking in $h^lB$ for some $l\in\mathbb{Z}$, then $(h^{-1}A\,,B,\,A\,,hB)$ has a back-tracking in $B$. Since $(h^{-1}A\,,B,\,A\,,hB)$ has no back-tracking, it follows that $\Lm$ has no back-tracking, that is, $\Lm$ is a line.
   \medskip \\
    \noindent The fact that $h$ acts on $\Lm$ as a  translation of non-zero amplitude, is now clear. We can then conclude, thanks to Proposition \ref{linehyperIFF}, that $h=ab$ is hyperbolic.\medskip\\
    \noindent Now remark that if $h$, $A,B,C$ and $T$ are as above, then  \[\lambda_{h}=min\{d(x, h x)\,|\, x\in V(T) \}=2.\] Indeed, by the construction of $T$, if $x$ is a vertex of the form $gA$, its adjacent vertices are of the form $g'B$, for some $g'\in G$. Since the action of $h$ cannot send a coset with respect to $A$ to one with respect to $B$, the distance $d(x, h x)$ must be at least two. In fact, this bound is realized: for instance, the vertex $A=\id A$ is mapped by $h=ab$ to $abA=hA$, which is at distance 2 from $A$, as shown in Figure \ref{pathhypELM}. Thus, $\lambda_{h}=2$, and $h$ acts on the axis $\Lm_{h}$, that is the full subgraph spanned by $\M_h=\{x\in V(T)\,|\, d(x,h x)=\lambda_{h}\}$, as a shift by two places. Additionally, the vertices of $\Lm_h$ are \[V(\Lm_h)=\M_h=\{h^lA\,|\, l\in \mathbb{Z}\}\sqcup \{h^lB\,|\, l\in \mathbb{Z}\},\] and the edges are $E(\Lm_h)=\{h^lC\,|\, l\in \mathbb{Z}\}\sqcup \{h^laC\,|\, l\in \mathbb{N}\}\sqcup \{h^{-l}b^{-1}C\,|\, l\in \mathbb{N}\}$.
\end{proof}

\begin{rmk}\label{lambdaofhyperAB}
Observe that since $h^{-l}b^{-1}=h^{-l}b^{-1}a^{-1}a=h^{-l-1}a$, the edges $E(\Lm_h)$ can be expressed more concisely as \[E(\Lm_h)=\{h^lC\,|\, l\in \mathbb{Z}\}\sqcup \{h^laC\,|\, l\in \mathbb{Z}\}.\]
    
\end{rmk}
\begin{rmk}\label{powerofahyp}
Let $h$ be a hyperbolic element acting on a tree.
If $h$ is hyperbolic, then so is $h^l$ for any $l\neq 0$. Furthermore, $\Lm_h = \Lm_{h^l}$ and $\lambda_{h^l} = |l|\,\lambda_h $.
\end{rmk}

\noindent This construction will be used to describe the centralizer of certain special elements in a finitely generated Artin group $\A[\Gamma]$ which has at least one edge of $\Gamma$ labeled by $\infty$. Indeed, as shown in the next result, Artin groups with an $\infty$-labeled edge can be expressed as amalgamated products.

\begin{lem}\cite[Lemma 3.3]{BellPar20} \label{Artingroupsareamalgamated}
Let $\Gamma$ be a Coxeter graph with a finite set of vertices $S$. Let $X$ and $Y$ be two subsets of $S$ such that $X\cup Y=S$ and $m_{s,t}=\infty$ for all $s\in X\backslash (X\cap Y)$ and $y\in Y\backslash (X\cap Y)$. Then 
\[ \A[\Gamma]=\, \A[\Gamma_{X}]\,\frpp_{\A[\Gamma_{X\cap Y}]}\,\A[\Gamma_Y].\]
\end{lem}

\noindent We are now ready to study the centralizers of certain hyperbolic elements in an Artin group with at least one $\infty$-labeled edge.

\begin{lem}\label{centralizerInfEdge}
 Let $\Gamma$ be a connected Coxeter graph on a finite set of vertices $S=\{s_1,\ldots,s_n\}$. Let $i,j\in \{1,\ldots,n\}$ be such that $i\neq j$ and $m_{s_i,s_j}=\infty$. Let $\theta$ be $\theta=\sigma_{s_1}^2\,\sigma_{s_2}^2\,\cdots\, \sigma_{s_n}^2\in \A[\Gamma]$. Let $h\in \A[\Gamma]$ belong to the centralizer $Z_{\A[\Gamma]}(\theta^k)$ for some $k\in \mathbb Z$. Then, $h$ has the form $h=\theta^{kl}\,c$, where $l\in \mathbb{Z}$ and $c\in \A[\Gamma_{S\backslash\{s_i,s_j\}}]$.   
\end{lem}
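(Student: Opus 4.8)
The strategy is to exploit the amalgamated-product decomposition coming from the $\infty$-labeled edge between $s_i$ and $s_j$. Set $X = S\setminus\{s_j\}$ and $Y = S\setminus\{s_i\}$, so that $X\cup Y = S$ and the only vertices in $X\setminus(X\cap Y) = \{s_i\}$ and $Y\setminus(X\cap Y) = \{s_j\}$ are joined by an $\infty$-labeled edge; hence Theorem \ref{Artingroupsareamalgamated} gives
\[
\A[\Gamma] = \A[\Gamma_X]\,\frpp_{\A[\Gamma_{S\setminus\{s_i,s_j\}}]}\,\A[\Gamma_Y].
\]
Write $A = \A[\Gamma_X]$, $B = \A[\Gamma_Y]$, $C = \A[\Gamma_{S\setminus\{s_i,s_j\}}]$, and let $T$ be the associated Bass--Serre tree. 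Rewriting $\theta$ after reordering the (commuting-free, so I must be careful) product, the plan is to group the factors of $\theta$ so that it takes the form $\theta = a b$ with $a\in A\setminus C$ and $b\in B\setminus C$: collect all $\sigma_{s_k}^2$ with $s_k\in X\setminus\{s_i\}\cup\{s_i\}$ wait—more precisely, I would write $\theta = (\sigma_{s_1}^2\cdots)\cdot\sigma_{s_i}^2\cdot(\cdots)\cdot\sigma_{s_j}^2\cdot(\cdots)$ and absorb everything up to and including $\sigma_{s_i}^2$ into a factor lying in $A$ and everything from $\sigma_{s_j}^2$ onward — together with the remaining terms — into $B$; one must check these factors genuinely lie outside $C$, which follows from Proposition \ref{propcarina} (with $k=2$) since $\sigma_{s_i}^2$ appears and $s_i\notin S\setminus\{s_i,s_j\}$, and similarly for the $B$-factor using $\sigma_{s_j}^2$. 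This requires $s_i$ and $s_j$ to be consecutive in the chosen ordering, so I would first reorder $S$ so that $s_i, s_j$ are adjacent in the word for $\theta$; since the statement of the lemma lets me label vertices freely, this is harmless.

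\textbf{Main argument.} By Lemma \ref{abhyperbolic}, $\theta = ab$ is hyperbolic on $T$, and by Remark \ref{lambdaofhyperAB} we have $\lambda_\theta = 2$ with
\[
V(\Lm_\theta) = \{\theta^l A\mid l\in\mathbb{Z}\}\sqcup\{\theta^l B\mid l\in\mathbb{Z}\},\qquad
E(\Lm_\theta) = \{\theta^l C\mid l\in\mathbb{Z}\}\sqcup\{\theta^l a C\mid l\in\mathbb{Z}\}.
\]
Now take $h\in Z_{\A[\Gamma]}(\theta)$. By Lemma \ref{centralhype}, $h$ leaves the straight path $\Lm_\theta$ invariant, so $h$ induces an automorphism of the line $\Lm_\theta$, i.e. either a translation or a reflection. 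A reflection would invert $\theta$'s translation direction, forcing $h\theta h^{-1} = \theta^{-1}$, contradicting $h\theta = \theta h$ (as $\theta$ has infinite order — it maps to a nontrivial element under the homomorphism to $\mathbb{Z}$ sending every $\sigma_s\mapsto 1$, or one can cite that $\langle\theta\rangle$ is infinite cyclic, e.g. from the braid-like structure). So $h$ acts as a translation on $\Lm_\theta$; since $\theta$ translates by $2$ and generates the full translation subgroup of $\operatorname{Aut}(\Lm_\theta)$ that preserves the vertex-type partition by even amount, $h$ translates by some even amount $2l$, hence $\theta^{-l}h$ fixes $\Lm_\theta$ pointwise. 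In particular $\theta^{-l}h$ fixes the vertex $A = \id\cdot A$ and the vertex $B$, so $\theta^{-l}h \in A\cap B = C = \A[\Gamma_{S\setminus\{s_i,s_j\}}]$ (using that the stabilizer of the vertex $gA$ is $gAg^{-1}$ and the stabilizer of the edge $gC$ is $gCg^{-1}$, a standard fact about the Bass--Serre tree). Writing $c = \theta^{-l}h$ gives $h = \theta^l c$ with $c\in\A[\Gamma_{S\setminus\{s_i,s_j\}}]$, as claimed.

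\textbf{Anticipated obstacle.} The delicate point is the bookkeeping needed to put $\theta$ into the shape $ab$ with $a\in A\setminus C$, $b\in B\setminus C$ — in particular, justifying that after reordering the vertices the ``left half'' of $\theta$ really lies in $A$ but not in $C$, and symmetrically for $B$. This is exactly where Proposition \ref{propcarina} is needed: if, say, the $A$-factor $a = \sigma_{s_1}^2\cdots\sigma_{s_i}^2$ were in $C = \A[\Gamma_{S\setminus\{s_i,s_j\}}]$, it would lie in a standard parabolic on a set not containing $s_i$, contradicting the proposition (applied to the sub-chain $\{s_1,\dots,s_i\}$ after an appropriate relabeling, with $k=2$). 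A secondary subtlety is ruling out the reflection case for $h$; I would handle it by noting that any orientation-reversing isometry of $\Lm_\theta$ conjugates $\theta$ to $\theta^{-1}$, which is incompatible with centralizing $\theta$ once we know $\theta$ has infinite order. Neither obstacle is deep, but the first demands care in choosing the ordering of $S$ and in invoking Proposition \ref{propcarina} with the correct sub-graph.
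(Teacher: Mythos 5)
Your proposal is correct and follows essentially the same route as the paper's proof: the same amalgamated decomposition $A\,\frpp_{C}\,B$ from Theorem \ref{Artingroupsareamalgamated}, the same splitting $\theta=ab$, hyperbolicity of $\theta$ via Lemma \ref{abhyperbolic} and Remark \ref{lambdaofhyperAB}, Proposition \ref{propcarina} to verify $a\notin C$ and $b\notin C$, and the translation/reflection dichotomy on $\Lm_\theta$ with the reflection case excluded because it would conjugate $\theta$ to $\theta^{-1}$. The one point to correct is the proposed reordering of $S$: it is both unnecessary and not actually harmless, since the elements $\sigma_{s_k}^2$ do not commute, so permuting the factors changes $\theta$ and you would end up proving the lemma for a different element (which would also break the downstream use in Lemma \ref{centralizerinA}, where a single fixed $\theta$ must be analysed simultaneously for every $\infty$-labeled edge). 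No adjacency of $s_i$ and $s_j$ in the word is needed: assuming without loss of generality that $i<j$, the split $a=\sigma_{s_1}^2\cdots\sigma_{s_i}^2$ and $b=\sigma_{s_{i+1}}^2\cdots\sigma_{s_n}^2$ already satisfies $a\in A=\A[\Gamma_{S\backslash\{s_j\}}]$ (the prefix never involves $s_j$ because $j>i$) and $b\in B=\A[\Gamma_{S\backslash\{s_i\}}]$ (the suffix never involves $s_i$), which is exactly the splitting used in the paper.
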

\begin{proof}
    Define the sets $X:=S\backslash \{s_j\}$, $Y:=S\backslash \{s_i\}$ and $Z:=X\cap Y=S\backslash\{s_i,s_j\}$. Set now $A=\A[\Gamma_X]$, $B=\A[\Gamma_{Y}]$ and $C=\A[\Gamma_Z]$. Thanks to Lemma \ref{Artingroupsareamalgamated}, we know that $G:=\A[\Gamma]=A\,\frpp_{C}\,B$. Modulo swapping the roles of $s_i$ and $s_j$, we can suppose that $i<j$.\medskip \\
\noindent As noted earlier, $G=\A[\Gamma]$ acts naturally by left multiplication on a tree $T$, where the vertices are $V(T)=\{gA\,|\, g\in G\}\sqcup \{gB\,|\, g\in G\}$, and the edges are $E(T)=\{gC\,|\, g\in G\}$. The two extremities of each edge $gC$ are $gA$ and $gB$.\medskip \\
\noindent Now, consider the element $\theta=\sigma_{s_1}^2\cdots \,\sigma_{s_i}^2\, \sigma_{s_{i+1}}^2 \cdots \, \sigma_{s_j}^2 \cdots\, \sigma_{s_n}^2$. We first show that $\theta$ is hyperbolic (acting) on $T$ and then use Lemma \ref{centralhype} to describe explicitly its centralizer.\medskip \\
\noindent Define $\sigma_{s_1}^2\cdots\, \sigma_{s_i}^2=a\in A$ and $\sigma_{s_{i+1}}^2\cdots\; \sigma_{s_{n}}^2=b\in B$. We can then rewrite $\theta$ as $\theta=a b$. By Lemma \ref{abhyperbolic}, to conclude that $\theta=ab$ is hyperbolic, it suffices to show that $a\in A\backslash C$ and $b\in B\backslash C$. Consider the element $a$, that is the product of all the squared generators of the standard parabolic subgroup $\A[\Gamma_{\{s_1,\ldots,s_i\}}]<\A[\Gamma_X]=A$. Proposition \ref{propcarina} ensures that $a$ does not belong to any standard parabolic subgroup of $\A[\Gamma_{\{s_1,\ldots,s_i\}}]$ whose set of generators is strictly contained in $\{\sigma_{s_1},\ldots,\sigma_{s_i}\}$. In particular, $a\notin \A[\Gamma_{\{s_1,\ldots,s_{i-1}\}}]$. Suppose now that $a\in C=\A[\Gamma_{S\backslash\{s_i,s_j\}}]$. By \cite{van1983homotopy}, this would imply that $a\in C\cap \A[\Gamma_{\{s_1,\ldots,s_i\}}]=\A[\Gamma_{\{s_1,\ldots, s_{i-1}\}}]$, which we have just shown to be false. Therefore, $a\notin C$. A similar argument shows that  $b\notin C=\A[\Gamma_{S\backslash \{s_i,s_j\}}]$. Thus, $\theta=ab\in A\,\frpp_{C}\, B$ is hyperbolic. By Remark \ref{powerofahyp}, we also have that $\theta^k$ is hyperbolic for any $k\in \mathbb Z$.\medskip
\\
\noindent As in Lemma \ref{abhyperbolic}, the integer $\lambda_{\theta}$ equals two, so that $\lambda_{\theta^k}=|k|\lambda_{\theta}=2|k|$, and $\theta^k$ acts on the axis $\Lm_{\theta^k}$, which is the full sub-tree spanned by $\M_{\theta^k}=\{x\in V(T)\,|\, d(x,\theta^k x)=\lambda_{\theta^k}\}$, as a shift by $2|k|$ places. In the same lemma, we also pointed out that the vertices of $\Lm_{\theta^k}$ are $V(\Lm_{\theta^k})=\M_{\theta^k}=\{\theta^{kl} A\,|\,l\in \mathbb{Z} \}\cup \{\theta^{kl} \,B\,|\, l\in \mathbb{Z}\}$, while the edges are $E(\Lm_{\theta^k})=\{\theta^{kl} C\,|\, l\in \mathbb{Z}\}\sqcup \{\theta^{kl}aC\,|\, l\in \mathbb{Z}\}$.\medskip
\\
\noindent As we saw in Lemma \ref{centralhype}, if $h\in G$ belongs to the centralizer of $\theta^k$, then it must preserve $\Lm_{\theta^k}$. This means that for each $x\in V(\Lm_{\theta^k})$ and each $e\in E(\Lm_{\theta^k})$, we must have $hx\in V(\Lm_{\theta^k})$ and $he\in E(\Lm_{\theta^k})$. The only possible automorphisms that preserve the axis $\Lm_{\theta^k}$ (which is a bi-infinite line in the tree) are either translations (of vector parallel to the axis) or reflections (with axis orthogonal to the axis).\medskip\\
\noindent Specifically, the edge labeled by the coset $C$ is mapped by $h$ to another edge of $\Lm_{\theta^k}$, which can be either of the form $\theta^{kl}C$ with $l\in \mathbb{Z}$, or $\theta^{kl}aC$ with $l\in \mathbb{Z}$. Consequently, $h$ must be of the form $h=\theta^{kl}c$ with $c\in C$ and $l\in \mathbb{Z}$, or $h=\theta^{kl}ac$ with $c\in C$ and $l\in \mathbb{Z}$. If $h=\theta^{kl}c$, then it acts on $\Lm_{\theta^k}$ as a translation of amplitude $2kl$. On the other hand, if $h=\theta^{kl}ac$, it acts on $\Lm_{\theta^k}$ as a reflection. \medskip \\
\noindent Now, suppose $h=\theta^{kl}ac$ with $l\in \mathbb{Z}$ and $c\in C$. Then the edge represented by the coset $C$ is sent by $h$ in the edge represented by $\theta^{kl}aC$. Moreover, observe that 
\begin{equation}\label{imagesofh}
    \begin{cases}
        
    &hC=\theta^{kl}aC,\\
    &hB=\theta^{kl}acB=\theta^{kl+1}B,\\
&hA=\theta^{kl}acA=\theta^{kl}A.
    \end{cases}
\end{equation}
\begin{figure}[h!]\centering\includegraphics[width=15cm]{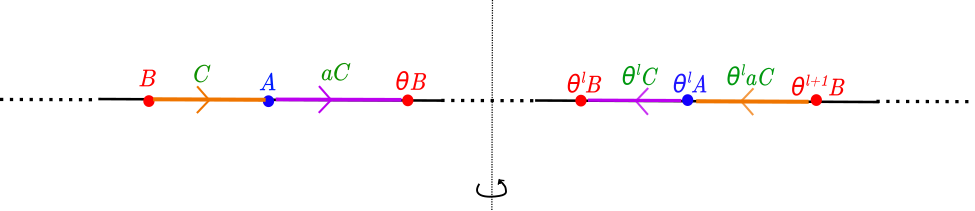}
    \caption{The action of $h=\theta^lac$ on $L_{\theta}$}
    \label{linereflection}
\end{figure}
\noindent The action of such an $h=\theta^{kl}ac$ is illustrated in Figure \ref{linereflection}. The edge $C$ and its image $hC$ are colored in orange.
\noindent
We want now to show that if $h$ is a reflection, we find an absurd. \medskip \\
\noindent Since $\theta^k|_{\Lm_{\theta^k}}$ is a translation, if we assume $h|_{\Lm_{\theta^k}}$ to be a reflection, we have:
\[
\theta|_{\Lm_{\theta^k}}\,=h\theta^k h^{-1}|_{\Lm_{\theta^k}}=h|_{\Lm_{\theta^k}} \;\cdot\theta^k|_{\Lm_{\theta^k}}\;\cdot h^{-1}|_{\Lm_{\theta^k}}=(\theta^k|_{\Lm_{\theta^k}})^{-1}=\theta^{-k}|_{\Lm_{\theta^k}}. 
\]
However, the previous equation cannot hold, because $\theta^k|_{\Lm_{\theta^k}}$ has infinite order.

\noindent
Therefore, $h$ cannot be a reflection, and the only possibility for $h$ to be in $Z_{\A[\Gamma]}(\theta^k)$ is  \[h=\theta^{kl}c,\]
for some $l\in \mathbb{Z}$ and some $c\in C=\A[\Gamma_{S\backslash\{s_i,s_j\}}]$.
\end{proof}
\noindent
This result is in particular true for $k=1$, and it will be crucial in analyzing the case where the Coxeter graph is $\infty$-connected. In particular, this structure implies that the associated Artin group can be written as an amalgamated product in multiple ways. As we will see in the proof of Lemma \ref{centralizerinA}, the expressions of $h$ in the centralizer of $\theta$ must be compatible with all such decompositions, forcing $h$ to belong to the infinite cyclic subgroup generated by $\theta$.

\begin{proof}[Proof of Lemma \ref{centralizerinA}]
Consider the Artin group $\A[\Gamma]$ and its associated Coxeter matrix $\M[\Gamma]$. For each $i,j\in \{1,\ldots,n\}$ where $i\neq j$, let the entry $m_{s_i,s_j}$ of $\M[\Gamma]$ be denoted simply by $m_{i,j}$. Since $\Gamma$ is $\infty$-connected, for any $s_i$ in $\{s_1,\ldots,s_n\}$, there exists a $j=j(i)\in \{1,\ldots, n\}$ such that $s_i\neq s_{j}$ and $m_{i,j}=\infty$. Without loss of generality, suppose that $j>i$.\medskip
\\
\noindent Now, define the sets $\mathcal{B}_j=S\backslash \{s_i\}$, $\mathcal{A}_i=S\backslash \{s_j\}$, and $\mathcal{C}_{ij}=\mathcal{A}_i\cap \mathcal{B}_j=S\backslash \{s_i,s_j\}$. We denote $\A[\Gamma_{\mathcal{A}_i}]$ by $A_i$, $\A[\Gamma_{\mathcal{B}_j}]$ by $B_j$, and $\A[\Gamma_{\mathcal{C}_{ij}}]$ by $C_{ij}$.
According to Lemma \ref{Artingroupsareamalgamated}, we know that we can write $G=\A[\Gamma]=A_i\,\frpp _{C_{ij}}\, B_j$.  \medskip \\
\noindent
We can now apply Lemma \ref{centralizerInfEdge} to deduce that, if $h\in Z_{\A[\Gamma]}(\theta)$, then $h=\theta^{l_{ij}}\,c_{ij}$ for some $l_{i,j}\in \mathbb{Z}$ and $c_{ij}\in C_{ij}$. We recall that $C_{ij}=\A[\Gamma_{\mathcal{C}_{ij}}]=\A[\Gamma_{S\backslash\{s_i,s_j\}}]$. The integer $l_{ij}$ and the element $c_{ij}$ depend on $i$ and $j=j(i)$. \medskip\\
\noindent Now, suppose there is another index $k\in \{1,\ldots,n\}$ such that $i<k$ and $m_{ik}=\infty$. Therefore, we have  $h=\theta^{l_{ij}}c_{ij}$ for $c_{ij}\in \A[\Gamma_{S\backslash\{s_i,s_j\}}]$, but also $h=\theta^{l_{ik}}c_{ik}$ for $c_{ik}\in \A[\Gamma_{S\backslash\{s_i,s_k\}}]$. So we get
\[
\theta^{l_{ij}}c_{ij}=\theta^{l_{ik}}c_{ik}\qquad \Longrightarrow \qquad \theta^{l_{ij}-l_{ik}}=c_{ik}c_{ij}^{-1}\in \A[\Gamma_{S\backslash \{s_i\}}]=B_j=B_k.
\]
As we observed in Remark \ref{powerofahyp}, if $l_{ij}-l_{ik}\neq 0$ then $\theta^{l_{ij}-l_{ik}}=(a_i b_j)^{l_{ij}-l_{ik}}$ acts on $\Lm_{\theta}$ as a translation of amplitude $l_{ij}-l_{ik}$ and it has no fixed points in $\Lm_{\theta}$. The element $c_{ik}c_{ij}^{-1}$ instead, belonging to $B_j$ (which coincides with $B_k$) fixes the vertex labeled by the coset $B_j$.
Therefore, the only possibility is 
$l:=l_{ij}=l_{ik}$.\medskip\\
\noindent Now, since $\Gamma$ is $\infty$-connected, we can conclude that for all $i\in \{1,\ldots,n\}$, all the integers $l_{ij(i)}$ must be equal. We set then $l=l_{ij(i)}$ for all $i$.\medskip
\\
\noindent Thus, we have shown that for any $1\leq i \leq n$, we can write $h\in Z_{\A[\Gamma]}(\theta)$ as $h=\theta^l c_{ij(i)}$, where $l\in \mathbb{Z}$ and $c_{ij(i)}\in C_{ij(i)}=\A[\Gamma_{\mathcal{C}_{ij(i)}}]$. In particular, we have  $c_{ij(i)}=\theta^{-l}h$ for all $i$, which implies that $c_{ij(i)}=c_{i'j(i')}$ for all $i,i'\in \{1,\ldots,n\}$. We set then $c_{ij}=c\in C_{ij(i)}$. But this must be true for all $i$, thus $c\in \bigcap_{i=1}^{n} C_{ij(i)}=\bigcap_{i=1}^{n} \A[\Gamma_{\mathcal{C}_{ij(i)}}]$, that by \cite{van1983homotopy} is $ \A[\Gamma_{\bigcap_{i=1}^{n}\mathcal{C}_{ij(i)}}]$. Now, since by construction $\bigcap_{i=1}^{n}\mathcal{C}_{ij(i)}=\emptyset$, we find $c=\id$. Finally, we showed that if $h\in Z_{\A[\Gamma]}(\theta)$, then $h=\theta^l$ for some $l\in \mathbb{Z}$. Hence, 
$Z_{\A[\Gamma]}(\theta)=\langle \theta \rangle$.
\end{proof}

\noindent As announced before, the previous result will be the key to prove the indecomposability of the Artin group $\A[\Gamma]$. \medskip \\
\noindent The following lemma is very easy to prove, but it leads to very useful results, and it will be used during the whole work, so we include here its proof.

\begin{lem}\label{lemmadecompcentralizer}
    Let $G,A,B$ be three groups such that $G=A\times B$, and let $E$ be a subset of $G$. Then, the centralizer of $E$ decomposes as $Z_G(E)=(Z_G(E)\cap A)\times( Z_G(E)\cap B)$.
\end{lem}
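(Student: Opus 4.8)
The plan is to use the \emph{internal} direct product description of $G=A\times B$: identifying $A$ and $B$ with their images in $G$, we have $A\cap B=\{\id\}$, every element of $A$ commutes with every element of $B$, and every $g\in G$ is written \emph{uniquely} as $g=ab$ with $a\in A$, $b\in B$. The whole proof is then a coordinatewise commutation argument, and the only thing to be careful about is that the product structure of $G$ is used twice: once to split the centralizing element, and once to split each element it must centralize.

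First I would prove $Z_G(E)\subseteq (Z_G(E)\cap A)\cdot(Z_G(E)\cap B)$. Take $g\in Z_G(E)$ and write $g=ab$ with $a\in A$, $b\in B$. For an arbitrary $e\in E$, write $e=e_Ae_B$ with $e_A\in A$, $e_B\in B$. Using that elements of $A$ and $B$ commute, $ge=(ae_A)(be_B)$ and $eg=(e_Aa)(e_Bb)$, with $ae_A,e_Aa\in A$ and $be_B,e_Bb\in B$; since $g\in Z_G(E)$ these are equal, so uniqueness of the $A$--$B$ decomposition forces $ae_A=e_Aa$ and $be_B=e_Bb$. Then $ae=ae_Ae_B=e_Aae_B=e_Ae_Ba=ea$ (the middle step again using that $a\in A$ commutes with $e_B\in B$), so $a\in Z_G(E)\cap A$; symmetrically $b\in Z_G(E)\cap B$. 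Hence $g=ab\in (Z_G(E)\cap A)(Z_G(E)\cap B)$.

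The reverse inclusion is immediate, since $Z_G(E)$ is a subgroup of $G$ containing both $Z_G(E)\cap A$ and $Z_G(E)\cap B$, hence their product. To upgrade this set equality to a direct product decomposition, observe that $Z_G(E)\cap A$ and $Z_G(E)\cap B$ commute elementwise (being contained in $A$ and $B$), that their intersection lies in $A\cap B=\{\id\}$, and that their product is all of $Z_G(E)$ by the above; these are precisely the conditions for an internal direct product, so $Z_G(E)=(Z_G(E)\cap A)\times(Z_G(E)\cap B)$.

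There is no genuine obstacle in this lemma; the one subtlety worth flagging explicitly is the appeal to uniqueness of the $A$--$B$ decomposition, which is what makes the passage from $ge=eg$ to the pair of identities $ae_A=e_Aa$ and $be_B=e_Bb$ legitimate rather than merely suggestive.
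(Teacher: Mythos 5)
Your proof is correct and follows essentially the same route as the paper's: split the centralizing element $g$ and each $e\in E$ into their $A$- and $B$-components, equate components of $ge=eg$, and use cross-commutation of $A$ and $B$ to conclude each component centralizes $E$. The only difference is cosmetic — you phrase the component comparison via uniqueness of the internal $A$–$B$ factorization and explicitly verify the internal direct product conditions at the end, whereas the paper writes elements as ordered pairs and leaves that last verification implicit.
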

\begin{proof}
    The inclusion $(Z_G(E)\cap A)\times (Z_G(E)\cap B )\subseteq Z_G(E)$ is obvious, so we move to analyze the other direction. Take $g\in Z_G(E)$. By the decomposition of $G$, we can write $g=g_Ag_B$, with $g_A\in A$, $g_B\in B$ and $[g_A,g_B]=1$. Similarly, for each $e\in E\subset G$, we can write $e=e_Ae_B=e_Be_A$ with $e_A\in A$ and $e_B\in B$. Now we write
    \[(g_Ae_A\,,\,g_Be_B)=ge=eg=(e_Ag_A\,,\,e_Bg_B).\]
    Thus, $g_Ae_A=e_Ag_A$. Moreover, since $g_A\in A$ and $e_B\in B$, $g_Ae_B=e_Bg_A$. Therefore, $g_A\in Z_G(E)\cap A$. In the same way, we obtain $g_B\in Z_G(E)\cap B$, which shows that $Z_G(E)\subseteq (Z_G(E)\cap A)\times( Z_G(E)\cap B)$, and the requested equality.
\end{proof}
\noindent We are now ready to show the main result of this section. The proof is based on the fact that if we can find in $\A[\Gamma]$ an element with infinite cyclic centralizer, then the Artin group is indecomposable.

\begin{proof}[Proof of Theorem \ref{SfinAindec}]
The proof of this theorem relies on Lemma \ref{centralizerinA}. Take $\theta$ to be the product of all the squared generators $\theta=\sigma_{s_1}^2\cdots \,\sigma_{s_n}^2$, and we know that $Z_{\A[\Gamma]}(\theta)=\langle \theta \rangle$. Suppose now that the Artin group $\A[\Gamma]$ can be decomposed as $\A[\Gamma]=H\times K$, with $H,K < \A[\Gamma]$. Then, by Lemma \ref{lemmadecompcentralizer}, we have that \[Z_{\A[\Gamma]}(\theta)=(Z_{\A[\Gamma]}(\theta)\cap H)\times (Z_{\A[\Gamma]}(\theta)\cap K)=\langle \theta \rangle\cong \mathbb{Z}.\]
We know that the infinite cyclic group $\mathbb{Z}$ is indecomposable, hence we obtain that either $Z_{\A[\Gamma]}(\theta)\cap H=\{\id\}$ or $Z_{\A[\Gamma]}(\theta)\cap K=\{\id\}$. Without loss of generality, suppose that $Z_{\A[\Gamma]}(\theta)\cap H=\{\id\}$ and that $Z_{\A[\Gamma]}(\theta)\cap K=Z_{\A[\Gamma]}(\theta)$, which means that $Z_{\A[\Gamma]}(\theta)=\langle \theta \rangle <K$. Since $[H,K]=\{\id\}$, then $H<Z_{\A[\Gamma]}(K)< Z_{\A[\Gamma]}(\theta)$, but then $H=\{\id\}$ because we assumed that $Z_{\A[\Gamma]}(\theta)\cap H=\{\id\}$. Therefore, if we write $\A[\Gamma]$ as a direct product of two subgroups $H\times K$, we find that one of these two must be trivial, meaning that the Artin group is indecomposable.
\end{proof}

\noindent With exactly the same proof, the more general following result holds.

\begin{lem}\label{lem_eltwithcycliccentralizer_Gindec}
    Let $G$ be a group, and let $g\in G$ be an element such that $Z_G(g)=\langle g\rangle$. Then $G$ is indecomposable.
\end{lem}

\subsection{Infinitely generated case}\label{infgencase}
We want now to extend Theorem \ref{SfinAindec} to some cases in which the Coxeter graph $\Gamma$ is $\infty$-connected but countable. The argument of finding the centralizer of a special element in $\A[\Gamma]$ that is the product of all the squared generators cannot be used here, because the number of generators is infinite. We suppose that the set of vertices $V(\Gamma)=S$ admits a filtration $X_0\subset X_1\subset\cdots\subset X_i\subset\cdots$ such that each $X_i$ is finite and $\Gamma_{X_i}$ is $\infty$-connected. We exploit then the subgroup $\CA[\Gamma]$ of the Artin group and the result of indecomposability of $\W[\Gamma]$ in \cite{Nui06}, when $\Gamma$ has an infinite set of vertices. \medskip\\
\noindent
Recall that given $\Gamma$ a Coxeter graph on a set of vertices $S$, the kernel of the group homomorphism $\omega:\A[\Gamma]\longrightarrow \W[\Gamma]$ sending $s\longmapsto s$ for each $s\in S$ is the colored Artin group $\CA[\Gamma]<\A[\Gamma]$.\medskip\\
\noindent The following lemma still relies on the description of the centralizer of some element in the group.
\begin{lem}\label{centralizerinCA}
    Let $\Gamma$ be a Coxeter graph on a finite set of vertices $S=\{s_1,\ldots,s_n\}$, with $\Gamma$ $\infty$-connected. Let $\theta$ be $\theta=\sigma_{s_1}^2\,\cdots \,\sigma_{s_n}^2$. Then, $\theta \in \CA[\Gamma]$ and the centralizer $Z_{\CA[\Gamma]}(\theta)$ is the infinite cyclic subgroup $\langle \theta \rangle$ generated by $\theta$.
\end{lem}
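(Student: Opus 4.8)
The plan is to reduce the statement directly to Lemma \ref{centralizerinA}, which already establishes the analogous fact inside the full Artin group $\A[\Gamma]$. The first thing to verify is that $\theta$ genuinely lies in $\CA[\Gamma]$. Applying the homomorphism $\omega:\A[\Gamma]\longrightarrow\W[\Gamma]$ gives $\omega(\theta)=s_1^2\cdots s_n^2=\id$, since every generator of a Coxeter group is an involution; hence $\theta\in\ker\omega=\CA[\Gamma]$. As $\CA[\Gamma]$ is a subgroup of $\A[\Gamma]$, it then also contains the whole cyclic subgroup $\langle\theta\rangle$.

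Next I would observe that the centralizer of $\theta$ taken inside $\CA[\Gamma]$ satisfies $Z_{\CA[\Gamma]}(\theta)=\CA[\Gamma]\cap Z_{\A[\Gamma]}(\theta)$: an element of $\CA[\Gamma]$ commutes with $\theta$ within $\CA[\Gamma]$ exactly when it commutes with $\theta$ within $\A[\Gamma]$, since the group operation is the same. By Lemma \ref{centralizerinA}, $Z_{\A[\Gamma]}(\theta)=\langle\theta\rangle$, so $Z_{\CA[\Gamma]}(\theta)=\CA[\Gamma]\cap\langle\theta\rangle$, and since we have just shown $\langle\theta\rangle\subseteq\CA[\Gamma]$, this intersection equals all of $\langle\theta\rangle$. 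Finally, $\theta\neq\id$ (which follows from Lemma \ref{centralizerinA}, or directly from Proposition \ref{propcarina} applied with $k=2$, $X=\emptyset$), so $\langle\theta\rangle\cong\mathbb{Z}$ is genuinely infinite cyclic, as claimed.

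There is essentially no obstacle left: the entire analytic difficulty — exploiting the amalgamated-product decompositions of $\A[\Gamma]$ coming from the $\infty$-connectedness of $\Gamma$ and the Bass--Serre-theoretic description of hyperbolic elements and their centralizers (Lemmas \ref{centralhype}, \ref{abhyperbolic}, \ref{centralizerInfEdge})—is already carried out in the proof of Lemma \ref{centralizerinA}. The only points requiring (trivial) care are that $\theta$ really is an element of the ambient group $\CA[\Gamma]$, so that the statement makes sense, and that passing to the subgroup $\CA[\Gamma]$ can only shrink, never enlarge, the centralizer; both are immediate.
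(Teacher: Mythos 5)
Your argument is correct and is essentially identical to the paper's own proof: both reduce immediately to Lemma \ref{centralizerinA} via the observation that $\theta\in\CA[\Gamma]$ and that $Z_{\CA[\Gamma]}(\theta)=\CA[\Gamma]\cap Z_{\A[\Gamma]}(\theta)=\langle\theta\rangle$. Your version merely spells out the containment $\langle\theta\rangle\subseteq\CA[\Gamma]$ and the nontriviality of $\theta$, which the paper leaves implicit.
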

\begin{proof}
    The fact that $\theta$ actually belongs to $\CA[\Gamma]$ is trivial. By Lemma \ref{centralizerinA} we know that $Z_{\A[\Gamma]}(\theta)=\langle \theta \rangle$. Since $\CA[\Gamma]< \A[\Gamma]$, we have that $Z_{\CA[\Gamma]}(\theta)\subset Z_{\A[\Gamma]}(\theta)$, from which the result follows.
\end{proof}

\noindent Using Lemma \ref{centralizerinCA} and Lemma \ref{lemmadecompcentralizer}, as we saw for the Artin group, we get the following result.
\begin{lem}\label{SfinCAindecomposable}
    Let $\Gamma$ be an $\infty$-connected Coxeter graph on a finite set of vertices $S=\{s_1,\ldots,s_n\}$. Then, $\CA[\Gamma]$ is indecomposable.
\end{lem}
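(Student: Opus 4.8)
The plan is to mimic verbatim the argument used for Theorem \ref{SfinAindec}, replacing $\A[\Gamma]$ by $\CA[\Gamma]$ throughout, since the preceding Lemma \ref{centralizerinCA} has already supplied the one ingredient that made that proof work. Concretely, I would set $\theta=\sigma_{s_1}^2\,\sigma_{s_2}^2\,\cdots\,\sigma_{s_n}^2$, which lies in $\CA[\Gamma]$ because $\omega(\theta)=s_1^2\cdots s_n^2=\id$, and invoke Lemma \ref{centralizerinCA} to get $Z_{\CA[\Gamma]}(\theta)=\langle\theta\rangle\cong\mathbb{Z}$.

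Next I would assume for contradiction a decomposition $\CA[\Gamma]=H\times K$ with $H,K<\CA[\Gamma]$ proper, and apply Lemma \ref{lemmadecompcentralizer} with $G=\CA[\Gamma]$ and $E=\{\theta\}$ to obtain
\[
Z_{\CA[\Gamma]}(\theta)=\bigl(Z_{\CA[\Gamma]}(\theta)\cap H\bigr)\times\bigl(Z_{\CA[\Gamma]}(\theta)\cap K\bigr)\cong\mathbb{Z}.
\]
Since $\mathbb{Z}$ is indecomposable, one of the two intersections is trivial; without loss of generality $Z_{\CA[\Gamma]}(\theta)\cap H=\{\id\}$, which forces $Z_{\CA[\Gamma]}(\theta)\cap K=Z_{\CA[\Gamma]}(\theta)=\langle\theta\rangle$, i.e.\ $\langle\theta\rangle<K$. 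Then, because $[H,K]=\{\id\}$, every element of $H$ commutes with $\theta\in K$, so $H<Z_{\CA[\Gamma]}(\theta)$; combined with $Z_{\CA[\Gamma]}(\theta)\cap H=\{\id\}$ this yields $H=\{\id\}$, contradicting properness. Hence no such decomposition exists and $\CA[\Gamma]$ is indecomposable.

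There is essentially no obstacle here: the only nontrivial input is the centralizer computation, and that is exactly the content of Lemma \ref{centralizerinCA}, which in turn rests on Lemma \ref{centralizerinA} together with the inclusion $Z_{\CA[\Gamma]}(\theta)\subseteq Z_{\A[\Gamma]}(\theta)$. The one point worth a sentence of care is simply recording why $\theta\in\CA[\Gamma]$ (so that it makes sense to take its centralizer inside $\CA[\Gamma]$), and noting that $\langle\theta\rangle$ really is infinite cyclic so that the indecomposability of $\mathbb{Z}$ applies; both are immediate.
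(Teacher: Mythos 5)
Your proposal is correct and is exactly the argument the paper intends: the paper does not even write out a separate proof, stating only that the result follows from Lemma \ref{centralizerinCA} and Lemma \ref{lemmadecompcentralizer} ``as we saw for the Artin group,'' which is precisely the Theorem~\ref{SfinAindec} argument transplanted to $\CA[\Gamma]$ that you carry out. No gaps.
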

\noindent Lemma \ref{centralizerinCA} also implies the following result.
\begin{lem}\label{centerCA}
     Let $\Gamma$ be a Coxeter graph on a finite set of vertices $S=\{s_1,\ldots,s_n\}$, with $n>1$ and $\Gamma$ $\infty$-connected. Then, the center of $\CA[\Gamma]$ and its centralizer in $\A[\Gamma]$ are trivial, namely $Z(\CA[\Gamma])=Z_{\A[\Gamma]}(\CA[\Gamma])=\{\id\}$.
\end{lem}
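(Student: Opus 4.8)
The statement to prove is Lemma \ref{centerCA}: for $\Gamma$ finite and $\infty$-connected, $Z(\CA[\Gamma]) = Z_{\A[\Gamma]}(\CA[\Gamma]) = \{\id\}$. My plan is to work with the stronger of the two claims, namely the triviality of the centralizer $Z_{\A[\Gamma]}(\CA[\Gamma])$ of $\CA[\Gamma]$ inside the full Artin group; the triviality of the center $Z(\CA[\Gamma])$ then follows at once, since $Z(\CA[\Gamma]) = \CA[\Gamma] \cap Z_{\A[\Gamma]}(\CA[\Gamma]) \subseteq Z_{\A[\Gamma]}(\CA[\Gamma]) = \{\id\}$.

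The key observation is that the element $\theta = \sigma_{s_1}^2\sigma_{s_2}^2\cdots\sigma_{s_n}^2$ of Lemma \ref{centralizerinCA} lies in $\CA[\Gamma]$. Therefore any $g \in Z_{\A[\Gamma]}(\CA[\Gamma])$ must in particular commute with $\theta$, i.e. $g \in Z_{\A[\Gamma]}(\theta)$. By Lemma \ref{centralizerinA}, $Z_{\A[\Gamma]}(\theta) = \langle \theta \rangle$, so $g = \theta^l$ for some $l \in \mathbb{Z}$. It remains to rule out $l \neq 0$. For this I would produce an element of $\CA[\Gamma]$ that does \emph{not} commute with $\theta^l$ when $l \neq 0$. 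A clean choice is a conjugate of $\theta$ itself: pick any $w \in \W[\Gamma]$ with $w \neq \id$ and consider a lift, or more concretely take $\theta' = \sigma_{s_2}^2\sigma_{s_3}^2\cdots\sigma_{s_n}^2\sigma_{s_1}^2$ — a cyclic rotation of the word defining $\theta$, which also lies in $\CA[\Gamma]$. If $\theta^l$ commuted with $\theta'$, then $\theta'$ would lie in $Z_{\A[\Gamma]}(\theta^l)$. Now $\theta^l$ is a hyperbolic element on the Bass--Serre tree associated with an amalgamated decomposition of $\A[\Gamma]$ along an $\infty$-edge (Remark \ref{powerofahyp}), so $Z_{\A[\Gamma]}(\theta^l)$ leaves the axis $\Lm_\theta$ invariant; retracing the proof of Lemma \ref{centralizerinA} essentially verbatim, one finds $Z_{\A[\Gamma]}(\theta^l) = \langle \theta \rangle$ for $l \neq 0$ as well. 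Hence $\theta' \in \langle\theta\rangle$, which contradicts Proposition \ref{propcarina}-type support considerations: the two words $\theta$ and $\theta'$ have the same image under $\omega$ but are distinct elements of $\CA[\Gamma]$, and $\langle\theta\rangle$ is abelian and infinite cyclic, so $\theta' = \theta^k$ forces (by comparing, say, abelianizations or the $\sigma_{s_1}$-exponent in a suitable homomorphism $\A[\Gamma]\to\mathbb{Z}$) $k = 1$ and $\theta' = \theta$, which is false unless $n = 1$.

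Actually the cleanest way to finish is more direct. Once we know $g = \theta^l$ for $g \in Z_{\A[\Gamma]}(\CA[\Gamma])$, recall that $\A[\Gamma]$ admits the homomorphism to $\mathbb{Z}$ sending every $\sigma_s \mapsto 1$; under this map $\theta \mapsto 2n$, so $\theta^l \mapsto 2nl$. If $l \neq 0$ then $\theta^l$ has infinite order and is not central in $\A[\Gamma]$ already — but we need that it fails to centralize all of $\CA[\Gamma]$, not all of $\A[\Gamma]$. So I will instead exhibit one explicit element of $\CA[\Gamma]$ not commuting with $\theta$: the element $\delta = \sigma_{s_1}\sigma_{s_2}^2\sigma_{s_3}^2\cdots\sigma_{s_n}^2\sigma_{s_1}^{-1}$, i.e. $\sigma_{s_1}\theta'\sigma_{s_1}^{-1}$-type conjugate, which lies in $\CA[\Gamma]$ since conjugation preserves the kernel of $\omega$ (as $\omega(\sigma_{s_1}) = s_1$ and $\omega(\theta) = \id$ give $\omega(\delta) = s_1 \cdot \id \cdot s_1 = \id$). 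Wait — more simply: $\delta := \sigma_{s_1}\theta\sigma_{s_1}^{-1} \in \CA[\Gamma]$ since $\omega(\delta) = s_1 \omega(\theta) s_1 = \id$. Then $\theta^l$ commutes with $\delta$ iff $\sigma_{s_1}^{-1}\theta^l\sigma_{s_1}$ commutes with $\theta$, iff $\sigma_{s_1} \in Z_{\A[\Gamma]}(\theta^l)$ after rearranging — and $\sigma_{s_1} \notin \langle\theta\rangle$ (look at the $\mathbb{Z}$-exponent: $1$ is not a multiple of $2n$), so $\sigma_{s_1}\notin Z_{\A[\Gamma]}(\theta^l)$, forcing $l = 0$ and hence $g = \id$.

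\textbf{Main obstacle.} The one technical point requiring care is that Lemma \ref{centralizerinA} as stated computes $Z_{\A[\Gamma]}(\theta)$, whereas the argument above also wants $Z_{\A[\Gamma]}(\theta^l) = \langle\theta\rangle$ for nonzero $l$. This is not automatic but is easy: by Remark \ref{powerofahyp}, $\theta^l$ is again hyperbolic on the same tree with the same axis $\Lm_\theta$, so Lemma \ref{centralhype} applies and the reflection-versus-translation dichotomy in the proof of Lemma \ref{centralizerInfEdge} goes through word for word (a reflection cannot conjugate the infinite-order translation $\theta|_{\Lm_\theta}$ to its inverse unless that translation has finite order). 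Then the intersection argument over all $\infty$-edges from the proof of Lemma \ref{centralizerinA} yields $Z_{\A[\Gamma]}(\theta^l) \subseteq \langle\theta\rangle$, and the reverse inclusion is clear. Alternatively — and this avoids reproving anything — observe directly that $g \in Z_{\A[\Gamma]}(\CA[\Gamma])$ in particular commutes with $\theta$, so $g = \theta^l$ by Lemma \ref{centralizerinA}; and $g$ also commutes with $\delta = \sigma_{s_1}\theta\sigma_{s_1}^{-1} \in \CA[\Gamma]$, so $\sigma_{s_1}^{-1}g\sigma_{s_1} = \theta^l$ as well (since $\sigma_{s_1}^{-1}g\sigma_{s_1}$ centralizes $\theta$, being conjugate of something centralizing $\delta$... ) — cleanest: $g$ and $\delta$ commute means $g$ and $\sigma_{s_1}\theta\sigma_{s_1}^{-1}$ commute, i.e.\ $\sigma_{s_1}^{-1}g\sigma_{s_1}$ and $\theta$ commute, so $\sigma_{s_1}^{-1}g\sigma_{s_1} = \theta^m$ by Lemma \ref{centralizerinA}; thus $\sigma_{s_1}^{-1}\theta^l\sigma_{s_1} = \theta^m$, and applying $\omega$ or the exponent-sum homomorphism gives $l = m$, so $\sigma_{s_1}$ commutes with $\theta^l$, whence $\sigma_{s_1} \in Z_{\A[\Gamma]}(\theta^l) = Z_{\A[\Gamma]}(\theta) = \langle\theta\rangle$ if $l \neq 0$ — contradiction by exponent sums. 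Hence $l = 0$, $g = \id$, and $Z_{\A[\Gamma]}(\CA[\Gamma]) = \{\id\}$, giving $Z(\CA[\Gamma]) = \{\id\}$ as well.
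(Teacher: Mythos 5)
Your proposal is correct, and it opens exactly as the paper does: since $\theta=\sigma_{s_1}^2\cdots\sigma_{s_n}^2\in\CA[\Gamma]$, Lemma \ref{centralizerinA} forces $Z_{\A[\Gamma]}(\CA[\Gamma])\subseteq Z_{\A[\Gamma]}(\theta)=\langle\theta\rangle$, and the triviality of $Z(\CA[\Gamma])$ follows from that of the centralizer. Where you diverge is in ruling out $g=\theta^l$ with $l\neq 0$. The paper picks the witness $\sigma_{s_1}^2\in\CA[\Gamma]$ and shows directly that $\theta^l\sigma_{s_1}^2\neq\sigma_{s_1}^2\theta^l$ by rewriting the hypothetical equality as two strictly alternating $(A_1\setminus C_{1j})\sqcup(B_j\setminus C_{1j})$-products of the same element beginning with factors of different types, contradicting the uniqueness of the normal form (Theorem \ref{normalformamalgam} and Lemma \ref{alternformamalg}); no further centralizer computation is needed. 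You instead take the witness $\delta=\sigma_{s_1}\theta\sigma_{s_1}^{-1}\in\CA[\Gamma]$, use Lemma \ref{centralizerinA} plus exponent sums to conclude that $\sigma_{s_1}$ would have to centralize $\theta^l$, and then need the strengthened statement $Z_{\A[\Gamma]}(\theta^l)=\langle\theta\rangle$ for all $l\neq 0$ — which, as you correctly identify, is not literally Lemma \ref{centralizerinA} and requires rerunning the Bass--Serre argument for $\theta^l$ (same axis since $\Lm_{\theta^l}=\Lm_{\theta}$, the reflection case excluded identically, the intersection over $\infty$-edges unchanged). That extension is genuinely true and your sketch of it is sound, so your argument closes; it is just slightly heavier than the paper's, which gets away with a single application of the normal form theorem in one amalgam rather than a re-derivation of the full centralizer computation. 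If you write this up, prove the statement about $Z_{\A[\Gamma]}(\theta^l)$ explicitly (including the standard fact that a hyperbolic isometry of a tree and its nonzero powers share the same axis) rather than leaving it at the level of a remark, and trim the exploratory false starts from the middle of the argument.
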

\begin{proof}
    Define again $\theta= \sigma_{s_1}^2\,\cdots \,\sigma_{s_n}^2\in \CA[\Gamma]$. Clearly we must have \[Z_{\A[\Gamma]}(\CA[\Gamma])\subseteq Z_{\A[\Gamma]}(\theta)=\langle \theta \rangle.\]
    To complete the proof, we need to show that $\theta^l \notin Z_{\A[\Gamma]}(\CA[\Gamma])$ for any $l\in \mathbb{Z}\backslash\{0\}$.\medskip \\ 
   \noindent The infinity connection of $\Gamma$ ensures that we can express $\A[\Gamma]$ as an amalgamated product, as in the proof of Lemma \ref{centralizerinA}. For instance, set $i=1$, and let $j=j(1)\in \{2,\ldots,n\}$ be such that $m_{s_1,s_j}=\infty$. Define $\mathcal{A}_1=S\backslash \{s_j\}$, $\mathcal{B}_j=S\backslash \{s_1\}$, and $\mathcal{C}_{1j}=S\backslash \{s_1,s_j\}$. Then $\A[\Gamma]=A_1\,\frpp_{C_{1j}}\,B_j$, with the same notations as in the proof of Lemma \ref{centralhype}. Now, express $\theta=\sigma_{s_1}^2\;\cdots\,\sigma_{ s_n}^2$, as $a_1b_j$, where $a_1=\sigma_{s_1}^2\in A_1$, and $b_j=\sigma_{s_2}^2\,\cdots\, \sigma_{s_n}^2\in B_j$. By Proposition \ref{propcarina}, we know that $a_1\notin C_{1j}$ and $b_j\notin C_{1j}$. \medskip\\ 
    \noindent Now, choose two transversals $T_{A_1}$ and $T_{B_j}$ of $C_{1j}$ in $A_1$ and $B_j$, respectively, such that $a_1\in T_{A_1}$ and $b_j\in T_{B_j}$. 
    Since $a_1,b_j\notin C_{1j}$, we know that $a_1\in T_{A_1}\backslash \{\id\}$ and that $b_j\in T_{B_j}\backslash \{\id\}$. \medskip \\
   \noindent Suppose that $\theta^l\in Z_{\A[\Gamma]}(\CA[\Gamma])$, and assume for the moment that $l\geq 1$. Then $\theta^l$ must commute with $\sigma_{s_1}^2=a_1\in \CA[\Gamma]$. Namely, \begin{equation}\label{commutaconsigmauno}
   \sigma_{s_1}^2 \,\theta^l = \theta^l\, \sigma_{s_1}^2 \qquad \Longleftrightarrow \qquad a_1 \,\theta^l = \theta^l\, a_1 = a_1b_j \,\theta^{l-1}\,a_1\qquad \Longleftrightarrow \qquad \theta^l =b_j\, \theta^{l-1}\, a_1.
    \end{equation}
    \noindent This leads to the expression
    \[
    \underbrace{a_1\, b_j\, a_1\,\cdots b_j}_{2l \mbox{\;\small{elements}}}=
    \underbrace{ b_j\, a_1\,b_j\,\cdots a_1}_{2l \mbox{\;\small{elements}}}.
    \]
    \noindent where both sides represent expressions of the element $\theta^l\sigma_{s_1}^2$ as strictly alternating $(T_{A_1}\backslash \{\id\})\cup (T_{B_j}
        \backslash\{\id\})$-product. By the uniqueness of the normal form in an amalgamated product (see Theorem \ref{normalformamalgam} and Lemma \ref{alternformamalg}), we obtain that Equation \ref{commutaconsigmauno} cannot hold for any $l\geq 1$. If $l<0$, $\theta^l$ cannot commute with $\sigma_{s_1}^2$, since otherwise $\theta^{-l}$ would also commute with $\sigma_{s_1}^2$, contradicting the previous result.\medskip \\
        \noindent Therefore, we conclude that $Z(\CA[\Gamma])=Z_{\A[\Gamma]}(\CA[\Gamma])=\{\id\}$.
\end{proof}

\noindent The fact that the colored Artin group is indecomposable when $\Gamma$ is $\infty$-connected for a finite graph, will allow us to generalize the indecomposability also for the infinitely generated case. 
\noindent
Now, suppose that $\Gamma$ is a $\infty$-connected Coxeter graph on an infinite countable set $S$, and assume that $S$ admits a filtration $X_0\subset X_1\subset\cdots\subset X_i\subset\cdots$ such that each $X_i$ is finite and each induced subgraph $\Gamma_{X_i}$ is $\infty$-connected. Consequently, $\Gamma$ can be expressed as the union $\Gamma=\bigcup_{i=1}^{\infty}\Gamma_i$ of the subgraphs $\Gamma_i:=\Gamma_{X_i}$, and $S=\bigcup_{i=1}^{\infty}X_i$. We recall that 
\begin{align*}
     \A[\Gamma]\;\;=\, \underset{X_i}{\varinjlim }\,\,\A[\Gamma_{X_i}]; \qquad \text{ and }\qquad \CA[\Gamma]\;\;=\, \underset{X_i}{\varinjlim }\,\,\CA[\Gamma_{X_i}];.
\end{align*}
In this setting we can extend the result of indecomposability to the infinitely generated case.

\begin{lem}\label{SinfCAindec}
    Let $\Gamma$ be an infinite Coxeter graph with vertex set $S$ that admits a filtration $X_0\subset X_1\subset\cdots\subset X_i\subset\cdots$ such that $X_i$ is a finite set for all $i<\infty$ and $S=V(\Gamma)=\bigcup_{i=1}^{\infty}X_i$. Suppose that $\Gamma$ and $\Gamma_i:=\Gamma_{X_i}$ are $\infty$-connected for all $i<\infty$. Then, the colored Artin group $\CA[\Gamma]$ is indecomposable.
\end{lem}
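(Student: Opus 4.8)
The plan is to mimic the centralizer argument that worked in the finite case (Lemma \ref{SfinCAindecomposable}), but since there is no single element that is ``the product of all squared generators'', I will instead look at the whole family of such elements along the filtration and study their joint centralizer. Concretely, for each $i$ set $\theta_i = \prod_{s\in X_i}\sigma_s^2 \in \CA[\Gamma_{X_i}] \subset \CA[\Gamma]$ (fixing some ordering of $X_i$), and consider the subset $E=\{\theta_i \mid i\geq 0\}$ of $\CA[\Gamma]$. The first step is to compute $Z_{\CA[\Gamma]}(E)$. An element $g\in\CA[\Gamma]$ has finite support, so $g\in\CA[\Gamma_{X_N}]$ for some $N$; if $g$ commutes with $\theta_N$ then, using that $\CA[\Gamma_{X_N}]$ sits inside $\CA[\Gamma_{X_m}]$ compatibly for all $m\geq N$ and that the retractions $p_{X_N}\colon\CA[\Gamma_{X_m}]\to\CA[\Gamma_{X_N}]$ are the identity on $\CA[\Gamma_{X_N}]$, I expect to reduce to Lemma \ref{centralizerinCA} applied inside $\CA[\Gamma_{X_N}]$ and conclude $g\in\langle\theta_N\rangle$. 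So $Z_{\CA[\Gamma]}(E)$ is contained in $\bigcup_N \langle\theta_N\rangle$.

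The second step is to pin down $Z_{\CA[\Gamma]}(E)$ exactly: if $g=\theta_N^{l}$ with $l\neq 0$, does it commute with every $\theta_i$? For $i\leq N$ this is automatic since all the $\theta_i$ lie in the abelian-ish skeleton... no: they need not commute with each other, so this requires care. Actually the cleanest route is: take $g\in Z_{\CA[\Gamma]}(E)$, so $g\in\langle\theta_N\rangle$ for the $N$ with $g\in\CA[\Gamma_{X_N}]$, say $g=\theta_N^l$. Now $g$ must also commute with $\theta_{N+1}$. Inside $\CA[\Gamma_{X_{N+1}}]$ we have $Z(\theta_{N+1})=\langle\theta_{N+1}\rangle$ by Lemma \ref{centralizerinCA}, hence $\theta_N^l\in\langle\theta_{N+1}\rangle$. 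Applying the retraction $p_{X_N}$ (a homomorphism on colored Artin groups, identity on $\CA[\Gamma_{X_N}]$) to an equation $\theta_N^l=\theta_{N+1}^k$ gives $\theta_N^l = p_{X_N}(\theta_{N+1})^k = \theta_N^k$ (since $p_{X_N}$ kills the squared generators of $X_{N+1}\setminus X_N$ and fixes the rest), forcing $l=k$; but then $\theta_{N+1}^l\in\CA[\Gamma_{X_N}]$, which by Proposition \ref{propcarina} forces $l=0$. Hence $Z_{\CA[\Gamma]}(E)=\{\id\}$.

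Now for indecomposability: suppose $\CA[\Gamma]=H\times K$ with $H,K$ proper. By Lemma \ref{lemmadecompcentralizer}, $Z_{\CA[\Gamma]}(E)=(Z_{\CA[\Gamma]}(E)\cap H)\times(Z_{\CA[\Gamma]}(E)\cap K)=\{\id\}$, which is of course vacuously satisfied, so the trivial centralizer alone is not enough — I need the stronger ``no decomposition is compatible with the filtration'' idea. The fix is the standard one for direct limits: write each $g\in\CA[\Gamma]$ as $g=hk$ with $h\in H$, $k\in K$, $[h,k]=1$; apply this to $g=\sigma_s^2$ for a single $s$. Fix $s\in S$ and pick $i$ large with $s\in X_i$; the element $\theta_i'=\prod_{t\in X_i}\sigma_t^2$ commutes with $\sigma_s^2$ inside $\CA[\Gamma_{X_i}]$? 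No, it does not in general. Instead I will argue as follows: for any $g\in\CA[\Gamma]$, write $g=hk$ and choose $N$ so large that $g,h,k\in\CA[\Gamma_{X_N}]$ (possible since each has finite support). Then $\CA[\Gamma_{X_N}]\supseteq \langle h\rangle\times\langle k\rangle$-ish is not automatic, but: $\theta_N\in\CA[\Gamma_{X_N}]$ and $\theta_N$ commutes with $h$ iff ... hmm. Let me restructure: the key point I will actually prove is that $H$ and $K$ cannot \emph{both} contain an element outside every $\CA[\Gamma_{X_i}]$... this is getting complicated.

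The main obstacle — and I will flag it as such — is precisely this passage from the finite case to the limit: a direct-limit group can be a nontrivial direct product even when no finite stage is (e.g.\ $\bigoplus_n\mathbb{Z}=\mathbb{Z}\oplus\bigoplus_{n\geq 2}\mathbb{Z}$ with the finite stages being $\mathbb{Z}^n$, each indecomposable — wait, $\mathbb{Z}^n$ \emph{is} decomposable). The honest plan is: given $\CA[\Gamma]=H\times K$, for each $i$ let $H_i=H\cap\CA[\Gamma_{X_i}]$ and $K_i=K\cap\CA[\Gamma_{X_i}]$; one does \emph{not} get $\CA[\Gamma_{X_i}]=H_i\times K_i$ in general, so I cannot directly invoke Lemma \ref{SfinCAindecomposable}. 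Instead I will use the triviality of $Z_{\CA[\Gamma]}(E)$ together with the fact that $E\subset\CA[\Gamma]$ generates a subgroup whose elements exhaust all ``corners'' of the filtration: writing the projection $\pi_H\colon\CA[\Gamma]\to H$, each $\theta_i$ maps to $\pi_H(\theta_i)$ which commutes with $K$, and since $\theta_i$ commutes with $\theta_j$ for... no. I will ultimately present the argument in the form: if $H\neq\{\id\}$ pick $1\neq h\in H$; then $h\in\CA[\Gamma_{X_N}]$ for some $N$, and every element of $K$ commutes with $h$, so $K\subseteq Z_{\CA[\Gamma]}(h)$; choosing any $1\neq k\in K$ likewise gives $H\subseteq Z_{\CA[\Gamma]}(k)$. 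The contradiction will come by showing $Z_{\CA[\Gamma]}(h)\cap Z_{\CA[\Gamma]}(k)$ cannot simultaneously be ``large'' — more precisely, I will choose $h$ and $k$ to lie in a common $\CA[\Gamma_{X_N}]$ (enlarge $N$), note $\CA[\Gamma_{X_N}]$ is indecomposable by Lemma \ref{SfinCAindecomposable} so that $h,k$ cannot generate complementary factors there, and leverage $Z_{\CA[\Gamma]}(E)=\{\id\}$ via the retractions $p_{X_N}$ to push any hypothetical splitting down to $\CA[\Gamma_{X_N}]$, contradicting its indecomposability. I expect the technical heart to be verifying that $p_{X_N}$ restricted to $\CA[\Gamma]$ carries a global direct-product decomposition to one of $\CA[\Gamma_{X_N}]$, which should follow because $p_{X_N}$ is a retraction (identity on the subgroup) and a homomorphism on colored Artin groups, so $\CA[\Gamma_{X_N}]=p_{X_N}(H)\cdot p_{X_N}(K)$ with the two factors commuting, and one then checks the product is direct and both factors proper for suitable $N$.
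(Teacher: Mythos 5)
After several detours (the joint centralizer $Z_{\CA[\Gamma]}(E)$ computed in your first two paragraphs is never actually used), your final paragraph lands on exactly the paper's argument: pick nontrivial $h\in H$ and $k\in K$ lying in a common $\CA[\Gamma_{X_N}]$, push the decomposition down via the retraction homomorphism $p_{X_N}\colon\CA[\Gamma]\to\CA[\Gamma_{X_N}]$, and contradict the indecomposability of $\CA[\Gamma_{X_N}]$ from Lemma \ref{SfinCAindecomposable}, using that $p_{X_N}$ fixes $h$ and $k$. The one step you leave as ``one then checks the product is direct'' is supplied precisely by Lemma \ref{centerCA}: the intersection $p_{X_N}(H)\cap p_{X_N}(K)$ commutes with both factors, hence with all of $\CA[\Gamma_{X_N}]=p_{X_N}(H)\cdot p_{X_N}(K)$, hence lies in $Z(\CA[\Gamma_{X_N}])=\{\id\}$.
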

\begin{proof}
    Take any $i<\infty$, and consider the finite, $\infty$-connected Coxeter graph $\Gamma_i$ on the finite set $X_i$. Take now the group homomorphism induced by the construction of Godelle and Paris (see Subsection \ref{parabolic}), $p_{X_i}:\CA[\Gamma]\longrightarrow \CA[\Gamma_i]$. As we said in Remark \ref{godelleparisforinfinity}, this group homomorphism is well defined even if $S$ is infinite, and its restriction to $\CA[\Gamma_i]$ is the identity. \medskip
    \\
    \noindent Suppose that the colored Artin group can be written as a direct product of subgroups as $\CA[\Gamma]=H\times K$. We want to prove that one of these two direct factors is trivial. Suppose that neither one of the two is trivial, so there exist two elements $h\in H$ and $k\in K$, with $h\neq \id \neq k$. Take $g=hk\in \CA[\Gamma]$. Since $\CA[\Gamma]\,=\;{\varinjlim}_{i\in \mathbb{N}}\;\;\CA[\Gamma_i]$, we can always choose some $i<\infty$ such that $h,k \in \CA[\Gamma_i]<\CA[\Gamma]$, where $X_i$ is a finite set. Thus, $g=hk$ also belongs to $\CA[\Gamma_i]$. Consider now the homomorphism
    \[p_{X_i}: \CA[\Gamma]=H\times K\longrightarrow \CA[\Gamma_i].\]
    Since it respects the commutation, we have that $p_{X_i}(H)$ must commute with $p_{X_i}(K)$. The intersection of these two groups must commute with both $p_{X_i}(H)$ and $p_{X_i}(K)$, then, it must be contained in the center of $\CA[\Gamma_i]$. By Lemma \ref{centerCA}, we know that, since $\Gamma_i$ is finite and $\infty$-connected, $Z(\CA[\Gamma_i])=\{\id\}$. Thus, $p_{X_i}(H)\cap p_{X_i}(K)=\{\id\}$ and we have a decomposition of $\CA[\Gamma_i]$ as $p_{X_i}(H)\times p_{X_i}(K)$. But, by Lemma \ref{SfinCAindecomposable}, we have that $\CA[\Gamma_i]$ is indecomposable, so it must be either $p_{X_i}(H)=\{\id\}$ or $p_{X_i}(K)=\{\id\}$. But we supposed that both $h,k$ belonged to $\CA[\Gamma_i]$, and since $p_{X_{i}}$ is the identity restricted to $\CA[\Gamma_i]$, we find out that $p_{X_i}(h)=h\neq \id$ and $p_{X_i}(k)=k\neq \id$. We have then a contradiction, which means that either $H$ or $K$ is trivial and that $\CA[\Gamma]$ is indecomposable.
\end{proof}
\noindent
This lemma will be a key step to prove that indeed the Artin group $\A[\Gamma]$ is indecomposable. Before doing so, we show a preliminary result.

\begin{lem}
    Let $\Gamma$ be as in Lemma \ref{SinfCAindec}. Then, the centralizer $Z_{\A[\Gamma]}(\CA[\Gamma])=\{\id\}$.
\end{lem}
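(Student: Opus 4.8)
The plan is to reduce immediately to the finite case already handled in Lemma \ref{centerCA}, using only that every element of $\A[\Gamma]$ has finite support. First I would fix an arbitrary $g\in Z_{\A[\Gamma]}(\CA[\Gamma])$. Since $g$ is represented by a word in finitely many of the generators $\sigma_s^{\pm1}$, it lies in the standard parabolic subgroup $\A[\Gamma_Y]$ for some finite $Y\subseteq S$; because the $X_i$ form a chain with $\bigcup_i X_i=S$, there is an index $i$ with $Y\subseteq X_i$, and hence $g\in\A[\Gamma_{X_i}]$, the inclusion $\A[\Gamma_Y]\hookrightarrow\A[\Gamma_{X_i}]$ being the injection recalled from Van der Lek in Section \ref{preliminaries}.

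Next I would observe that $\CA[\Gamma_{X_i}]$ sits inside $\CA[\Gamma]$. Indeed the restriction of $\omega:\A[\Gamma]\longrightarrow\W[\Gamma]$ to $\A[\Gamma_{X_i}]$ factors as $\A[\Gamma_{X_i}]\longrightarrow\W[\Gamma_{X_i}]\hookrightarrow\W[\Gamma]$, where the first map is the defining surjection $\omega$ for $\Gamma_{X_i}$ and the second is the (injective) natural homomorphism of Coxeter groups; hence $\CA[\Gamma_{X_i}]=\A[\Gamma_{X_i}]\cap\CA[\Gamma]$, and in particular $\CA[\Gamma_{X_i}]\subseteq\CA[\Gamma]$. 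Since $g$ commutes with every element of $\CA[\Gamma]$, it commutes in particular with every element of $\CA[\Gamma_{X_i}]$; together with $g\in\A[\Gamma_{X_i}]$ this gives $g\in Z_{\A[\Gamma_{X_i}]}\bigl(\CA[\Gamma_{X_i}]\bigr)$.

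Finally I would invoke Lemma \ref{centerCA} for the finite, $\infty$-connected Coxeter graph $\Gamma_{X_i}$, which yields $Z_{\A[\Gamma_{X_i}]}(\CA[\Gamma_{X_i}])=\{\id\}$; therefore $g=\id$, and since $g$ was arbitrary, $Z_{\A[\Gamma]}(\CA[\Gamma])=\{\id\}$.

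There is essentially no serious obstacle here: the argument is a direct limit/support reduction feeding into the already-established finite statement. The only points that need care are the two standard facts used implicitly — that every element of $\A[\Gamma]$ lies in a finitely generated standard parabolic subgroup, and that the natural maps between parabolic subgroups of $\A[\Gamma]$ and $\W[\Gamma]$ are injective — so that the identification $\CA[\Gamma_{X_i}]=\A[\Gamma_{X_i}]\cap\CA[\Gamma]$ is legitimate; both are recorded in Section \ref{preliminaries}. (Note that the full $\infty$-connectedness of $\Gamma$ itself is not needed for this lemma; only the finiteness and $\infty$-connectedness of each $\Gamma_{X_i}$ is used, exactly as in Lemma \ref{SinfCAindec}.)
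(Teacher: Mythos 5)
Your proof is correct and follows essentially the same route as the paper: reduce to a finite $\infty$-connected piece $\Gamma_{X_i}$ via the direct-limit/finite-support structure of $\A[\Gamma]$, note that $g$ then centralizes $\CA[\Gamma_{X_i}]\subseteq\CA[\Gamma]$, and conclude by Lemma \ref{centerCA}. The identification $\CA[\Gamma_{X_i}]=\A[\Gamma_{X_i}]\cap\CA[\Gamma]$ that you spell out is exactly the standard fact the paper uses implicitly (and explicitly in the proof of Proposition \ref{propcarina}).
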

\begin{proof}
    Take an element $g\in Z_{\A[\Gamma]}(\CA[\Gamma])$. Since $\A[\Gamma]\,=\;{\varinjlim}_{i\in \mathbb{N}}\;\;\A[\Gamma_i]$, we can find some $i<\infty$ such $g\in \A[\Gamma_i]$. If $g$ commutes with all the elements of $\CA[\Gamma]$, then it must commute in particular with the elements in $\CA[\Gamma_i]$. Since $\Gamma_i$ is finite and $\infty$-connected, we can apply Lemma \ref{centerCA} and we obtain that $g\in Z_{\A[\Gamma_i]}(\CA[\Gamma_i])=\{\id\}$. Thus, $Z_{\A[\Gamma]}(\CA[\Gamma])$ cannot contain nontrivial elements, and we get $Z_{\A[\Gamma]}(\CA[\Gamma])=\{\id\}$.
\end{proof}
\noindent 
The indecomposability for Coxeter groups has been studied by Paris in \cite{Par07}. In this article, the author shows the following result for $\Gamma$ a finite Coxeter graph:
\begin{thm}[Theorem 4.1 in \cite{Par07}]\label{decompW}
Let $\Gamma$ be a connected Coxeter graph that is not of spherical type. Then the Coxeter group $\W[\Gamma]$ is indecomposable.
\end{thm}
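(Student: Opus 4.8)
\noindent The statement is quoted from \cite{Par07}, so rather than a fresh argument I would reconstruct it along the very lines that this paper already uses on the Artin side (compare the proof of Theorem \ref{SfinAindec}): exhibit an element of $W := \W[\Gamma]$ whose centralizer is infinite cyclic, and then conclude by Lemma \ref{lemmadecompcentralizer}. The first step is elementary. Since $\Gamma$ is connected and not of spherical type, $W$ is an infinite irreducible Coxeter group, and by the classical structure theory of such groups (Bourbaki, Ch.~V) one has $Z(W) = \{\id\}$ and $W$ contains elements of infinite order. The element playing the role of the $\theta$ of Lemma \ref{centralizerinA} is a \emph{Coxeter element} $c = s_1 s_2 \cdots s_n$; here, unlike in the Artin group, the generators are involutions, so one uses $c$ itself rather than a product of squares.

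\smallskip
\noindent Two facts about $c$ then have to be established. First, $c$ has infinite order: viewing $c$ through the canonical linear representation $\rho$ on $V/V^{\perp}$, the hypothesis that $\Gamma$ is not of spherical type prevents $\rho(c)$ from being of finite order (its spectral radius is strictly larger than $1$ in the indefinite non-affine case, while in the affine case $\rho(c)$ has a nontrivial Jordan block at the eigenvalue $1$). Second --- and this is the heart of the matter --- $Z_W(c) = \langle c \rangle$. This is the Coxeter-group analogue of Lemma \ref{centralizerinA}, with Bass--Serre theory replaced by the geometry of infinite Coxeter groups: $c$ admits a canonical invariant geodesic axis in the $\mathrm{CAT}(0)$ Davis complex of $W$ along which it acts as a translation, any $g \in Z_W(c)$ must preserve this axis and induce the same translation on it, and since nothing in $W$ other than $\{\id\}$ fixes the axis pointwise this forces $g \in \langle c \rangle$; Paris's result that a Coxeter element lies in no proper parabolic subgroup (\cite[Theorem 3.1]{Par07}) intervenes to eliminate the degenerate centralizing elements. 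I expect essentially all the difficulty to lie in this centralizer computation, which genuinely uses the fine structure of infinite Coxeter groups. A possible alternative route --- deriving a contradiction from a direct product decomposition against the irreducibility of the $W$-module $V/V^{\perp}$ --- looks more delicate, since a generator $s$ may split diagonally as $s = ab$ with both $a$ and $b$ nontrivial.

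\smallskip
\noindent Granting $Z_W(c) = \langle c \rangle \cong \mathbb{Z}$, the conclusion is immediate and identical to the proof of Theorem \ref{SfinAindec}. Suppose $W = A \times B$ with $A, B < W$. By Lemma \ref{lemmadecompcentralizer}, $\langle c \rangle = Z_W(c) = (Z_W(c) \cap A) \times (Z_W(c) \cap B)$; since $\mathbb{Z}$ is indecomposable, one of the two intersections is trivial, say $Z_W(c) \cap A = \{\id\}$, so $\langle c \rangle < B$. Then $A < Z_W(B) < Z_W(c) = \langle c \rangle < B$, whence $A < A \cap B = \{\id\}$, i.e.\ $A$ is trivial. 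Hence $W$ is indecomposable. (The infinitely generated case, which is not part of the statement quoted here, is handled instead by Nuida's theorem \cite{Nui06}, which is why the two results are combined in the corollary recalled earlier.)
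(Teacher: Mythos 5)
The paper does not prove this statement: it is quoted verbatim from \cite{Par07} (Theorem 4.1), and the only internal commentary is that Paris's argument hinges on Coxeter elements being \emph{essential}, i.e.\ lying in no proper parabolic subgroup. So you are right that a citation suffices here. The reconstruction you offer, however, does not stand on its own, and it is not the argument of \cite{Par07}. Your endgame is fine: granting $Z_{\W[\Gamma]}(c)\cong\mathbb{Z}$ for a Coxeter element $c$, Lemma \ref{lemmadecompcentralizer} yields indecomposability exactly as in the proof of Theorem \ref{SfinAindec}. But that centralizer claim is the entire content of the proof, and your justification for it has a genuine gap. For a hyperbolic isometry $c$ of a $\mathrm{CAT}(0)$ space, the centralizer preserves the full minimal set $\mathrm{Min}(c)\cong Y\times\mathbb{R}$, not a single chosen axis; showing that $Y$ is a point (equivalently, that $c$ has a unique axis) is precisely the hard step, and it fails for general hyperbolic isometries (consider $\mathbb{Z}^2$ acting on $\mathbb{R}^2$). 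In the affine case the Davis complex is Euclidean and $c$ acts as a glide isometry; one must actually verify that the linear part of $c$ has one-dimensional fixed space \emph{and} that the glide axis is not a wall of the arrangement --- otherwise the reflection in that axis would be an involution of $Z_{\W[\Gamma]}(c)$ outside $\langle c\rangle$, giving $Z_{\W[\Gamma]}(c)\cong\mathbb{Z}\times\mathbb{Z}_2$, and then Lemma \ref{lemmadecompcentralizer} no longer forces one factor to die. Neither point is addressed, and the appeal to \cite[Theorem 3.1]{Par07} ``to eliminate degenerate centralizing elements'' is not a substitute: essentiality concerns parabolic closures and does not by itself control $\mathrm{Min}(c)$.

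Be aware also that $Z_{\W[\Gamma]}(c)=\langle c\rangle$ is not a statement proved in \cite{Par07}; what the literature gives in general (Krammer's work on essential elements, in the non-affine case) is the weaker assertion that $\langle c\rangle$ has finite index in its centralizer, which is not enough for your final step, since a virtually cyclic group such as $\mathbb{Z}\times\mathbb{Z}_2$ is decomposable. Paris's proof proceeds instead through essentiality and the structure of parabolic closures of commuting elements, which is why the paper's discussion singles out that ingredient and why the finitely generated hypothesis is needed (Coxeter elements do not exist otherwise, whence the appeal to \cite{Nui06} for the infinite-rank case). If you wish to retain your scheme, you must either cite the theorem as the paper does, or supply a complete proof that the centralizer of a Coxeter element is infinite cyclic; as it stands, that key step is asserted rather than established.
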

\noindent Moreover, by the same article, we know that if $\Gamma$ is of spherical type and its center is not trivial, then $\W[\Gamma]$ admits a well-described decomposition. However, in the cited work, the proof of indecomposability for $\W[\Gamma]$ when $\Gamma$ is non-spherical deeply relies on the finiteness of the generating set. Specifically, it uses that the Coxeter element (which is the product of all the generators in some order) is \textit{essential}, which means that it does not belong to any proper parabolic subgroup. Since for an infinite number of generators the Coxeter element is not well defined, the proof shown of Theorem 4.1 in \cite{Par07} cannot be readapted to the infinite case. The result is true anyway, and it has been shown by Nuida in \cite{Nui06}.

\begin{thm}[Theorem 3.3 in \cite{Nui06}]\label{Wnonsphindec}
Let $\Gamma$ be a connected Coxeter graph that is not of spherical type, of arbitrary rank. Then the Coxeter group $\W[\Gamma]$ is indecomposable.
\end{thm}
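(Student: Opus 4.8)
The plan is to argue by contradiction. Suppose $W := \W[\Gamma] = H \times K$ with $H$ and $K$ both nontrivial, and let $\pi_H\colon W \to H$ and $\pi_K\colon W \to K$ be the two projections. The first step is to record two structural facts that follow from $\Gamma$ being connected and not of spherical type: $W$ is infinite, and $Z(W) = \{\id\}$. For the latter, a central element commutes with every reflection of $W$ and therefore acts as a scalar on the canonical linear representation $\rho$; for an irreducible non-spherical $\Gamma$ the only such scalar realised inside $\rho(W)$ is $1$, so the element is trivial (see \cite{Hump}, and \cite{Nui06} for the arbitrary-rank case). Combined with Lemma~\ref{lemmadecompcentralizer}, triviality of the center gives $Z_W(K) = H \times Z(K)$ and $Z_W(H) = Z(H) \times K$, which already pins down how any decomposition must interact with centralizers.

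When $S$ is finite this is exactly the argument of Paris recalled just above the statement (\cite[Theorem~4.1]{Par07}): one takes a Coxeter element $c = s_1 \cdots s_n$, which is \emph{essential} (contained in no proper parabolic subgroup) and satisfies $Z_W(c) = \langle c \rangle \cong \mathbb{Z}$ for non-spherical irreducible $\Gamma$. By Lemma~\ref{lemmadecompcentralizer}, $\langle c \rangle = (\langle c \rangle \cap H) \times (\langle c \rangle \cap K)$; since $\mathbb{Z}$ is indecomposable, $c$ lies in a single factor, say $c \in H$, and then $K$ centralizes $H \ni c$, so $K \subseteq Z_W(c) = \langle c \rangle \subseteq H$ and $K = \{\id\}$. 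This is the same mechanism as in the proof of Theorem~\ref{SfinAindec}, with $c$ playing the role of the element $\theta$.

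The real difficulty is the arbitrary-rank case, where this mechanism breaks down: a connected non-spherical $\Gamma$ need not contain any finite connected non-spherical full subgraph (e.g.\ $\Gamma = A_\infty$), so there is no Coxeter element and no finite parabolic to pass the problem down to, and direct indecomposability is not preserved under direct limits. Worse, the filtration-and-retraction device used for colored Artin groups in Lemma~\ref{SinfCAindec} has no analogue here, because in general there is no homomorphic retraction $\W[\Gamma] \to \W[\Gamma_X]$ onto a standard parabolic — already $W(A_3) = \mathfrak{S}_4$ has no quotient isomorphic to $W_{\{s_1,s_3\}} \cong (\mathbb{Z}/2)^2$. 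So the argument must be carried out inside $W$ itself, and I expect this to be the main obstacle.

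The route I would take, following Nuida, is to track how the reflections of $W$ split between $H$ and $K$. Each generator $s \in S$ is an involution and so factors as $s = \pi_H(s)\,\pi_K(s)$ with commuting involutions $\pi_H(s) \in H$, $\pi_K(s) \in K$; for an edge $\{s,t\}$ of $\Gamma$, the Coxeter relation between $s$ and $t$ together with the commutation between the $H$- and $K$-components forces strong restrictions on which generators can be ``genuinely mixed'' (both components nontrivial). Since $\Gamma$ is connected, these restrictions propagate along $\Gamma$, and combining them with $Z(W) = \{\id\}$ and with the fact that an irreducible Coxeter group has no complemented ``small'' normal subgroup (the abelian normal subgroup of an affine $W$, for instance, is never a direct factor) one forces all of $S$ into a single factor, i.e.\ $W = H$ or $W = K$. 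The genuinely hard input is the control of normal and reflection subgroups of $W$ underlying this propagation step; for the complete details I would refer to \cite{Nui06}.
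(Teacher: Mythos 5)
The paper does not prove this statement at all: it is imported verbatim as Theorem~3.3 of \cite{Nui06}, with the surrounding text only explaining why Paris's finite-rank argument from \cite{Par07} does not extend. Since your proposal also ends by deferring the arbitrary-rank case to \cite{Nui06}, you are ultimately doing exactly what the paper does, and your diagnosis of \emph{why} the reduction to finite rank fails (no finite connected non-spherical full subgraph in general, e.g.\ $A_\infty$; no homomorphic retraction onto standard parabolics, as your $\mathfrak{S}_4$ example shows; indecomposability not preserved under direct limits) is accurate and is precisely the point the paper makes before citing Nuida.

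Two caveats. First, in the finite-rank paragraph you attribute to \cite{Par07} the equality $Z_{\W[\Gamma]}(c)=\langle c\rangle$ for a Coxeter element $c$; that exact statement is not what Paris proves or uses (his argument runs through essential elements and Krammer-type results giving $\langle c\rangle$ only \emph{finite index} in its centralizer, with the affine case handled separately), and the paper recalls only the indecomposability statement, not this centralizer computation. Granting the centralizer claim, your deduction via Lemma~\ref{lemmadecompcentralizer} is correct and is the same mechanism as Theorem~\ref{SfinAindec}, but as written it rests on a stronger fact than the cited source supplies. Second, the arbitrary-rank case is the entire content of the theorem as stated, and your final paragraph about propagating ``mixed'' generators along $\Gamma$ is a heuristic outline rather than an argument: the step where connectedness plus triviality of the center ``forces all of $S$ into a single factor'' is exactly the hard part, and nothing you write substitutes for it. As a standalone proof this is a genuine gap; as a reading of the paper, it is consistent, because the paper itself treats the result as a black box.
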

\noindent We can now get to the indecomposability result for Artin groups in the infinitely generated case.

\begin{thm}\label{SinfAindec}
 Let $\Gamma$ be an infinite Coxeter graph with vertex set $S$ that admits a filtration $X_0\subset X_1\subset\cdots\subset X_i\subset\cdots$ such that $X_i$ is a finite set for all $i<\infty$ and $S=V(\Gamma)=\bigcup_{i=1}^{\infty}X_i$. Suppose that $\Gamma$ and $\Gamma_i:=\Gamma_{X_i}$ are $\infty$-connected for all $i<\infty$. Then the Artin group $\A[\Gamma]$ is indecomposable.
\end{thm}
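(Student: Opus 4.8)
The plan is to assume, for contradiction, that $\A[\Gamma]=H\times K$ with $H$ and $K$ both nontrivial, and to derive a contradiction by transporting this decomposition down to the colored Artin group $\CA[\Gamma]$, where Lemma \ref{SinfCAindec} can be applied. Since $\CA[\Gamma]=\ker\bigl(\omega\colon\A[\Gamma]\to\W[\Gamma]\bigr)$ is normal and the direct factors $H,K$ are normal in $\A[\Gamma]$, the subgroups $\CA[\Gamma]\cap H$ and $\CA[\Gamma]\cap K$ are normal, commute elementwise, and meet trivially; the heart of the argument is to show that they actually generate all of $\CA[\Gamma]$.

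For this I would first record that $Z(\W[\Gamma])=\{\id\}$. Indeed, $\Gamma$ is $\infty$-connected, hence connected, so $\W[\Gamma]$ is irreducible; and $S$ is infinite, so $\W[\Gamma]$ is infinite (its canonical linear representation is faithful, so the generators are pairwise distinct). An infinite irreducible Coxeter group has trivial center (see \cite{Bourbaki}, \cite{Hump}). Now take $g\in\CA[\Gamma]$ and write $g=hk$ with $h\in H$ and $k\in K$. Applying $\omega$ gives $\omega(h)\omega(k)=\id$, so $\omega(h)=\omega(k)^{-1}$ lies in $\omega(H)\cap\omega(K)$. Since $\omega(H)$ and $\omega(K)$ commute elementwise and $\omega(H)\,\omega(K)=\W[\Gamma]$, any element of $\omega(H)\cap\omega(K)$ commutes with all of $\W[\Gamma]$, hence lies in $Z(\W[\Gamma])=\{\id\}$. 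Therefore $\omega(h)=\omega(k)=\id$, i.e. $h\in\CA[\Gamma]\cap H$ and $k\in\CA[\Gamma]\cap K$. This proves the desired splitting $\CA[\Gamma]=\bigl(\CA[\Gamma]\cap H\bigr)\times\bigl(\CA[\Gamma]\cap K\bigr)$.

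By Lemma \ref{SinfCAindec} the group $\CA[\Gamma]$ is indecomposable, so one of the two factors is trivial; without loss of generality $\CA[\Gamma]\cap K=\{\id\}$, and hence $\CA[\Gamma]\subseteq H$. Since $[H,K]=\{\id\}$, every element of $K$ then centralizes $\CA[\Gamma]$, so $K\subseteq Z_{\A[\Gamma]}(\CA[\Gamma])$. But we established above that $Z_{\A[\Gamma]}(\CA[\Gamma])=\{\id\}$, so $K=\{\id\}$, contradicting the assumption. Thus $\A[\Gamma]$ is indecomposable.

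The substantive difficulty is already absorbed into the two preceding results, namely the indecomposability of $\CA[\Gamma]$ (Lemma \ref{SinfCAindec}) and the triviality of $Z_{\A[\Gamma]}(\CA[\Gamma])$; the only delicate point in the present step is the descent of the product decomposition from $\A[\Gamma]$ to $\CA[\Gamma]$, which is exactly where the hypothesis that $\Gamma$ (hence $S$) is infinite enters, through $Z(\W[\Gamma])=\{\id\}$. I note that, along this route, Nuida's Theorem \ref{Wnonsphindec} is not strictly required; one could alternatively deduce $\W[\Gamma]=\omega(H)\times\omega(K)$ and invoke \ref{Wnonsphindec}, but then excluding the possibility $\CA[\Gamma]\cong\W[\Gamma]$ would require torsion-freeness of $\A[\Gamma]$, which is not available at this level of generality, so the centralizer computation gives the cleaner path.
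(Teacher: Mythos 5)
Your proof is correct, and it follows the same overall strategy as the paper's---descend the hypothetical decomposition $\A[\Gamma]=H\times K$ to the colored Artin group, invoke Lemma \ref{SinfCAindec}, and finish with the triviality of $Z_{\A[\Gamma]}(\CA[\Gamma])$---but the descent itself is organized differently, in a way that genuinely lightens the dependencies. The paper first uses $Z(\W[\Gamma])=\{\id\}$ to get $\W[\Gamma]=\omega(H)\times\omega(K)$, then invokes Nuida's Theorem \ref{Wnonsphindec} to conclude that, say, $\omega(K)=\{\id\}$, i.e.\ $K<\CA[\Gamma]$, and only then splits $\CA[\Gamma]$ as $K\times(\CA[\Gamma]\cap H)$. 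You instead observe that for $g=hk\in\CA[\Gamma]$ the element $\omega(h)=\omega(k)^{-1}$ lies in $\omega(H)\cap\omega(K)\subseteq Z(\W[\Gamma])=\{\id\}$, which yields $\CA[\Gamma]=(\CA[\Gamma]\cap H)\times(\CA[\Gamma]\cap K)$ directly, with no appeal to the indecomposability of $\W[\Gamma]$; this removes Theorem \ref{Wnonsphindec} from the argument entirely, which is a real economy. The endgame is the same in both versions: one factor of $\CA[\Gamma]$ is trivial, so $\CA[\Gamma]$ sits inside one of $H,K$, and the other factor is then killed by $Z_{\A[\Gamma]}(\CA[\Gamma])=\{\id\}$ (the unnumbered lemma following Lemma \ref{SinfCAindec}); your phrase ``we established above'' should be read as a citation of that lemma rather than something proved inside your argument. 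Your closing aside---that the Nuida route would require torsion-freeness to exclude $\CA[\Gamma]\cong\W[\Gamma]$---is not quite how the paper's version runs, since it too closes the remaining case with the centralizer lemma, but this does not affect the validity of your proof.
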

\begin{proof}
    Suppose that $\A[\Gamma]$ is decomposable as $\A[\Gamma]=H\times K$ with $H$ and $K$ normal subgroups. We want to show that one of these two direct factors is trivial. We consider the group homomorphism $\omega:\A[\Gamma]\longrightarrow \W[\Gamma]$, and we analyze the images $\omega(H)$ and $\omega(K)$ of the factors. Since $\omega$ is a group homomorphism, they must commute. Their intersection $\omega(H)\cap\omega(K)$ must then commute with both, and so it must be contained in the center $Z(\W[\Gamma])$. By known results about the theory of Coxeter groups (see \cite[Exercice 3, page 130]{Bourbaki}), we know that $Z(\W[\Gamma])=\{\id\}$. Thus $\omega(H)\times \omega(K)$ is a direct decomposition of $\W[\Gamma]$.\medskip\\
    \noindent By Theorem \ref{Wnonsphindec}, we get that $\W[\Gamma]$ is indecomposable, so either $\omega(H)$ or $\omega(K)$ is trivial. Without loss of generality, suppose that $\omega(K)=\{\id\}$ and that $\omega(H)=\W[\Gamma]$, which implies that $K< \CA[\Gamma]$. Take an element $g\in \CA[\Gamma]$. Then it can be written as $g=hk$, with $h\in H$ and $k\in K<\CA[\Gamma]$. But then $h=gk^{-1}\in \CA[\Gamma]$, which means that we can decompose $\CA[\Gamma]$ as $(\CA[\Gamma]\cap K)\times (\CA[\Gamma]\cap H)$. Since $K<\CA[\Gamma]$, we rewrite $\CA[\Gamma]=K\times (\CA[\Gamma]\cap H)$. In Lemma \ref{SinfCAindec} we found that $\CA[\Gamma]$ is indecomposable, which means that either that $K=\{\id\}$, in which case we are done, or that $\CA[\Gamma]\cap H=\{\id\}$. In this last case, we obtain that $\CA[\Gamma]=K$, so $\A[\Gamma]=\CA[\Gamma]\times H$. This decomposition requires $H$ to commute with $K=\CA[\Gamma]$, so $H\subset Z_{\A[\Gamma]}(\CA[\Gamma])$. From Lemma \ref{centerCA}, we know that $Z_{\A[\Gamma]}(\CA[\Gamma])=\{\id\}$, therefore $H=\{\id\}$ and $\A[\Gamma]$ is indecomposable.
\end{proof}

\section{Indecomposability of $\KVA[\Gamma]$}\label{kvaindec}
\noindent In Section \ref{fingencase}, we showed that if $\Gamma$ is a $\infty$-connected finite Coxeter graph, then the Artin group $\A[\Gamma]$ is indecomposable. Then, in Section \ref{infgencase}, we generalized the relation between the property of $\infty$-connection of $\Gamma$ and the indecomposability of $\A[\Gamma]$ also to some infinitely generated cases. Specifically, we found that $\A[\Gamma]$ is indecomposable as a direct product of nontrivial subgroups if $\Gamma$ is an infinite and $\infty$-connected Coxeter graph on a set $S$, with $S=\bigcup_{i=1}^\infty X_i$ and $X_i\subset X_j$ if $i<j$. We also requested that $\Gamma_i=\Gamma_{X_i}$ is finite and $\infty$-connected.\medskip
\\
\noindent In this section, we want to apply these results to the Artin group $\A[\hGamma]=\KVA[\Gamma]$. We recalled the definition of $\hGamma$ at the end of Subsection \ref{subs-virtart}. First, in Corollary \ref{kvaindecspherical}, we will treat the case in which $\hGamma$ is finitely generated, namely, when $\Gamma$ is of spherical type. Then, in Theorem \ref{kvaindecnonspherical}, we will use the generalized result of indecomposability for infinitely generated graphs, and apply it to the case in which $\Gamma$ is not of spherical type and $\hGamma$ is not finite. Of course, to be able to apply these results to $\hGamma$, we must check that $\hGamma$ satisfies the hypotheses of being $\infty$-connected and the direct limit of finite subgraphs $\hGamma_{\cX_k}$, each $\infty$-connected and finite.\medskip
\\
\noindent First, we show that $\hGamma$ is connected when $\Gamma$ is connected. Therefore, we will deduce that for any connected $\Gamma$, the Coxeter graph $\hGamma$ is always $\infty$-connected. 

\begin{prop}\label{gammahatconnected}
    Let $\Gamma$ be a Coxeter graph with set of vertices $S$, and let $\hGamma$ be the associated Coxeter root graph. If $\Gamma$ is connected, then $\hGamma$ is connected.
\end{prop}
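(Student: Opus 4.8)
The plan is to show that any two roots $\beta,\gamma\in\Phi[\Gamma]$ can be joined by a path in $\hGamma$. Since the vertex set of $\hGamma$ is $\Phi[\Gamma]=\{w(\alpha_s)\mid w\in\W[\Gamma],\,s\in S\}$, it is natural to first handle the case of two \emph{simple} roots $\alpha_s,\alpha_t$ with $s,t\in S$, and then to reduce the general case to it by using the $\W[\Gamma]$-equivariance built into the definition of $\widehat{\mathrm{M}}$. The key structural fact I would use is that whenever $m_{s,t}\neq\infty$, there is an edge in $\hGamma$ between $w(\alpha_s)$ and $w(\alpha_t)$ for any $w$ (because $\hmbg=m_{s,t}\geq 3$ by definition of $\hGamma$), and more generally $w$ carries edges of $\hGamma$ to edges of $\hGamma$: if $\beta=u(\alpha_s)$, $\gamma=u(\alpha_t)$ with $m_{s,t}<\infty$, then $w(\beta)=(wu)(\alpha_s)$, $w(\gamma)=(wu)(\alpha_t)$ are again joined by an edge. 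So the action of $\W[\Gamma]$ on $\hGamma$ is by graph automorphisms.

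First I would prove the claim for simple roots: \emph{for all $s,t\in S$, the vertices $\alpha_s$ and $\alpha_t$ lie in the same connected component of $\hGamma$.} Since $\Gamma$ is connected, pick a path $s=s_0,s_1,\ldots,s_k=t$ in $\Gamma$, so that consecutive $s_{i},s_{i+1}$ satisfy $m_{s_i,s_{i+1}}\geq 3$, in particular $m_{s_i,s_{i+1}}\neq\infty$. For each $i$, taking $w=\id$ in the definition of $\hGamma$ gives $\widehat m_{\alpha_{s_i},\alpha_{s_{i+1}}}=m_{s_i,s_{i+1}}\geq 3$, hence there is an edge of $\hGamma$ between $\alpha_{s_i}$ and $\alpha_{s_{i+1}}$. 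Concatenating these edges yields a path in $\hGamma$ from $\alpha_s$ to $\alpha_t$. Thus all simple roots belong to a single component, call it $\mathcal{C}_0$.

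Next I would reduce an arbitrary root to a simple one. Let $\beta=w(\alpha_s)\in\Phi[\Gamma]$ with $w\in\W[\Gamma]$, $s\in S$. Write a reduced expression $w=s_{i_1}\cdots s_{i_p}$. I claim $\beta$ lies in the same component as $\alpha_s$, by induction on $p$. For $p=0$ this is trivial. For the inductive step, write $w=w' s_{i_p}$ (or peel off a generator on the appropriate side so that $\ell(w's_{i_p})>\ell(w')$; a cleaner route is to induct on $p$ peeling a generator on the \emph{left}, $w=s_{i_1}w''$): it suffices to show that for a single generator $s'=s_{i_1}$ and any root $\gamma$, the roots $\gamma$ and $s'(\gamma)$ lie in the same component of $\hGamma$. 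Write $\gamma=u(\alpha_r)$ for some $u\in\W[\Gamma]$, $r\in S$; then $s'(\gamma)=(s'u)(\alpha_r)$. Since $s'$ acts on $\hGamma$ as a graph automorphism, it maps the component containing $\alpha_{r}$ — which is $\mathcal{C}_0$, the one containing all simple roots — to the component containing $s'(\alpha_r)$. But $s'(\alpha_r)=\alpha_{r}-2\langle\alpha_r,\alpha_{s'}\rangle\alpha_{s'}$; if $s'=r$ this is $-\alpha_r$, otherwise one checks it is connected to $\alpha_{s'}\in\mathcal{C}_0$ — so in fact $s'$ maps $\mathcal{C}_0$ to itself. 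Hence $s'$ is a graph automorphism of $\hGamma$ \emph{preserving} $\mathcal{C}_0$, and therefore $\gamma\in\mathcal{C}_0\iff s'(\gamma)\in\mathcal{C}_0$. Applying this repeatedly along the reduced word for $w$ shows $\beta=w(\alpha_s)\in\mathcal{C}_0$. Since $\beta$ was arbitrary, $\hGamma$ has a single component $\mathcal{C}_0=\Phi[\Gamma]$, i.e. $\hGamma$ is connected.

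The main obstacle I anticipate is the bookkeeping in the last step: one must verify carefully that each simple reflection $s'\in S$, viewed as an automorphism of the graph $\hGamma$, sends the ``simple-root component'' $\mathcal{C}_0$ into itself — equivalently, that $s'(\alpha_r)$ is $\hGamma$-connected to the simple roots for every $r\in S$. The case $r=s'$ gives $-\alpha_{s'}$, and here one needs that $-\alpha_{s'}$ is connected to $\alpha_{s'}$ in $\hGamma$; this should follow from connectedness of $\Gamma$ together with the equivariance (apply the argument to $s''(\alpha_{s'})$ for a neighbour $s''$ of $s'$ in $\Gamma$, and track how $-\alpha_{s'}$ arises), but it is the one spot where the argument is not purely formal. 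Once that is pinned down, everything else is routine concatenation of edges and transport of components under graph automorphisms.
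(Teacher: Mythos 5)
Your argument is workable but takes a genuinely different, and considerably longer, route than the paper. The paper's proof rests on one observation that your proposal never exploits: by the very definition of $\widehat{\mathrm{M}}$, two distinct roots $\beta,\gamma$ receive the label $\hm_{\beta,\gamma}=\infty$ --- and hence are \emph{adjacent} in $\hGamma$ --- unless there exist $w\in\W[\Gamma]$ and $s,t\in S$ with $\beta=w(\alpha_s)$ and $\gamma=w(\alpha_t)$. So non-adjacency is the exception, and the only case needing work is $\beta=w(\alpha_s)$, $\gamma=w(\alpha_t)$ with $m_{s,t}=2$; there one transports a path $s=r_1,\ldots,r_n=t$ in the connected graph $\Gamma$ through $w$ to get the path $w(\alpha_{r_1}),\ldots,w(\alpha_{r_n})$ in $\hGamma$. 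That is the entire proof. Your approach instead builds connectivity constructively (simple roots first, then induction along a word for $w$, using that $\W[\Gamma]$ acts on $\hGamma$ by graph automorphisms). The equivariance $\hm_{u(\beta),u(\gamma)}=\hm_{\beta,\gamma}$ you rely on is indeed correct --- transport the witnessing triple $(w,s,t)$ by $u$ in one direction and by $u^{-1}$ in the other --- so the skeleton is sound, but it buys nothing over the paper's two-line argument and, as you concede, leaves one step open.

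That open step is the genuine gap: you must show $s'(\mathcal{C}_0)=\mathcal{C}_0$ for each $s'\in S$, and your proposed repair via a neighbour $s''$ of $s'$ is too vague to assess (it is not clear how $-\alpha_{s'}$ ``arises'' from $s''(\alpha_{s'})$). The clean fix is, once more, the ``default is $\infty$'' observation: for every root $\beta$ one has $\hm_{\beta,-\beta}=\infty$, because if $w(\alpha_s)=\beta$ and $w(\alpha_t)=-\beta$ with $s\neq t$ then $w(\alpha_s+\alpha_t)=0$, forcing $\alpha_s=-\alpha_t$, impossible for two distinct basis vectors of $V$. Hence $\alpha_{s'}$ and $s'(\alpha_{s'})=-\alpha_{s'}$ are adjacent in $\hGamma$, so the component $s'(\mathcal{C}_0)$ meets $\mathcal{C}_0$ and therefore equals it, and your induction closes. (This is exactly the fact the paper invokes at the start of its proof of the $\infty$-connectedness statement, Proposition \ref{gammahatinftyconnected}.)
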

\begin{proof}
Take any two vertices $\beta,\gamma$ of $\hGamma$, namely, any two roots $\beta$ and $\gamma$ in $\Phi[\Gamma]$, and we want to prove that there exists a path in $\hGamma$ connecting them. Suppose that there exists an element $w\in \W[\Gamma]$ and two generators $s,t\in S$ such that $\beta=w(\alpha_s)$ and $\gamma=w(\alpha_t)$. We have then, by definition of $\hGamma$, that $\hmbg=m_{s,t}$. If $3\leq\hmbg\leq \infty$, then the two vertices $\beta$ and $\gamma$ are joined by an edge labeled by $\hmbg$ (or unlabeled if $\hmbg=3$), so they are connected. If instead $\hmbg=m_{s,t}=2$, there is no edge between the vertices $\beta$ and $\gamma$, but we show that there is a path in $\hGamma$ joining them.
Since $\Gamma$ is connected, for every couple of vertices $s,t$ in $S$, there is a path $r_1, \ldots, r_n$ in $\Gamma$ such that $r_1=s$, $r_n=t$ and for all $i\in \{1,\ldots,n-1\}$ we have $m_{r_i,r_{i+1}}\geq 3$. Set now $\delta_i=w(\alpha_{r_i})$ for every $1\leq i\leq n$. We obtain that $\delta_1=w(\alpha_s)=\beta$ and that $\delta_{n}=w(\alpha_t)=\gamma$. Furthermore, we have that $\hm_{\delta_i,\delta_{i+1}}=m_{r_i,r_{i+1}}\geq 3$ for all $i\in \{1,\ldots,n-1\}$, so each $\delta_i$ is adjacent to $\delta_{i+1}$ and the sequence $\delta_1,\ldots,\delta_n$ is a path in $\hGamma$ joining $\beta$ and $\gamma$.\\
By the definition of $\hGamma$, if such $w\in \W[\Gamma]$ and $s,t\in S$ do not exist, we have that $\hmbg=\infty$ and $\beta$ and $\gamma$ are adjacent, thus, connected.
\end{proof}
\noindent In particular, we showed that if $\W[\Gamma]$ is irreducible, then $\KVA[\Gamma]$, as an Artin group, is irreducible. We now move to study the infinity-connection of $\hGamma$.
\begin{prop}\label{gammahatinftyconnected}
    Let $\Gamma$ be a connected Coxeter graph, and let $\hGamma$ be the associated Coxeter root graph. Then $\hGamma$ is $\infty$-connected.
\end{prop}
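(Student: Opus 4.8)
The plan is to exhibit a single connected component of $\hGamma^{\infty}$ containing every vertex of $\hGamma$, i.e.\ every root in $\Phi[\Gamma]$. Write $\beta\sim\gamma$ to mean that $\beta$ and $\gamma$ are joined by an edge of $\hGamma^{\infty}$, which (since $\hGamma$ has an edge between $\beta,\gamma$ precisely when $\hmbg\ge 3$, and deleting the finite integer labels leaves only the $\infty$-labelled edges) is equivalent to $\hmbg=\infty$. The observation I would isolate first is: \emph{for any two distinct roots $\beta,\gamma$ with $\langle\beta,\gamma\rangle>0$ one has $\hmbg=\infty$, hence $\beta\sim\gamma$.} Indeed, if $\hmbg$ were finite there would be $w\in\W[\Gamma]$ and $p,q\in S$ with $\beta=w(\alpha_p)$, $\gamma=w(\alpha_q)$ and $\hmbg=m_{p,q}<\infty$; as $\beta\neq\gamma$ we must have $p\neq q$, and since the bilinear form $\langle\cdot,\cdot\rangle$ is $\W[\Gamma]$-invariant, $\langle\beta,\gamma\rangle=\langle\alpha_p,\alpha_q\rangle=-\cos(\pi/m_{p,q})\le 0$, a contradiction. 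The same computation shows that a finite-label edge $\{\beta,\gamma\}$ of $\hGamma$ forces $-1<\langle\beta,\gamma\rangle\le 0$; in particular, since $\langle\beta,-\beta\rangle=-1$ (because $\langle\beta,\beta\rangle=1$ for every root), every root satisfies $\beta\sim-\beta$.

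Next I would fix $s_0\in S$, let $\mathcal{C}$ be the connected component of $\alpha_{s_0}$ in $\hGamma^{\infty}$, and show $\alpha_s,-\alpha_s\in\mathcal{C}$ for all $s\in S$. Since $\Gamma$ is connected it suffices to propagate this along a path in $\Gamma$: if $\alpha_t,-\alpha_t\in\mathcal{C}$ and $t,t'$ are adjacent in $\Gamma$ (so $m_{t,t'}\ge 3$, or $m_{t,t'}=\infty$), then $\langle\alpha_{t'},\alpha_t\rangle\neq 0$, so one of $\langle\alpha_{t'},\alpha_t\rangle$, $\langle\alpha_{t'},-\alpha_t\rangle$ is strictly positive; by the observation above $\alpha_{t'}$ is then $\sim$-adjacent to $\alpha_t$ or to $-\alpha_t$, hence $\alpha_{t'}\in\mathcal{C}$, and then $-\alpha_{t'}\in\mathcal{C}$ since $\alpha_{t'}\sim-\alpha_{t'}$. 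Starting from $\alpha_{s_0}\sim-\alpha_{s_0}$ and iterating along paths in the connected graph $\Gamma$ gives $\alpha_s,-\alpha_s\in\mathcal{C}$ for every $s$.

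Finally I would prove $\Phi[\Gamma]\subseteq\mathcal{C}$ by induction on $l_S(w)$, for roots written as $w(\alpha_u)$ with $w\in\W[\Gamma]$, $u\in S$. The case $l_S(w)=0$ is the previous step. If $l_S(w)=n\ge 1$, take a reduced expression and write $w=s\,w'$ with $s\in S$ and $l_S(w')=n-1$; put $\gamma'=w'(\alpha_u)\in\mathcal{C}$ (induction) and $\gamma=w(\alpha_u)=s(\gamma')=\gamma'-2\langle\gamma',\alpha_s\rangle\alpha_s$. If $\langle\gamma',\alpha_s\rangle=0$ then $\gamma=\gamma'\in\mathcal{C}$. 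Otherwise $\langle\gamma,\alpha_s\rangle=\langle s(\gamma'),\alpha_s\rangle=\langle\gamma',s(\alpha_s)\rangle=-\langle\gamma',\alpha_s\rangle\neq 0$, so one of $\langle\gamma,\alpha_s\rangle$, $\langle\gamma,-\alpha_s\rangle$ is strictly positive and, by the observation, $\gamma$ is $\sim$-adjacent to $\alpha_s$ or to $-\alpha_s$; since both lie in $\mathcal{C}$, so does $\gamma$. Hence every root lies in $\mathcal{C}$, so $\hGamma^{\infty}$ is connected, i.e.\ $\hGamma$ is $\infty$-connected. The only substantive point is the observation in the first paragraph — that a positive inner product between distinct roots is incompatible with a finite label, which rests solely on $\W[\Gamma]$-invariance of $\langle\cdot,\cdot\rangle$; everything after that is a routine word-length induction, so I do not foresee a genuine obstacle.
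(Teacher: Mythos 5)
Your proof is correct. The engine is identical to the paper's: for distinct roots $\beta,\gamma$, a strictly positive value of $\langle\beta,\gamma\rangle$ is incompatible with a finite entry $\hm_{\beta,\gamma}$ (by $\W[\Gamma]$-invariance of the form), and in particular $\hm_{\beta,-\beta}=\infty$ for every root. Where you differ is the global organization. The paper invokes Proposition \ref{gammahatconnected} (connectivity of $\hGamma$) to reduce at once to a single edge $\{\beta,\gamma\}$ of $\hGamma$ with $3\le\hm_{\beta,\gamma}<\infty$, notes that then $\langle-\beta,\gamma\rangle=\cos(\pi/\hm_{\beta,\gamma})>0$ forces $\hm_{-\beta,\gamma}=\infty$, and replaces that edge by the two-step path $\beta,\,-\beta,\,\gamma$ in $\hGamma^{\infty}$. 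You instead build the $\hGamma^{\infty}$-component of $\alpha_{s_0}$ from the ground up: first all $\pm\alpha_s$ via connectivity of $\Gamma$, then every root by induction on $l_S(w)$. Your route is longer but self-contained — it never uses Proposition \ref{gammahatconnected} and in effect reproves it — whereas the paper's is a two-line reduction given that proposition. One microscopic point to make explicit in your inductive step: when you conclude that $\gamma$ is $\sim$-adjacent to $\alpha_s$ or to $-\alpha_s$, your key observation applies only to \emph{distinct} roots, so you should note that in the excluded cases $\gamma=\pm\alpha_s$ the root already lies in $\mathcal{C}$; this costs nothing but should be said.
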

\begin{proof}
    Consider the subgraph $\hGamma^{\infty}$ of $\hGamma$, where all the edges are removed except those labeled by $\infty$. Taken any two roots $\beta,\gamma$ in $\Phi[\Gamma]$, we want to prove that there exists a path in $\hGamma^{\infty}$ connecting them. Suppose on a first place that $\beta$ and $\gamma$ are joined with an edge in $\hGamma$, that is, $\hm_{\beta,\gamma}\geq 3$. Suppose now that $\gamma$ belongs to a connected component of $\hGamma^{\infty}$ different from the one of $\beta$. Specifically, there is no infinity-labeled edge between $\beta$ and $\gamma$, hence $3\leq\hmbg<\infty$. In particular, $\langle \beta,\gamma\rangle=-\cos\left(\frac{\pi}{\hmbg}\right)=-\langle -\beta,\gamma\rangle\leq 0$. Recall that a root $\beta$ is always connected to its opposite $-\beta$ by an $\infty$-labeled edge. Since there is no path in $\hGamma^{\infty}$ between $\gamma$ and $\beta$, there must be no $\infty$-labeled edge joining $\gamma$ and $-\beta$, otherwise we would have a contradiction. Hence, $2\leq\hm_{-\beta,\gamma}<\infty$, that means $-\langle -\beta,\gamma\rangle=+\cos\left(\frac{\pi}{\hm_{-\beta,\gamma}}\right)\geq0$. But then, the only possibility is that $\hmbg=\hm_{-\beta,\gamma}=2$, which is a contradiction since we assumed $\hm_{\beta,\gamma}\geq 3$.\medskip \\
    \noindent Suppose now that $\beta$ and $\gamma$ are not adjacent, namely that $\hmbg=2$. By Proposition \ref{gammahatconnected}, we know that $\hGamma$ is connected, and there exists a path in $\hGamma$ with vertices $\delta_1,\ldots,\delta_n$ such that $\delta_1=\beta$, $\delta_n=\gamma$ and $\hm_{\delta_i,\delta_{i+1}}\geq 3$ for all $i\in \{1,\ldots,n-1\}$. Applying the argument in the previous paragraph to each pair of adjacent vertices $\delta_i$ and $\delta_{i+1}$, we find that there must be an $\infty$-labeled path in $\hGamma$ joining $\delta_i$ and $\delta_{i+1}$. Concatenating all such paths, we find an $\infty$-labeled path joining $\beta$ with $\gamma$, and therefore we can conclude.
    \end{proof}
\noindent As in the previous sections, if not differently specified, every Coxeter graph $\Gamma$ that we will consider during this section will be connected. 
    \begin{cor}\label{kvaindecspherical}
        If $\Gamma$ is a Coxeter graph of spherical type, then the group $\KVA[\Gamma]$ is indecomposable.
    \end{cor}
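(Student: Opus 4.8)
The plan is to deduce this corollary directly from Theorem \ref{SfinAindec}, combined with Propositions \ref{gammahatconnected} and \ref{gammahatinftyconnected} and the identification $\KVA[\Gamma] \cong \A[\hGamma]$ from \cite[Theorem 2.3]{BellParThiel}.

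First I would observe that, as recalled in Section \ref{preliminaries}, the root system $\Phi[\Gamma]$ is finite if and only if $\Gamma$ is of spherical type. Since the vertex set of $\hGamma$ is by construction in bijection with $\Phi[\Gamma]$, the hypothesis that $\Gamma$ is of spherical type guarantees that $\hGamma$ has a finite set of vertices. This is the only place where the spherical-type assumption is actually used.

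Next I would invoke Proposition \ref{gammahatinftyconnected}, which ensures that $\hGamma$ is $\infty$-connected (Proposition \ref{gammahatconnected} moreover gives connectedness, although that is subsumed). At this point $\hGamma$ satisfies exactly the hypotheses of Theorem \ref{SfinAindec}: it is an $\infty$-connected Coxeter graph on a finite vertex set. Hence Theorem \ref{SfinAindec} applies and yields that the Artin group $\A[\hGamma]$ is indecomposable. Transporting this conclusion along the isomorphism $\KVA[\Gamma] \cong \A[\hGamma]$ gives that $\KVA[\Gamma]$ is indecomposable.

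I expect no genuine obstacle here: all of the substantive work has already been carried out, namely the centralizer computation of Lemma \ref{centralizerinA} (via Bass--Serre theory and the amalgamated-product decomposition of $\A[\Gamma]$) feeding into Theorem \ref{SfinAindec}, and the structural Propositions \ref{gammahatconnected}--\ref{gammahatinftyconnected} on $\hGamma$. The only point that requires a moment's care is the equivalence ``$\hGamma$ finite $\Longleftrightarrow$ $\Gamma$ of spherical type'', which is immediate from the finiteness criterion for root systems.
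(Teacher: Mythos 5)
Your proposal is correct and follows exactly the paper's own argument: the spherical-type hypothesis gives finiteness of $\Phi[\Gamma]=V(\hGamma)$, Proposition \ref{gammahatinftyconnected} gives $\infty$-connectedness of $\hGamma$, and Theorem \ref{SfinAindec} applied to $\A[\hGamma]=\KVA[\Gamma]$ concludes. No gaps.
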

\begin{proof}
    This is a straightforward consequence of the previous results. Knowing that $\KVA[\Gamma]=\A[\hGamma]$, when $\Gamma$ is of spherical type, implies that $\A[\hGamma]$ is finitely generated, and by Proposition \ref{gammahatinftyconnected} we know that $\hGamma$ is $\infty$-connected. Then, by Theorem \ref{SfinAindec}, we get that $\KVA[\Gamma]$ is indecomposable.
\end{proof}
\noindent We analyze now the case in which $\Gamma$ is not of spherical type. By Propositions \ref{gammahatconnected} and \ref{gammahatinftyconnected}, we know that if $\Gamma$ is connected, then $\hGamma$ is $\infty$-connected. By \cite{deodh82}, we know that the root system $\Phi[\Gamma]=\{w(\alpha_s)\,|\,w\in \W[\Gamma], s\in S\}$ can be partitioned into positive roots and negative roots, namely, $\Phi[\Gamma]=\Phi[\Gamma]^+\sqcup (-\Phi[\Gamma]^+)$. Set now a total ordering on $\Phi[\Gamma]^+$, so that $\Phi[\Gamma]^+=\{\beta_1,\beta_2,\ldots,\beta_i,\ldots\}$ and $\Phi[\Gamma]^-=-\Phi[\Gamma]^+=\{-\beta_1,-\beta_2,\ldots\}$. Now, for each $i\in \{1,2,\ldots\}$, let $\cX_i$ be $\cX_i^+\sqcup \cX_i^-\subset\Phi[\Gamma]=V(\hGamma)$, where
\begin{align*}
    \cX_i^+:=\{\beta_1, \beta_2,\ldots,\beta_i\},\qquad \qquad \cX_i^-=-\cX
    _i^+=\{-\beta_1, -\beta_2,\ldots,-\beta_i\}.
\end{align*}
\noindent For instance, $\cX_1=\{\beta_1,-\beta_1\}$, and $\cX_2=\{\beta_1,\beta_2,-\beta_1,-\beta_2\}$. By definition of $\hGamma$, we know that for each $\beta\in \Phi[\Gamma]$, the vertices in $\hGamma$ represented by $\beta$ and $-\beta$ are always joined by an $\infty$-labeled edge. We show now that it is possible to set a total ordering on $\Phi[\Gamma]^+$ in order to have that the finite subgraph $\hGamma_{\cX_i}$ is connected for every $i\geq 1$.
\begin{lem}\label{partialorderconnected}
    Let $\Gamma$ be a Coxeter graph with $\hGamma$ the Coxeter graph associated with the root system $\Phi[\Gamma]$. Then there exists a total ordering on $\Phi[\Gamma]^+=\{\beta_1,\beta_2,\ldots\}$ such that for each $\cX_i$ as defined above, $\hGamma_{\cX_i}$ is connected.
\end{lem}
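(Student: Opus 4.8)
The plan is to build the total ordering on $\Phi[\Gamma]^+$ by a greedy induction, adding one positive root at a time while maintaining the invariant that $\hGamma_{\cX_i}$ is connected. The key structural input is the fact recalled just before the lemma: in $\hGamma$ every root $\beta$ is joined to its opposite $-\beta$ by an $\infty$-labeled edge. Hence, when passing from $\cX_{i-1}$ to $\cX_i=\cX_{i-1}\cup\{\beta_i,-\beta_i\}$, the two new vertices $\beta_i$ and $-\beta_i$ are already adjacent to one another, so $\hGamma_{\cX_i}$ is connected as soon as at least one of $\beta_i,-\beta_i$ is $\hGamma$-adjacent to a vertex of $\cX_{i-1}$. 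Thus the whole problem reduces to enumerating $\Phi[\Gamma]^+$ as $\beta_1,\beta_2,\dots$ so that for every $i\ge 2$ the pair $\{\beta_i,-\beta_i\}$ meets the $\hGamma$-neighborhood of $\cX_{i-1}$.

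For the base step, pick $\beta_1\in\Phi[\Gamma]^+$ arbitrarily; then $\cX_1=\{\beta_1,-\beta_1\}$ is connected via the $\infty$-edge between $\beta_1$ and $-\beta_1$. For the inductive step, suppose $\beta_1,\dots,\beta_i$ have been chosen with $\hGamma_{\cX_i}$ connected and $\cX_i\subsetneq\Phi[\Gamma]=V(\hGamma)$ (if equality holds, we have exhausted the root system and are done — this is precisely the spherical type case, where $\Phi[\Gamma]$ is finite). Since $\hGamma$ is connected by Proposition \ref{gammahatconnected}, there is an edge of $\hGamma$ joining a vertex $u\in\cX_i$ to a vertex $v\notin\cX_i$. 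Because $\cX_i$ is closed under $\beta\mapsto-\beta$, neither $v$ nor $-v$ lies in $\cX_i$, so the positive representative $\beta_{i+1}$ among $\{v,-v\}$ does not lie in $\cX_i^+$; moreover $v\in\{\beta_{i+1},-\beta_{i+1}\}$ is $\hGamma$-adjacent to $u\in\cX_i$, hence $\hGamma_{\cX_{i+1}}$ is connected. This already settles the finite (spherical type) case in full.

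The delicate point — and the step I expect to be the main obstacle — is the infinite case: the greedy choice above preserves connectivity but, left to itself, need not list \emph{every} positive root, so the resulting sequence might only enumerate a proper subset of $\Phi[\Gamma]^+$ and fail to be a total ordering of it. To remedy this I will steer the construction. Fix an auxiliary enumeration $\Phi[\Gamma]^+=\{\gamma_1,\gamma_2,\dots\}$, which exists since $S$, hence $\W[\Gamma]$ and $\Phi[\Gamma]$, is countable. At stage $i+1$, let $k$ be the least index with $\gamma_k\notin\cX_i^+$, fix a path in $\hGamma$ from $\beta_1$ to $\gamma_k$, let $v$ be the first vertex along that path that is not in $\cX_i$, and take $\beta_{i+1}$ to be its positive representative as before; then $v$ is adjacent to a vertex of $\cX_i$, so the invariant is preserved. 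While $\gamma_k$ remains unlisted, $k$ does not change and the index of the first missing vertex along this fixed path strictly increases at each stage, so $\gamma_k$ is adjoined after finitely many steps; therefore every $\gamma_k$ eventually appears, $\beta_1,\beta_2,\dots$ is a bijective enumeration of $\Phi[\Gamma]^+$, and the invariant yields that $\hGamma_{\cX_i}$ is connected for all $i$. The care needed here lies entirely in the bookkeeping of the last paragraph (checking that the "progress along the fixed path toward $\gamma_k$" is genuine and that $k$ behaves correctly as $i$ grows); the connectivity maintenance itself is immediate from Proposition \ref{gammahatconnected} together with the $\beta\mapsto-\beta$ edges.
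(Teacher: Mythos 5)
Your proof is correct and follows the same core strategy as the paper's: a greedy induction that adjoins one pair $\{\beta_{i+1},-\beta_{i+1}\}$ at a time, using the connectedness of $\hGamma$ (Proposition \ref{gammahatconnected}) to find a new vertex adjacent to $\cX_i$. Two points of comparison are worth recording. First, a cosmetic one: where the paper passes to the positive representative by invoking the implication $\hm_{\beta_j,\delta}\neq 2\Rightarrow\hm_{\beta_j,-\delta}\neq 2$, you instead route the negative root $-v$ through the $\infty$-labeled edge joining $v$ to $-v$; both work, and yours needs one fewer computation with the bilinear form. Second, and more substantively: the paper's proof stops at ``going on in this fashion,'' which guarantees that each $\hGamma_{\cX_i}$ is connected but does not by itself guarantee that the resulting sequence exhausts $\Phi[\Gamma]^+$ when the root system is infinite --- and exhaustion is genuinely needed, both for the sequence to be a total ordering of $\Phi[\Gamma]^+$ as the statement asserts and for the identity $\Phi[\Gamma]=\bigcup_{i}\cX_i$ used in Remark \ref{unioncXradici} and Theorem \ref{kvaindecnonspherical}. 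Your steering device --- fixing an auxiliary enumeration $\{\gamma_k\}$ of $\Phi[\Gamma]^+$ and, at each stage, advancing along a fixed $\hGamma$-path from $\beta_1$ to the least unlisted $\gamma_k$ --- correctly repairs this: the first missing vertex on that path moves strictly forward at each stage, so each $\gamma_k$ is adjoined after finitely many steps. This is a genuine (if standard) addition that the paper's argument leaves implicit, and your write-up is the more complete of the two.
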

\begin{proof}
    We construct $\cX_i$ by induction on $i$.
    Pick any root $\beta$ in $\Phi[\Gamma]^+$, and set $\beta=\beta_1$. The Coxeter subgraph $\hGamma_{\cX_1}$ is clearly connected. Since $\Gamma$ is a connected Coxeter graph, so it is $\hGamma$ by Proposition \ref{gammahatconnected}, thus, there exists at least one root $\gamma\in \Phi[\Gamma]$ such that $\hm_{\beta_1,\gamma}\neq 2$. If $\gamma\in \Phi[\Gamma]^+$, we then set $\gamma=\beta_2$. Otherwise, we also have that $\hm_{\beta_1,-\gamma}\neq 2$, so we set $\beta_2=-\gamma\in \Phi[\Gamma]^+$. In both cases, we get that $\cX_2=\{\beta_1,\beta_2,-\beta_1,-\beta_2\}$, so $\hGamma_{\cX_2}$ is connected.\medskip
    \\
    \noindent Suppose now by induction that $\Gamma_{\cX_i}$ is connected for some $i\geq 2$. Since $\hGamma$ is connected, there exist roots $\beta\in \cX_i$ and $\delta\in \Phi[\Gamma]\backslash\cX_i$ such that $\beta$ and $\delta$ are adjacent. Up to replacing $\beta$ by $-\beta$ and $\delta$ by $-\delta$, we may assume that $\beta\in \cX_i^+$, say $\beta=\beta_j$ for some $j\leq i$. If $\delta$ is a positive root, then set $\beta_{i+1}=\delta$. Otherwise, since $\hm_{\beta_j,\delta}\neq 2$ implies $\hm_{\beta_j,-\delta}\neq 2$, we set then $\beta_{i+1}=-\delta$. In both cases, by construction, we get that $\hGamma_{\cX_{i+1}}$ is connected.\\
    
  Therefore, the total ordering that we set on $\Phi[\Gamma]$ going on in this fashion is such that $\hGamma_{\cX_i}$ is connected for each $i\geq 1$.
\end{proof}

\noindent We show in the next lemma that the finite subgraphs $\hGamma_{\cX_i}$ that we just defined are actually $\infty$-connected.
\begin{lem}\label{cXiinftyconnected}
    Let $\Gamma$ be a Coxeter graph, with $\hGamma$ the Coxeter graph associated with the root system $\Phi[\Gamma]$. Let $\cX_i$ be the finite subsets of $\Phi[\Gamma]=V(\hGamma)$ as above, such that $\hGamma_{\cX_i}$ is connected for every $i\geq 1$. Then, for each $i$, $\hGamma_{\cX_i}$ is $\infty$-connected.
\end{lem}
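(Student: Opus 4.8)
The plan is to upgrade the ordinary connectedness of $\hGamma_{\cX_i}$, already established in Lemma \ref{partialorderconnected}, to $\infty$-connectedness by rerouting each finitely labeled edge through two $\infty$-labeled edges that stay inside $\cX_i$. Concretely, I fix two roots $\beta,\gamma\in\cX_i$; since $\hGamma_{\cX_i}$ is connected there is a path $\beta=\delta_0,\delta_1,\dots,\delta_n=\gamma$ with every $\delta_k\in\cX_i$ and every $\hm_{\delta_k,\delta_{k+1}}\geq 3$. I keep every step that is already an $\infty$-edge, and for every step with $3\leq\hm_{\delta_k,\delta_{k+1}}<\infty$ I insert the detour $\delta_k\to-\delta_k\to\delta_{k+1}$. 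Concatenating the kept edges with the detours produces a path from $\beta$ to $\gamma$ using only $\infty$-labeled edges of $\hGamma_{\cX_i}$, which is exactly what $\infty$-connectedness requires.

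Two facts make the detour legitimate, and both are already present inside the proof of Proposition \ref{gammahatinftyconnected}. First, $\hm_{\rho,-\rho}=\infty$ for every root $\rho$, directly from the definition of $\hGamma$ (no $w\in\W[\Gamma]$ and $s,t\in S$ can satisfy $\rho=w(\alpha_s)$ and $-\rho=w(\alpha_t)$, since simple roots are distinct positive roots), so $\delta_k$ and $-\delta_k$ are joined by an $\infty$-edge. Second, since $\langle\cdot,\cdot\rangle$ is $\W[\Gamma]$-invariant, $\langle\delta_k,\delta_{k+1}\rangle=-\cos(\pi/\hm_{\delta_k,\delta_{k+1}})<0$ because $\hm_{\delta_k,\delta_{k+1}}$ is finite and $\geq 3$; hence $\langle-\delta_k,\delta_{k+1}\rangle>0$, which is incompatible with $\hm_{-\delta_k,\delta_{k+1}}$ being finite (a finite label would force $\langle-\delta_k,\delta_{k+1}\rangle=-\cos(\pi/\hm_{-\delta_k,\delta_{k+1}})\leq 0$, using $\cos(\pi/2)=0$), so $\hm_{-\delta_k,\delta_{k+1}}=\infty$ and $-\delta_k,\delta_{k+1}$ are joined by an $\infty$-edge. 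Finally, the detour remains inside $\cX_i$: this is exactly the point of the symmetric construction of $\cX_i$, since $\cX_i^-=-\cX_i^+$ makes $\cX_i$ stable under $\rho\mapsto-\rho$, so $-\delta_k\in\cX_i$ whenever $\delta_k\in\cX_i$.

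I do not expect a genuine obstacle here. The analytic input — the sign of $\langle\cdot,\cdot\rangle$ on pairs of roots and its relation to the labels $\hm$ — is borrowed essentially verbatim from Proposition \ref{gammahatinftyconnected}, while the combinatorial input, namely connectedness of $\hGamma_{\cX_i}$ together with closure of $\cX_i$ under negation, is precisely what Lemma \ref{partialorderconnected} and the definition of $\cX_i$ supply. The one point worth stating carefully is that the rerouted path must live in the induced subgraph on $\cX_i$ and not merely in $\hGamma$; this is guaranteed by the symmetry of $\cX_i$, so the write-up is essentially bookkeeping once the two facts above are isolated.
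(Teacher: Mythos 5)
Your proof is correct. The two facts it rests on --- that $\hm_{\rho,-\rho}=\infty$ for every root $\rho$, and that $\hm_{\delta,\delta'}$ and $\hm_{-\delta,\delta'}$ cannot both be finite and $\geq 3$ because $\langle\delta,\delta'\rangle=-\langle-\delta,\delta'\rangle$ while a finite label forces $-\cos(\pi/\hm)\leq 0$ --- are exactly the analytic inputs the paper uses, both here and in Proposition \ref{gammahatinftyconnected}. Where you diverge is in the combinatorial packaging: the paper proves the lemma by induction on $i$, exploiting that $\cX_{i+1}$ is obtained from $\cX_i$ by adjoining a single pair $\{\beta_{i+1},-\beta_{i+1}\}$ with $\beta_{i+1}$ adjacent to some $\beta_j\in\cX_i^+$, and showing the new vertex attaches to the already $\infty$-connected $\hGamma_{\cX_i}$ by an $\infty$-edge. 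You instead give a direct, non-inductive argument: take any path in $\hGamma_{\cX_i}$ and reroute each finitely labeled edge $\delta_k\,\delta_{k+1}$ through the detour $\delta_k\to-\delta_k\to\delta_{k+1}$, both legs of which are $\infty$-edges and stay in $\cX_i$ because $\cX_i$ is closed under negation. Your version is slightly cleaner and in fact proves the more general statement that for \emph{any} negation-closed subset $\cX\subset\Phi[\Gamma]$ with $\hGamma_{\cX}$ connected, $\hGamma_{\cX}$ is $\infty$-connected; it does not need the specific one-root-at-a-time construction of Lemma \ref{partialorderconnected}, only its conclusion. The paper's induction buys nothing extra here, so either write-up is acceptable; just make sure to note (as you implicitly do) that the degenerate case $-\delta_k=\delta_{k+1}$ cannot require a detour, since that edge already carries the label $\infty$.
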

\begin{proof}
    Again, we proceed by induction on $k$. Of course, since $\cX_1=\{\beta_1,-\beta_1\}$ and $\hm_{\beta_1,-\beta_1}=\infty$, $\hGamma_{\cX_1}$ is $\infty$-connected. Let us analyze now $\cX_2=\{\beta_1,\beta_2,-\beta_1,-\beta_2\}$. If $\hm_{\beta_1,\beta_2}=\infty$, we get immediately that $\hGamma_{\cX_2}$ is $\infty$-connected, so let us suppose that $\hm_{\beta_1,\beta_2}\neq \infty$. We look now at the coefficient $\hm_{-\beta_1,\beta_2}$. If it equals $\infty$, then all the four vertices in $\hGamma_{\cX_2}$ are joined by some path in $\hGamma_{\cX_2}^{\infty}$, so the graph is $\infty$-connected. Suppose then that $\hm_{\beta_1,\beta_2}\neq \infty \neq \hm_{-\beta_1,\beta_2}$. As in the proof of Proposition \ref{gammahatinftyconnected}, we obtain that
    \[
    0\geq -\cos{\frac{\pi}{\hm_{\beta_1,\beta_2}}}=\langle \beta_1,\beta_2\rangle=-\langle- \beta_1,\beta_2\rangle=+\cos{\frac{\pi}{\hm_{-\beta_1,\beta_2}}}\geq 0;
    \]
    from which we deduce that $\hm_{\beta_1,\beta_2}=\hm_{-\beta_1,\beta_2}=2$. But this would mean that $\beta_2$ belongs to a different connected component from that of $\beta_1,-\beta_1$, which is absurd since $\hGamma_{\cX_2}$ is connected. Therefore, at least one between $\beta_1$ and $-\beta_1$ is connected to $\beta_2$ through an $\infty$-labeled edge, so $\hGamma_{\cX_2}$ is $\infty$-connected. \medskip\\
    \noindent Suppose now by induction that $\hGamma_{\cX_i}$ is $\infty$-connected. By the construction made in the proof of Lemma \ref{partialorderconnected}, $\cX_{i+1}^+=\cX_i^+\sqcup \{\beta_{i+1}\}$, where $\beta_{i+1}$ is a positive root such that there exists at least one $\beta_j$ in $\cX_i^+$ (with $j\leq i$) such that $\hm_{\beta_j,\beta_{i+1}}\neq 2$. If $\hm_{\beta_j,\beta_{i+1}}=\infty$ or $\hm_{-\beta_j,\beta_{i+1}}=\infty$, thanks to the induction hypotheses we obtain that $\beta_{i+1}$ is connected to each root in $\cX_i$ through an $\infty$-labeled path, and so that $\hGamma_{\cX_{i+1}}$ is $\infty$-connected. If both $\hm_{\beta_j,\beta_{i+1}}$ and $\hm_{-\beta_j,\beta_{i+1}}$ are different from infinity, as in the previous argument, we get that $\hm_{\beta_j,\beta_{i+1}}=2$, which is absurd. Hence, at least one of the two must be infinity, and $\hGamma_{\cX_{i+1}}$ is $\infty$-connected. By induction, $\hGamma_{\cX_i}$ is $\infty$-connected for each $i\geq1$.
\end{proof}
\begin{rmk}\label{unioncXradici}
    Let $\Gamma$ be a Coxeter graph and $\Phi[\Gamma]$ be its root system. Fix a total ordering on $\Phi[\Gamma]^+$ such that for each $i\geq 1$ the subsets $\cX_i$ give rise to the connected Coxeter graphs $\hGamma_{\cX_i}$. By construction, we have that each set $\cX_i$ is finite, that $\cX_i\subset \cX_j$ if $i<j$, and $\Phi[\Gamma]=V(\hGamma)=\bigcup_{i=1}^\infty\cX_{i}$. Moreover, thanks to Lemma \ref{cXiinftyconnected}, we know that each finite subgraph $\hGamma_{\cX_i}$ is $\infty$-connected.
\end{rmk}
\noindent We are now ready to prove the indecomposability of $\KVA[\Gamma]$ for $\Gamma$ not of spherical type.

\begin{thm}\label{kvaindecnonspherical}
    Let $\Gamma$ be a Coxeter graph, non necessarily of spherical type. Then the group $\KVA[\Gamma]$ is indecomposable.
\end{thm}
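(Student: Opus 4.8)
The plan is to reduce the statement entirely to results already established, splitting into two cases according to whether $\Gamma$ is of spherical type, and using throughout the identification $\KVA[\Gamma]\cong\A[\hGamma]$ from \cite{BellParThiel}. Recall that $\Phi[\Gamma]=V(\hGamma)$ is finite precisely when $\Gamma$ is of spherical type, so the spherical/non-spherical dichotomy for $\Gamma$ is exactly the finite/infinite dichotomy for the vertex set of $\hGamma$.

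If $\Gamma$ is of spherical type, then $\hGamma$ is a finite Coxeter graph, and by Proposition \ref{gammahatinftyconnected} it is $\infty$-connected; Corollary \ref{kvaindecspherical} (which is nothing but Theorem \ref{SfinAindec} applied to $\hGamma$) then already gives that $\KVA[\Gamma]=\A[\hGamma]$ is indecomposable. So the only case that requires attention is when $\Gamma$ is not of spherical type.

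In that case $\Phi[\Gamma]$ is infinite, hence $\hGamma$ is an infinite Coxeter graph, and the idea is to verify that $\hGamma$ satisfies all the hypotheses of Theorem \ref{SinfAindec}. First, $\hGamma$ is connected by Proposition \ref{gammahatconnected} and $\infty$-connected by Proposition \ref{gammahatinftyconnected}. Next, fix the total ordering of $\Phi[\Gamma]^+$ produced in Lemma \ref{partialorderconnected}: it yields an increasing sequence of finite subsets $\cX_1\subset\cX_2\subset\cdots$ of $V(\hGamma)=\Phi[\Gamma]$ with each $\hGamma_{\cX_i}$ connected, Lemma \ref{cXiinftyconnected} strengthens this to each $\hGamma_{\cX_i}$ being $\infty$-connected, and Remark \ref{unioncXradici} records that $\bigcup_{i\ge 1}\cX_i=V(\hGamma)$. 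Thus the filtration $(\cX_i)_{i\ge 1}$ meets every requirement of Theorem \ref{SinfAindec}, and we conclude that $\A[\hGamma]=\KVA[\Gamma]$ is indecomposable, finishing the proof in this case as well.

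I do not expect a genuine obstacle at this stage: the substantive work — exhibiting $\hGamma$ as an $\infty$-connected graph and as a directed union of finite $\infty$-connected subgraphs, together with the indecomposability machinery for such graphs developed in Section \ref{inftyconnindec} (via the colored Artin group, its trivial centralizer in $\A[\hGamma]$, and Nuida's theorem on Coxeter groups) — has all been carried out beforehand. The one point deserving care is the bookkeeping: one must make sure the exhaustion $(\cX_i)$ of Remark \ref{unioncXradici} literally matches the filtration hypothesis of Theorem \ref{SinfAindec} (each term finite, strictly increasing, exhausting the full vertex set, each term $\infty$-connected, and the ambient graph $\infty$-connected), and that the reduction to the infinite case is legitimate, i.e.\ that a non-spherical $\Gamma$ really does force $V(\hGamma)=\Phi[\Gamma]$ to be infinite so that Theorem \ref{SfinAindec} is not applicable and Theorem \ref{SinfAindec} is.
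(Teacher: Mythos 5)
Your proposal is correct and follows essentially the same route as the paper's own proof: the spherical case is dispatched by Corollary \ref{kvaindecspherical}, and the non-spherical case uses the filtration $(\cX_i)$ from Lemma \ref{partialorderconnected}, Lemma \ref{cXiinftyconnected} and Remark \ref{unioncXradici} to invoke Theorem \ref{SinfAindec}. No gaps.
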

\begin{proof}
    Recall that $\KVA[\Gamma]=\A[\hGamma]$ and that $V(\hGamma)=\Phi[\Gamma]$, which is a non-necessarily finite set of vertices. If $\Gamma$ is of spherical type, by Corollary \ref{kvaindecspherical} we can conclude. Suppose then that $\Gamma$ is not of spherical type.\\
    We saw that, if we have a total ordering on $\Phi[\Gamma]^+=\{\beta_1,\beta_2,\beta_3,\ldots\}$, we can define for every $i\geq1$ a set $\cX_i=\cX_i^+\sqcup \cX_i^-$, where $\cX_i^+=\{\beta_1,\ldots,\beta_i\}$. We fix now a total ordering on $\Phi[\Gamma]^+=\{\beta_1,\beta_2,\beta_3,\ldots\}$ such that for all $i\geq 1$, the subgraph $\hGamma_{\cX_i}$ is connected. In Lemma \ref{partialorderconnected} we showed that such an order always exists, and in Lemma \ref{cXiinftyconnected} we showed that $\hGamma_{\cX_i}$ is also $\infty$-connected for each $i\geq 1$. In Remark \ref{unioncXradici}, we observed that $\Phi[\Gamma]=V(\hGamma)=\bigcup_{i=1}^\infty\cX_{i}$. We can now apply Theorem \ref{SinfAindec} to the infinite Coxeter graph $\hGamma$ whose vertices are $\Phi[\Gamma]=V(\hGamma)=\bigcup_{i=1}^\infty\cX_{i}$, where each $\hGamma_{\cX_i}$ is finite and $\infty$-connected. Thus, $\A[\hGamma]=\;\varinjlim_{i\in \mathbb{N}}\;\A[\hGamma_{\cX_i}]=\KVA[\Gamma]$ is indecomposable.
\end{proof}

\section{Centralizer of $\KVA[\Gamma]$}\label{sectioncentralizerofKVA}
We recall that in \cite{BellParThiel} the authors prove many results concerning the center and some group centralizers in the virtual Artin group $\VA[\Gamma]$. In particular, they show that for any Coxeter graph $\Gamma$, the center $Z(\VA[\Gamma])$ is trivial. Furthermore, they prove that the centralizer of the Coxeter group $\W[\Gamma]$, viewed as a subgroup of $\VA[\Gamma]$, is equal to its center, so $Z_{\VA[\Gamma]}(\W[\Gamma])=Z(\W[\Gamma])$, (which is nontrivial only when $\Gamma$ is of spherical type and its longest element is central). Finally, they show that, for any $\Gamma$, the center of $\KVA[\Gamma]=\A[\hGamma]$ is trivial.\\
\\
In this section we study the centralizer of $\KVA[\Gamma]$ in $\VA[\Gamma]$. To do so, we recall that, as an Artin group, $\A[\hGamma]=\KVA[\Gamma]$ can never be of spherical type. Even when $\hGamma$ has a finite number of vertices, we showed in Proposition \ref{gammahatinftyconnected} that it is $\infty$-connected, and so it gives rise to an infinite Coxeter group. Moreover, we summarized in Remark \ref{unioncXradici} that $V(\hGamma)=\Phi[\Gamma]$ can be written as the (possibly) infinite union \[\Phi[\Gamma]\;=\;\bigcup_{i=1}^{\infty} \cX_i,\] with $\cX_i\subset \cX_j$ if $i<j$, with $\cX_i\subset \Phi[\Gamma]$ finite and such that $\hGamma_{\cX_i}$ is $\infty$-connected. Observe that the Coxeter group associated to the $\infty$-connected Coxeter graph $\hGamma_{\cX_i}$ is infinite for every $i\in \mathbb{N}$. These facts will be used to show some important properties concerning the quasi-center of the Coxeter group $\W[\hGamma]$.\\
\\
\noindent Let $\Gamma$ be a Coxeter graph on a set of vertices $S$. The \textit{quasi-center} $QZ(\W[\Gamma])$ is \[\{w\in \W[\Gamma]\;|\;  wSw^{-1}=S\}.\] The quasi-center of an Artin group is defined in an analogous way. When $S$ is a finite set, there is the following result.

\begin{thm}[Exercise 3, page 130 in \cite{Bourbaki}]\label{qzBourb}
Let $(\W[\Gamma],S)$ be a Coxeter system with $S$ finite. Let $w\in\W[\Gamma]$ be such that $wSw^{-1}=S$. Then, such $w$ is non-trivial only when $\W[\Gamma]$ is finite, in which case it coincides with the longest element $w=w_0$ in $\W[\Gamma]$.
\end{thm}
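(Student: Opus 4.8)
The plan is to work inside the canonical linear representation $\rho\colon\W[\Gamma]\to GL(V)$ and read off the action of $w$ on the simple roots $\Pi=\{\alpha_s\mid s\in S\}$. I would restrict to the case $\Gamma$ connected, which is the situation in which the statement will be applied. Since $wSw^{-1}=S$, conjugation by $w$ permutes $S$; write $\phi(s)$ for the simple reflection $wsw^{-1}$. The isometry $\rho(w)$ conjugates the reflection negating $\alpha_s$ (and fixing $\alpha_s^{\perp}$) to the reflection negating $w(\alpha_s)$, and this must coincide with $\rho(\phi(s))$, the reflection negating $\alpha_{\phi(s)}$; since moreover $\langle w(\alpha_s),w(\alpha_s)\rangle=\langle\alpha_s,\alpha_s\rangle=1=\langle\alpha_{\phi(s)},\alpha_{\phi(s)}\rangle$, matching the $(-1)$-eigenlines forces $w(\alpha_s)=\varepsilon_s\,\alpha_{\phi(s)}$ for some sign $\varepsilon_s\in\{+1,-1\}$.

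The crux is to prove that if $w\neq\id$ then $\varepsilon_s=-1$ for \emph{every} $s\in S$. Put $A=\{s\in S:\varepsilon_s=+1\}$ and $B=\{s\in S:\varepsilon_s=-1\}$. If $B=\emptyset$, then $w$ sends $\Pi$ into $\Pi$, hence sends every positive root (a non-negative combination of simple roots) again to a positive root; so no positive root is turned negative by $w$, giving $l_S(w)=0$ and $w=\id$. Thus $B\neq\emptyset$. If $A\neq\emptyset$ too, then because $\Gamma$ is connected there is an edge joining some $s\in A$ to some $t\in B$, so $\langle\alpha_s,\alpha_t\rangle<0$, and $\W[\Gamma]$-invariance of the form yields
\[
\langle\alpha_s,\alpha_t\rangle=\langle w(\alpha_s),w(\alpha_t)\rangle=\langle\alpha_{\phi(s)},-\alpha_{\phi(t)}\rangle=-\langle\alpha_{\phi(s)},\alpha_{\phi(t)}\rangle,
\]
whence $\langle\alpha_{\phi(s)},\alpha_{\phi(t)}\rangle>0$ with $\phi(s)\neq\phi(t)$ (as $\phi$ is a bijection) --- impossible, since distinct simple roots always have non-positive inner product. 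Hence $A=\emptyset$, as claimed.

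Granting this, $w(\alpha_s)=-\alpha_{\phi(s)}\in\Phi[\Gamma]^-$ for all $s\in S$, which by the standard length criterion (see \cite{Hump}) is equivalent to $l_S(ws)<l_S(w)$ for every $s\in S$. Proposition~\ref{proplongestelement} then forces $\W[\Gamma]$ to be of spherical type --- in particular finite --- and $w$ to be its longest element $w_0$, which is the unique element satisfying that inequality for all $s$. The genuinely delicate step is the middle paragraph: ruling out a mixed sign pattern among the $\varepsilon_s$ is exactly where one needs connectedness of $\Gamma$ together with the invariance of $\langle\cdot,\cdot\rangle$; the rest is routine manipulation of the geometric representation and the exchange/length condition.
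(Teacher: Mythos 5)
The paper does not actually prove Theorem \ref{qzBourb}: it is quoted as a known result with a pointer to Bourbaki's Exercise 3, page 130, so there is no in-text argument to compare yours against. Your proof is correct and is, in substance, the standard solution to that exercise: from $wSw^{-1}=S$ one gets $w(\alpha_s)=\pm\alpha_{\phi(s)}$ by matching the $(-1)$-eigenlines of the conjugated reflections (the norm condition $\langle w(\alpha_s),w(\alpha_s)\rangle=1$ correctly pins the scalar to a sign), the connectedness of $\Gamma$ together with the invariance of the form and the fact that distinct simple roots pair non-positively rules out a mixed sign pattern, and the all-negative case gives $l_S(ws)<l_S(w)$ for every $s$, which via Proposition \ref{proplongestelement} forces sphericity and $w=w_0$; the all-positive case gives $l_S(w)=0$. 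Two remarks. First, you are right to restrict to connected $\Gamma$: the statement is false for disconnected graphs (take one spherical and one non-spherical component), and the paper only escapes this because of its blanket connectedness convention and because Theorem \ref{Qzinfingen} applies the result to connected full subgraphs $\Gamma_X$. Second, your conclusion $w=w_0$ uses uniqueness of the element satisfying $l_S(w_0s)<l_S(w_0)$ for all $s$, which the paper's Proposition \ref{proplongestelement} asserts only implicitly (``the longest element''); this is standard and acceptable to invoke, but worth flagging as an ingredient. No gaps.
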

\noindent The previous theorem shows in particular that, for $S$ finite, the quasi-center $QZ(\W[\Gamma])$ is $\{1,w_0\}$ when $\Gamma$ is of spherical type (so $w_0$ exists), and it is trivial otherwise. We recalled the definition and some properties of the longest element in Subsection \ref{parabolic} and Proposition \ref{proplongestelement}. \medskip
\\
\noindent
We aim now to generalize Theorem \ref{qzBourb} to an infinitely generated Coxeter group.

\begin{thm}\label{Qzinfingen}
Let $\Gamma$ be a connected Coxeter graph on an infinite countable set $S$. Then its quasi-center $QZ(\W[\Gamma])$ is trivial. 
\end{thm}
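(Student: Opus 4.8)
The plan is to deduce the infinite-rank statement from the finite-rank case, Theorem~\ref{qzBourb}, using that every element of $\W[\Gamma]$ has finite support. Assume towards a contradiction that $w\in QZ(\W[\Gamma])$ with $w\neq\id$, and let $T=supp(w)$, a finite nonempty subset of $S$; recall $w\in\W[\Gamma_T]$.

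The first step, and the one doing the real work, is a localization property: \emph{for every finite $X\subseteq S$ with $T\subseteq X$ one has $w\in QZ(\W[\Gamma_X])$}. To see this, note $w\in\W[\Gamma_T]\subseteq\W[\Gamma_X]$; and for each $x\in X$ the element $wxw^{-1}$ lies in $S$ (as $w$ normalizes $S$) and in the standard parabolic subgroup $\W[\Gamma_X]$ (as $w,x\in\W[\Gamma_X]$). Since an element of $S$ lying in $\W[\Gamma_X]$ has support — namely itself — contained in $X$, we have $S\cap\W[\Gamma_X]=X$, whence $wxw^{-1}\in X$. Thus conjugation by $w$ sends the finite set $X$ into itself injectively, so $wXw^{-1}=X$, i.e. $w\in QZ(\W[\Gamma_X])$.

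Now I would choose $X$ suitably. Because $\Gamma$ is connected, the finitely many vertices of $T$ can be joined by finitely many paths in $\Gamma$, so there is a finite $Y$ with $T\subseteq Y$ and $\Gamma_Y$ connected. Since $S$ is infinite, $Y\neq S$, and connectedness of $\Gamma$ yields an edge from some $y_0\in Y$ to some $s_0\in S\setminus Y$; put $X:=Y\cup\{s_0\}$, a finite set with $\Gamma_X$ connected and $T\subseteq Y\subsetneq X$. By the localization property, $w\in QZ(\W[\Gamma_X])$. As $\Gamma_X$ is a finite connected Coxeter graph, Theorem~\ref{qzBourb} tells us that $QZ(\W[\Gamma_X])$ is $\{\id\}$ when $\Gamma_X$ is not of spherical type, and $\{\id,w_0\}$ (with $w_0$ the longest element, which has full support $X$) otherwise. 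The first alternative forces $w=\id$; the second forces $w=w_0$, so $supp(w)=X$, contradicting $supp(w)=T\subsetneq X$. Either way we reach a contradiction, so $QZ(\W[\Gamma])=\{\id\}$.

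The only subtle ingredient is the localization step, which rests on the standard fact that $S\cap\W[\Gamma_X]=X$ for a standard parabolic subgroup (equivalently, that supports behave well under passing to parabolics); the remaining arguments are elementary once $\Gamma$ is known to be connected, which is exactly what produces the edge leaving $Y$ and hence the strict inclusion $T\subsetneq X$.
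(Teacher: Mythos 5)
Your proof is correct and follows essentially the same route as the paper: localize $w$ to a finite connected standard parabolic subgroup via the identity $S\cap\W[\Gamma_X]=X$, then invoke Theorem~\ref{qzBourb} together with the fact that the longest element of a spherical type Coxeter group has full support. The only (harmless) difference is that you make a single strict enlargement past $supp(w)$ and compare supports directly, whereas the paper first takes $X\supseteq supp(w)$ and then a second strict enlargement $Y$ to exclude $w=w_0^X$; both arguments rest on exactly the same ingredients.
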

\begin{proof}
     Suppose that there exists an element $w\in QZ(\W[\Gamma])$; we want to show that necessarily $w=\id$. Even though $S$ is infinite, the support of each element is finite, so we choose $X\subset S$ such that $supp(w)\subset X$ and $|X|<\infty$. We can suppose without loss of generality that $X$ is connected. Since $X=S\cap \W[\Gamma_X]$, $w\W[\Gamma_{X}]w^{-1}\subset \W[\Gamma_X]$, and $wSw^{-1}=S$, it follows that $wXw^{-1}\subset X$. Since $X$ is finite, we have $wXw^{-1}=X$, implying that $w\in QZ(\W[\Gamma_X])$. If $\Gamma_X$ is not of spherical type, then by Theorem \ref{qzBourb} we conclude $w=\id$. \medskip \\ \noindent Suppose now that $\Gamma_X$ is of spherical type. We have that $w\in QZ(\W[\Gamma_X])=\{\id, w_0^X\}$, where $w_0^X$ is the longest element in the finite group $\W[\Gamma_X]$. Since $S$ is infinite and $\Gamma$ is connected, there exists $Y\subset S$ such that $X$ is strictly contained in $Y$, with $Y$ finite and $\Gamma_Y$ connected. In particular, $w\in \W[\Gamma_X]<\W[\Gamma_Y]$, which implies that \[wYw^{-1}\subset\W[\Gamma_Y]\cap S=Y.\]
     This shows that $w\in QZ(\W[\Gamma_Y])$.
     Therefore, 
     \[w\in QZ(\W[\Gamma_X])\cap QZ(\W[\Gamma_Y])=\{\id,w_0^X\}\cap QZ(\W[\Gamma_Y]).\]
     We want now to show that $w_0^X\notin QZ(\W[\Gamma_Y])$. If $\Gamma_Y$ is not of spherical type, then $QZ(\W[\Gamma_Y])=\{\id\}$ and the result is clear. Otherwise, $QZ(\W[\Gamma_Y])=\{\id, w_0^Y\}$. But $w_0^X\neq w_0^Y$. Indeed, all the generators of a spherical type Coxeter group must appear at least once in any (reduced) expression of its longest element (see Subsection \ref{parabolic} or \cite{deodh82}). Knowing that the support of an element does not depend on the choice of its reduced expression, we can state that $supp(w_0^X)=X$ and $supp(w_0^Y)=Y$. Since $X$ is strictly contained in $Y$, the supports of $w_0^X$ and $w_0^Y$ are different. Therefore, $\{\id,w_0^X\}\cap \{\id,w_0^Y\}=\{\id\}$, and $w=\id$.
\end{proof}

\noindent The main application of the previous theorem occurs when the Coxeter graph is $\hGamma$, which is always $\infty$-connected when $\Gamma$ is connected, it is never of spherical type, and has infinitely many vertices whenever $\Gamma$ is of non-spherical type.

\begin{cor}\label{QZWhGamma}
    Let $\Gamma$ be a connected Coxeter graph and let $\hGamma$ be the associated Coxeter root graph. Then $QZ(\W[\hGamma])=\{\id\}$.
\end{cor}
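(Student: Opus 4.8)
The plan is to reduce the statement to the two quasi-center computations already at our disposal — Theorem~\ref{qzBourb} for the finitely generated case and Theorem~\ref{Qzinfingen} for the infinitely generated case — after recording the structural facts about $\hGamma$ established earlier. First I would note that $\hGamma$ is a connected Coxeter graph by Proposition~\ref{gammahatconnected}, that it is $\infty$-connected by Proposition~\ref{gammahatinftyconnected}, and that its vertex set $V(\hGamma)=\Phi[\Gamma]$ has at least two elements: for any $s\in S$ both $\alpha_s$ and $s(\alpha_s)=-\alpha_s$ belong to $\Phi[\Gamma]$, and they are distinct. Then I would split into two cases according to whether $\Phi[\Gamma]$ is infinite or finite, which by the cited classification is the same as whether $\Gamma$ is of non-spherical or spherical type.

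In the infinite case, $\hGamma$ is a connected Coxeter graph on an infinite countable vertex set, so Theorem~\ref{Qzinfingen} applies directly and yields $QZ(\W[\hGamma])=\{\id\}$. In the finite case, $\hGamma$ is a finite, connected, $\infty$-connected Coxeter graph with at least two vertices; as observed immediately after the definition of $\infty$-connection, such a graph can never be of spherical type, so $\W[\hGamma]$ is infinite. Applying Theorem~\ref{qzBourb} with the finite generating set $\Phi[\Gamma]$ then forces $QZ(\W[\hGamma])=\{\id\}$ as well. Combining the two cases gives the corollary.

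The argument is essentially immediate once the two general theorems are available; the only step requiring a moment of care is the finite case, where one must combine the $\infty$-connection of $\hGamma$ with $|V(\hGamma)|\geq 2$ to exclude spherical type \emph{before} invoking Theorem~\ref{qzBourb} — absent that exclusion, the quasi-center could a priori contain the longest element $w_0$. Beyond this bit of bookkeeping I do not anticipate any genuine obstacle.
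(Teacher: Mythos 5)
Your proposal is correct and follows exactly the route the paper intends: the paper justifies the corollary by remarking that $\hGamma$ is connected, $\infty$-connected, never of spherical type, and infinite precisely when $\Gamma$ is non-spherical, and then invokes Theorem~\ref{Qzinfingen} in the infinite case and Theorem~\ref{qzBourb} in the finite one. Your explicit observation that $|V(\hGamma)|\ge 2$ (via $\alpha_s$ and $-\alpha_s$) is needed to rule out spherical type in the finite case is a correct and worthwhile piece of bookkeeping that the paper leaves implicit.
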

\noindent The proof of this corollary follows straightforward from Proposition \ref{gammahatconnected} and Theorem \ref{Qzinfingen}. We now have the necessary tools to show the following.
\begin{thm}\label{centralizerKVA}
Let $\Gamma$ be any connected Coxeter graph. Then $Z_{\VA[\Gamma]}(\KVA[\Gamma])=\{\id\}$.
\end{thm}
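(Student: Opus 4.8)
The plan is to exploit the semidirect product structure $\VA[\Gamma]=\KVA[\Gamma]\rtimes\W[\Gamma]$ together with the fact that $\KVA[\Gamma]=\A[\hGamma]$. Take an element $g\in Z_{\VA[\Gamma]}(\KVA[\Gamma])$ and write it uniquely as $g=k\,\iota_{\W}(w)$ with $k\in\KVA[\Gamma]$ and $w\in\W[\Gamma]$. For every $x\in\KVA[\Gamma]$ the condition $gx=xg$ becomes $k\,\iota_{\W}(w)\,x = x\,k\,\iota_{\W}(w)$, i.e. $k\,w(x)\,k^{-1}=x$ using Notation \ref{notationactionWonKVA} (here $w(x)=\iota_{\W}(w)x\iota_{\W}(w)^{-1}$, which lies in $\KVA[\Gamma]$ since $\KVA[\Gamma]$ is normal). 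So $g$ centralizing $\KVA[\Gamma]$ is equivalent to: for all $x\in\KVA[\Gamma]$, $w(x)=k^{-1}xk$; that is, the automorphism of $\KVA[\Gamma]$ given by the action of $w$ equals the inner automorphism given by $k$.

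First I would deal with the ``$w$'' part. The action of $w\in\W[\Gamma]$ on $\KVA[\Gamma]=\A[\hGamma]$ permutes the standard generators $\{\delta_\beta\mid\beta\in\Phi[\Gamma]\}$ according to the action of $\W[\Gamma]$ on the root system: $w(\delta_\beta)=\delta_{w(\beta)}$ (recalled in the preliminaries). The fact that this coincides with conjugation by $k\in\A[\hGamma]$ means that conjugation by $k$ sends the generating set $\{\delta_\beta\}$ bijectively onto itself, following the permutation $\beta\mapsto w(\beta)$. In particular $k$ normalizes the set of standard generators of $\A[\hGamma]$, so $k$ lies in the quasi-center $QZ(\A[\hGamma])$, and the induced permutation of $V(\hGamma)=\Phi[\Gamma]$ must be a graph automorphism of $\hGamma$. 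Now I would invoke the result on the quasi-center of a Coxeter group: by Corollary \ref{QZWhGamma}, $QZ(\W[\hGamma])=\{\id\}$. Since $\hGamma$ is never of spherical type (Proposition \ref{gammahatinftyconnected}), the quasi-center of the Artin group $\A[\hGamma]$ is also trivial — one passes from $\A[\hGamma]$ to $\W[\hGamma]$ via the natural surjection $\A[\hGamma]\to\W[\hGamma]$, noting that an element normalizing the standard generating set of $\A[\hGamma]$ maps to an element normalizing the standard generators of $\W[\hGamma]$, hence maps to $\id$, and then that $\ker$ contributes nothing because a quasi-central element inducing the identity permutation is central and $Z(\A[\hGamma])=\{\id\}$ by the Bellingeri–Paris–Thiel result quoted at the start of this section. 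Therefore $k=\id$, and the permutation induced on $\Phi[\Gamma]$ is trivial, forcing $w(\beta)=\beta$ for all $\beta\in\Phi[\Gamma]$; since the canonical linear representation of $\W[\Gamma]$ is faithful and the $\alpha_s$ already span, $w=\id$.

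I expect the main obstacle to be the step that turns ``the action of $w$ on $\KVA[\Gamma]$ is an inner automorphism by $k$'' into ``$k$ normalizes the standard generating set of $\A[\hGamma]$''. One must argue that the only way conjugation by an element of an Artin group can send one standard generator $\delta_\beta$ to another standard generator $\delta_{w(\beta)}$ \emph{for every $\beta$ simultaneously} is via a quasi-central element; this likely needs a reduction to finite parabolic subgraphs $\hGamma_{\cX_i}$, using the direct-limit description $\A[\hGamma]=\varinjlim_i\A[\hGamma_{\cX_i}]$ and the fact (from the excerpt) that each $\hGamma_{\cX_i}$ is finite and $\infty$-connected, together with Theorem \ref{qzBourb} applied at the Coxeter level on each $\cX_i$. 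Once quasi-centrality of $k$ is established, the finish is quick. An alternative, possibly cleaner route for this step: apply $\pi_K:\VA[\Gamma]\to\W[\Gamma]$ to the relation $k\,w(x)\,k^{-1}=x$ — but since $x\in\KVA[\Gamma]=\ker\pi_K$ this gives no information, so one really is forced to work inside $\A[\hGamma]$ and use its quasi-center, which is why Theorem \ref{Qzinfingen} and Corollary \ref{QZWhGamma} were proved just beforehand; the proof should simply assemble these pieces.
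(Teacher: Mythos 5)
Your proposal is correct and follows essentially the same route as the paper: write $g=k\,\iota_{\W}(w)$, observe that the relation $k^{-1}\delta_{\beta}k=\delta_{w(\beta)}$ places $k$ in $QZ(\A[\hGamma])$, push through $\omega$ to land in $QZ(\W[\hGamma])=\{\id\}$ (Corollary \ref{QZWhGamma}), and conclude that the induced permutation of $\Phi[\Gamma]$ is trivial, whence $w=\id$ and $k\in Z(\KVA[\Gamma])=\{\id\}$. The step you flag as the main obstacle is in fact immediate: since $w$ permutes $\Phi[\Gamma]$ bijectively, the identity $k^{-1}\delta_{\beta}k=\delta_{w(\beta)}$ already says that conjugation by $k$ maps the generating set $\widehat{\mathcal{S}}$ onto itself, which is precisely the definition of quasi-centrality, so no reduction to the finite subgraphs $\hGamma_{\cX_i}$ is needed.
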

\begin{proof}
    Recall that $\KVA[\Gamma]=\A[\hGamma]$ and that there exists a group homomorphism
    \[
    \omega:\A[\hGamma]\longrightarrow \W[\hGamma].
    \]
    \noindent Take now $g\in Z_{\VA[\Gamma]}(\KVA[\Gamma])$, and call $\widehat{\mathcal{S}}=\{\delta_{\beta}\;|\;\beta\in \Phi[\Gamma]\}$ the generating set of $\KVA[\Gamma]$ as an Artin group, and denote by $\widehat{S}=\{\widehat{s}_{\beta}\,|\, \beta\in \Phi[\Gamma]\}$ the generating set of the corresponding Coxeter group $\W[\hGamma]$. With these notations, $\omega(\delta_{\beta})=\widehat{s}_{\beta}\in \widehat{S}$ for all $\delta_{\beta}\in \widehat{\mathcal{S}}$. If $g$ belongs to the centralizer of $\KVA[\Gamma]$, we have then that $g\,\delta_{\beta}=\delta_{\beta}\,g$ for all $\beta\in \Phi[\Gamma]$. In particular, since $g\in \VA[\Gamma]=\KVA[\Gamma]\rtimes \W[\Gamma]$, we can write it as $g=k\cdot w$, with $k\in \KVA[\Gamma]$ and $w\in\W[\Gamma]$. Rewriting the previous equality, we get that $k\,w\,\delta_{\beta}=\delta_{\beta}\,k\,w$, which implies
    \[\,w\,\delta_{\beta}\,w^{-1}=k^{-1}\,\delta_{\beta}\,k.\]
    \noindent Now, using the fact that the action of $\W[\Gamma]$ on $\KVA[\Gamma]$ is given by $,w\,\delta_{\beta}\,w^{-1}=\delta_{w(\beta)}$, we obtain that 
    \[k^{-1}\,\delta_{\beta}\,k\,=\,\delta_{w(\beta)}.\]
    \noindent In particular, by the action of $\W[\Gamma]$ on its root system, for all $\beta$ in $\Phi[\Gamma]$ and for all $w\in \W[\Gamma]$, $w(\beta)$ belongs to $\Phi[\Gamma]$. Thus, $k\in \KVA[\Gamma]=\A[\hGamma]$ is such that $k^{-1}\widehat{\mathcal{S}}\,k=\widehat{\mathcal{S}}$, which means that $k$ belongs to $QZ(\A[\hGamma])$. Let $\omega(k)\in \W[\hGamma]$ be the image of $k$ under $\omega$. Since $\omega$ respects the product, we must have that \[\omega(k)^{-1}\;\widehat{S}\;\omega(k)=\omega(k^{-1}\,\widehat{\mathcal{S}}\,k)=\omega(\widehat{\mathcal{S}})=\widehat{S}.\]
    In other words, $\omega(k)^{-1}\in QZ(\W[\hGamma])$, which is trivial by Corollary \ref{QZWhGamma}. Then, $\omega(k)=\id$, and for all $\beta\in \Phi[\Gamma]$, we have \[\omega(\delta_{\beta})=\omega(k^{-1}\,\delta_{\beta}\,k)=\omega(\delta_{w(\beta)})\quad \Longleftrightarrow \quad \delta_{\beta}=\delta_{w(\beta)}\quad \Longleftrightarrow \quad \beta=w(\beta).\]
    \noindent But the only element $w\in \W[\Gamma]$ that fixes each root in $\Phi[\Gamma]$ is $w=\id$. Then, $g\in Z_{\VA[\Gamma]}(\KVA[\Gamma])$ is $g=k\in \KVA[\Gamma]$, so $g\in Z(\KVA[\Gamma])$, but by \cite{BellParThiel} we know that $Z(\KVA[\Gamma])=\{\id\}$. This shows that $g=\id$ and thus $Z_{\VA[\Gamma]}(\KVA[\Gamma])=\{\id\}$.
\end{proof}

\section{Indecomposability of $\VA[\Gamma]$}\label{indecva}
In this section, we show that for any connected Coxeter graph $\Gamma$, the virtual Artin group $\VA[\Gamma]$ is indecomposable.\medskip
\\
\noindent To establish this result, we distinguish between two cases based on the center of the associated Coxeter group: whether $Z(\W[\Gamma])=\{\id\}$ or $Z(\W[\Gamma])\neq\{\id\}$. If $Z(\W[\Gamma])$ is trivial, we can quickly deduce the indecomposability of $\VA[\Gamma]$. This case is addressed in Subsection \ref{trivialcentercase}.\medskip \\
\noindent Recall that a \textit{Remak decomposition of a group $G$} is a decomposition of $G$ as a direct product of finitely
many non-trivial indecomposable subgroups.  An automorphism $\varphi: G \longrightarrow G$ is said to be \textit{central}
if it is the identity modulo the center of $G$, that is, if $g^{-1}\varphi(g) \in Z(G)$ for all $g\in G$. By a result known as the Krull-Remak-Schmidt theorem, any finite group has a decomposition
as a direct product of indecomposable groups, which is unique up to a central automorphism
and a permutation of the factors.

\begin{reptheorem}{decompW}
\cite[Thm 4.1]{Par07}
Let $\Gamma$ be a connected spherical type Coxeter graph, and let $(\W[\Gamma],S)$ be its Coxeter system. Then $\W[\Gamma]$ is decomposable if and only if $\Gamma\in \{I_2(p)\,|\, p\geq 6 \; \mbox{and} \; p\equiv 2\, (\mathrm{mod} \,4)\}\cup \{B_n\,|\, n\geq 3\; \mbox{and}\; n \;\mbox{odd}\}\cup \{E_7,H_3\}$. In these cases, a Remak decomposition of $\W[\Gamma]$ is isomorphic to
\[
\W[\Gamma]=Z(\W[\Gamma])\,\times\, {\W[\Gamma]}/{Z(\W[\Gamma])}.
\]
\end{reptheorem}

\noindent This theorem implies that whenever a spherical-type Coxeter group $\W[\Gamma]$ is decomposable, its center $Z(\W[\Gamma])$ always appears as a direct indecomposable factor. If we assume that $Z(\W[\Gamma])=\{\id\}$, we immediately obtain the following result:
\begin{cor}\label{trivialcenterthenindecompo}
    Let $\Gamma$ be a connected Coxeter graph, and let $\W[\Gamma]$ be its associated Coxeter group. If $Z(\W[\Gamma])=\{\id\}$, then $\W[\Gamma]$ is indecomposable.
\end{cor}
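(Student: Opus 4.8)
The plan is to obtain this corollary immediately from the two structural results just recalled, via a case split on whether the connected graph $\Gamma$ is of spherical type.

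First I would dispose of the non-spherical case. If $\Gamma$ is connected and not of spherical type, then Theorem~\ref{Wnonsphindec} --- Nuida's indecomposability theorem, which holds in arbitrary rank --- directly gives that $\W[\Gamma]$ is indecomposable, with no hypothesis on the center required at all.

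Next I would treat the spherical case by contradiction. Suppose $\Gamma$ is connected of spherical type and that $\W[\Gamma]$ is decomposable. Then Theorem~\ref{decompW} forces $\Gamma$ into the finite list $\{I_2(p)\mid p\geq 6,\ p\equiv 2\ (\mathrm{mod}\ 4)\}\cup\{B_n\mid n\geq 3\ \text{odd}\}\cup\{E_7,H_3\}$, and tells us moreover that a Remak decomposition of $\W[\Gamma]$ has the shape $\W[\Gamma]=Z(\W[\Gamma])\times \W[\Gamma]/Z(\W[\Gamma])$. Since a Remak decomposition is, by definition, a direct product of \emph{non-trivial} indecomposable subgroups, the factor $Z(\W[\Gamma])$ must be non-trivial; this contradicts the standing hypothesis $Z(\W[\Gamma])=\{\id\}$. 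Hence $\W[\Gamma]$ is indecomposable in this case as well, which completes the argument.

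I do not expect any genuine obstacle here: the corollary is a purely logical consequence of Theorems~\ref{Wnonsphindec} and~\ref{decompW}, and the only point worth making explicit is that in Theorem~\ref{decompW} the center is forced to appear as a non-trivial direct factor in every decomposable instance, so that triviality of the center precludes any nontrivial direct decomposition. All the substantive content is packaged inside the two cited theorems, which may be invoked as black boxes.
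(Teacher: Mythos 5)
Your proposal is correct and follows essentially the same route as the paper: the non-spherical case is handled by Nuida's theorem (the paper also cites Paris for the finite non-spherical subcase, but Theorem~\ref{Wnonsphindec} as stated covers arbitrary rank), and the spherical case is settled by Theorem~\ref{decompW}, observing that any decomposable spherical instance has its non-trivial center as a Remak factor, contradicting $Z(\W[\Gamma])=\{\id\}$. The only cosmetic difference is that the paper organizes the case split via the characterization of when a Coxeter group has non-trivial center, whereas you argue by contradiction directly from the shape of the Remak decomposition; the content is identical.
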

\begin{proof}
It is known that the center of a Coxeter group $\W[\Gamma]$ is non-trivial if and only if $\Gamma$ is of spherical type and the longest element $w_0$ is central. Thus, if $Z(\W[\Gamma])=\{\id\}$, then one of the following holds:
\begin{itemize}
    \item $\Gamma$ is infinite.
    \item $\Gamma$ is finite but not of spherical type.
    \item $\Gamma$ is finite and of spherical type, but its longest element $w_0$ is not central.
\end{itemize}

\noindent In the first two cases, it follows from Theorem \ref{Wnonsphindec} and Theorem \ref{decompW} respectively that $\W[\Gamma]$ is indecomposable. In the third case, Theorem \ref{decompW} ensures that $\W[\Gamma]$ remains indecomposable. Therefore, under the assumption that $\W[\Gamma]$ has a trivial center, we conclude that $\W[\Gamma]$ is indecomposable.
\end{proof}
\noindent  We will use this result, along with the fact that the centralizer of $\KVA[\Gamma]$ in $\VA[\Gamma]$ is trivial, to deduce the indecomposability of the virtual Artin group in Lemma \ref{vaindectrivcent}.\\
\\
\noindent If $Z(\W[\Gamma])\neq\{\id\}$, then $\W[\Gamma]$ must be of spherical type, with its longest element $w_0$ lying in the center. In Theorem \ref{decompW}, Paris demonstrates that in this scenario, $\W[\Gamma]$ may admit a decomposition.
For instance, consider the well-known case of the dihedral group $\W[I_2(6)]$, which is isomorphic to the direct product $\mathfrak{S}_3\times \mathbb{Z}_2=\W[A_2]\times \W[A_1]$. Paris classifies the cases in which this decomposition occurs.\medskip \\
\noindent
We analyze the case $Z(\W[\Gamma])=\{\id,w_0\}$ in Subsection \ref{nontrivialcentercase}, showing that even though $\W[\Gamma]$ can be decomposable, the associated virtual Artin group $\VA[\Gamma]$ is always indecomposable.

\subsection{Trivial center case}\label{trivialcentercase}

In this subsection, we use the indecomposability of $\W[\Gamma]$ under the assumption that $Z(\W[\Gamma])=\{\id\}$ to prove that $\VA[\Gamma]$ is also indecomposable. Specifically, we assume for contradiction that $\VA[\Gamma]$ admits a direct decomposition as $H\times K$. We then project these factors onto $\W[\Gamma]$ through $\pi_K$ and analyze their images. Finally, we show that one of the factors, either $H$ or $K$, must be trivial, thereby concluding that $\VA[\Gamma]$ cannot be decomposable.

\noindent 
\begin{lem}\label{vaindectrivcent}
    Let $\Gamma$ be a Coxeter graph such that $Z(\W[\Gamma])=\{\id\}$. Then $\VA[\Gamma]$ is indecomposable.
\end{lem}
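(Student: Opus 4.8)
The plan is to exploit the semidirect product decomposition $\VA[\Gamma]=\KVA[\Gamma]\rtimes\W[\Gamma]$ together with the three structural inputs now available: $\W[\Gamma]$ is indecomposable under the hypothesis $Z(\W[\Gamma])=\{\id\}$ (Corollary \ref{trivialcenterthenindecompo}), $\KVA[\Gamma]$ is indecomposable (Theorem \ref{kvaindecnonspherical}), and $Z_{\VA[\Gamma]}(\KVA[\Gamma])=\{\id\}$ (Theorem \ref{centralizerKVA}). Suppose $\VA[\Gamma]=H\times K$; the goal is to show that $H$ or $K$ is trivial.

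First I would push the decomposition through the surjection $\pi_K:\VA[\Gamma]\longrightarrow\W[\Gamma]$. Since $H$ and $K$ commute elementwise in $\VA[\Gamma]$ and $\pi_K$ is a homomorphism, $\pi_K(H)$ and $\pi_K(K)$ commute elementwise; hence any element of $\pi_K(H)\cap\pi_K(K)$ commutes with $\pi_K(H)\pi_K(K)=\pi_K(HK)=\W[\Gamma]$ and so lies in $Z(\W[\Gamma])=\{\id\}$. Therefore $\W[\Gamma]=\pi_K(H)\times\pi_K(K)$, and by Corollary \ref{trivialcenterthenindecompo} one factor is trivial; without loss of generality $\pi_K(K)=\{\id\}$, i.e. $K<\ker\pi_K=\KVA[\Gamma]$.

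Next I would intersect the original decomposition with $\KVA[\Gamma]$, exactly as in the proof of Theorem \ref{SinfAindec}: for $g\in\KVA[\Gamma]$ write $g=hk$ with $h\in H$ and $k\in K<\KVA[\Gamma]$, so $h=gk^{-1}\in\KVA[\Gamma]$; this gives $\KVA[\Gamma]=(\KVA[\Gamma]\cap H)\times K$. By indecomposability of $\KVA[\Gamma]$ (Theorem \ref{kvaindecnonspherical}), either $K=\{\id\}$, in which case we are done, or $\KVA[\Gamma]\cap H=\{\id\}$, which forces $\KVA[\Gamma]=K$. In that remaining case $\VA[\Gamma]=H\times\KVA[\Gamma]$, so $H$ centralizes $\KVA[\Gamma]$; hence $H<Z_{\VA[\Gamma]}(\KVA[\Gamma])=\{\id\}$ by Theorem \ref{centralizerKVA}, and again one factor is trivial. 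In every case one of $H,K$ is trivial, so $\VA[\Gamma]$ is indecomposable.

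Once the heavy inputs (indecomposability of $\KVA[\Gamma]$ and triviality of its centralizer in $\VA[\Gamma]$) are in place, no serious obstacle remains; the only points requiring care are verifying that $\W[\Gamma]=\pi_K(H)\times\pi_K(K)$ is genuinely a \emph{direct} product — which is precisely where $Z(\W[\Gamma])=\{\id\}$ is used — and keeping track of which factor has been shown trivial at each stage so that the final appeal to $Z_{\VA[\Gamma]}(\KVA[\Gamma])=\{\id\}$ is applied to the correct subgroup.
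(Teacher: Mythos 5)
Your argument is correct and follows essentially the same route as the paper's proof: project through $\pi_K$ and use $Z(\W[\Gamma])=\{\id\}$ plus Corollary \ref{trivialcenterthenindecompo} to force $K<\KVA[\Gamma]$, then intersect with $\KVA[\Gamma]$ and invoke its indecomposability (Theorem \ref{kvaindecnonspherical}) and the triviality of its centralizer (Theorem \ref{centralizerKVA}). No gaps; the case analysis and the final appeal to $Z_{\VA[\Gamma]}(\KVA[\Gamma])=\{\id\}$ are exactly as in the paper.
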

\begin{proof}
    Suppose that there exist subgroups $H,K<\VA[\Gamma]$ such that $\VA[\Gamma]=H\times K$. We will show that one of the two subgroups must be trivial.\medskip \\
     \noindent Consider the projection $\pi_K:\VA[\Gamma]=H\times K\longrightarrow \W[\Gamma]$ defined in Subsection \ref{subs-virtart}, and in particular consider the images $\pi_K(H)$ and $\pi_K(K)$. Necessarily, $\pi_K(H)\cdot\pi_K(K)=\W[\Gamma]$, and the elements in their intersection must commute with the entire group. Thus $\pi_K(H)\cap\pi_K(K)\subset Z(\W[\Gamma])$, that is trivial by hypothesis.
    Therefore $\pi_K(H)\times\pi_K(K)$ is a decomposition of $\W[\Gamma]$, which is indecomposable by Corollary \ref{trivialcenterthenindecompo}. Hence, one of the factors must be trivial. Suppose that $\pi_K(K)=\{\id\}$, which implies that $K<\ker(\pi_K)=\KVA[\Gamma]$. \medskip
    \\
    \noindent Assuming that $\VA[\Gamma]$ decomposes as $H\times K$, we also deduce that $\KVA[\Gamma]$ decomposes as \[(\KVA[\Gamma]\cap K)\times (\KVA[\Gamma]\cap H).\]
    Indeed, take $g\in \KVA[\Gamma]<\VA[\Gamma]=H\times K$. Then, $g$ can be written as $g=h\,\cdot k $ with $k\in K<\KVA[\Gamma]$ and $h\in H$. Hence $h=gk^{-1}\in \KVA[\Gamma]\cap H $. By Theorem \ref{kvaindecnonspherical}, $\KVA[\Gamma]$ is indecomposable, which implies that $\KVA[\Gamma]\cap K=\{\id\}$ or $H\cap\KVA[\Gamma]=\{\id\}$. Since we assumed $K<\KVA[\Gamma]$, the equality $\KVA[\Gamma]\cap K=\{\id\}$ implies $K=\{\id\}$ and we are done. If $\KVA[\Gamma]\cap H=\{\id\}$, then $\KVA[\Gamma]\cap K=\KVA[\Gamma]$. Since we knew that $K<\KVA[\Gamma]$, we conclude that $\KVA[\Gamma]=K$.\medskip \\
    \noindent Now, $H$ is a normal subgroup of $\VA[\Gamma]$ such that $\VA[\Gamma]=\KVA[\Gamma]\times H$, and in particular, $H$ commutes with $\KVA[\Gamma]$. Then, $H<Z_{\VA[\Gamma]}(\KVA[\Gamma])$, which is trivial by Theorem \ref{centralizerKVA}. Finally, we have shown that $H=\{\id\}$, so $\VA[\Gamma]$ is indecomposable. 
\end{proof}

\subsection{Non-trivial center case}\label{nontrivialcentercase}
In this subsection we prove that $\VA[\Gamma]$ is indecomposable when $\Gamma$ is of spherical type and the Coxeter group $\W[\Gamma]$ has non-trivial center. Together with the results of Subsection \ref{trivialcentercase}, this establishes the result of indecomposability of the virtual Artin group for any connected Coxeter graph $\Gamma$. \medskip
\\
\noindent It is important to note that in the proof of Lemma \ref{vaindectrivcent}, the assumption that $Z(\W[\Gamma])$ is trivial plays a crucial role. Without this assumption, we can no longer assert that $\pi_K(H)\cap\pi_K(K)$ is trivial. Moreover, the indecomposability of $\W[\Gamma]$ cannot be used to deduce information about $H$ and $K$, since in this case, $\W[\Gamma]$ may be decomposable with its center as a direct factor. These obstacles prevent us from applying the same argument as in the proof of Lemma \ref{vaindectrivcent}. \medskip \\
\noindent
To show that $\VA[\Gamma]$ remains indecomposable even when $Z(\W[\Gamma])\neq\{\id\}$, we proceed in steps. As before, we assume that $\VA[\Gamma]=H\times K$ and we perform the projection $\pi_K:H\times K\longrightarrow\W[\Gamma]$. Since $\pi_K$ is surjective, we obtain $\pi_K(K)\cdot\pi_K(H)=\W[\Gamma]$.
Furthermore, their intersection $\pi_K(K)\cap\pi_K(H)$ must commute with the entirety of $\W[\Gamma]$, so it must be contained in its center $Z(\W[\Gamma])=\{\id,w_0\}$. \medskip \\
Since $Z(\W[\Gamma])$ is cyclic of order two generated by the (central) longest element $w_0$, this leads to two possible cases:
\begin{enumerate}
    \item [\textbf{(1.)}]$\pi_K(K)\cap\pi_K(H)=\{\id\}$;
    \item [\textbf{(2.)}]$\pi_K(K)\cap\pi_K(H)=Z(\W[\Gamma])$.
\end{enumerate}
\noindent In the remainder of this section we will first show that case \textbf{(2.)} leads to a contradiction. Once this is established, it follows that the intersection $\pi_K(H)\cap\pi_K(H)$ must necessarily be trivial, as in case \textbf{(1.)}. We then show in Lemma \ref{vaindecnontrivcenter} that one of the two factors $H$ or $K$ must be trivial, thereby proving that $\VA[\Gamma]$ is indecomposable when $\W[\Gamma]$ has non-trivial center.

\begin{lem}\label{case2impossibile}
Let $\Gamma$ be a Coxeter graph such that $Z(\W[\Gamma])\neq \{\id\}$. Suppose that $\VA[\Gamma]$ decomposes as $H\times K$. Then $\pi_K(K)\,\cap\,\pi_K(H)=\{\id\}$.
\end{lem}
\noindent To show Lemma \ref{case2impossibile}, we first establish a preliminary result about the action of the (central) longest element $w_0\in \W[\Gamma]$ on the kernel $\KVA[\Gamma]$.\medskip
\\
\noindent Recall that the virtual Artin group $\VA[\Gamma]$ has the semidirect product structure $\VA[\Gamma]=\KVA[\Gamma]\rtimes \W[\Gamma]$, where $\W[\Gamma]$ acts on $\KVA[\Gamma]$ by conjugation as follows: \[
    w (k)= \iota_{\W}(w) \;k\;\iota_{\W}(w)^{-1},
    \]
    for $w\in \W[\Gamma]$ and $k\in \KVA[\Gamma]$. Sometimes, with abuse of notation, we write $w$ its image $\iota_W(w)\in \VA[\Gamma]$, as explained in Notation \ref{notationactionWonKVA}.

\begin{lem}\label{w0dnonfissak}
    Let $\Gamma$ be a Coxeter graph of spherical type such that $Z(\W[\Gamma])=\{\id,w_0\}$, where $w_0$ is the central longest element of $\W[\Gamma]$. Consider the action of the Coxeter group $\W[\Gamma]$ on the kernel $\KVA[\Gamma]$, given by the semidirect product structure $\VA[\Gamma]=\KVA[\Gamma]\rtimes \W[\Gamma]$, as described in Notation \ref{notationactionWonKVA}. Let $k$ be an element in $\KVA[\Gamma]$. Then, $w_0(k)=k$ implies that $k=\id$. Namely, the action of the central longest element cannot fix any non-trivial $k$ in $\KVA[\Gamma]$.
\end{lem}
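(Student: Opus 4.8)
The idea is to use the fact that $w_0$ is the longest element of a spherical-type Coxeter group and therefore acts on the root system $\Phi[\Gamma]$ in a controlled way: $w_0$ sends positive roots to negative roots, and in particular $w_0(\alpha_s) = -\alpha_{s^*}$ for some permutation $s \mapsto s^*$ of $S$ (induced by the diagram automorphism $-w_0$). Since the action of $\W[\Gamma]$ on the generating set $\{\delta_\beta \mid \beta \in \Phi[\Gamma]\}$ of $\KVA[\Gamma] = \A[\hGamma]$ mirrors its action on $\Phi[\Gamma]$, i.e. $w(\delta_\beta) = \delta_{w(\beta)}$, the automorphism $w_0$ of $\KVA[\Gamma]$ is precisely the automorphism of $\A[\hGamma]$ induced by the permutation $\beta \mapsto w_0(\beta)$ of $V(\hGamma) = \Phi[\Gamma]$. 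This permutation is a fixed-point-free involution on $\Phi[\Gamma]$: it has no fixed points because $w_0(\beta) = \beta$ would force $\beta = -w_0(-\beta)$ and, since $w_0^2 = \id$ and $w_0$ negates no root to itself (roots are nonzero), one checks $w_0(\beta)=\beta$ is impossible for a nontrivial central $w_0$; and it is an involution because $w_0^2 = \id$ in $\W[\Gamma]$.

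First I would reduce to a statement purely about the Artin group $\A[\hGamma]$ and a graph automorphism. Let $\varphi \colon \Phi[\Gamma] \to \Phi[\Gamma]$ be the map $\beta \mapsto w_0(\beta)$; it is an automorphism of the Coxeter graph $\hGamma$ (because the defining data of $\hGamma$ is $\W[\Gamma]$-equivariant: if $\beta = u(\alpha_s)$, $\gamma = u(\alpha_t)$ then $w_0\beta = (w_0 u)(\alpha_s)$, $w_0\gamma = (w_0 u)(\alpha_t)$, so $\hm_{w_0\beta,w_0\gamma} = m_{s,t} = \hm_{\beta,\gamma}$). The induced automorphism $\varphi_* \in \Aut(\A[\hGamma])$ is exactly the action of $w_0$ on $\KVA[\Gamma]$. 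So the claim becomes: if $k \in \A[\hGamma]$ satisfies $\varphi_*(k) = k$, then $k = \id$, i.e. the fixed subgroup of $\varphi_*$ in $\A[\hGamma]$ is trivial.

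Next I would exploit that $\varphi$ is a fixed-point-free involution. Pick any root $\beta_1 \in \Phi[\Gamma]$ and order $\Phi[\Gamma]^+$ so that the pairs $\{\beta, \varphi(\beta)\}$ are respected, building the filtration $\cX_1 \subset \cX_2 \subset \cdots$ of Lemma~\ref{partialorderconnected}/\ref{cXiinftyconnected} in a $\varphi$-invariant way: since $\varphi$ is an involution without fixed points, each finite $\varphi$-stable subset $\cX_i$ is a disjoint union of $2$-element $\varphi$-orbits, and I would arrange the ordering so that $\varphi(\cX_i) = \cX_i$ and $\hGamma_{\cX_i}$ is connected (hence $\infty$-connected) for all $i$. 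Then $\varphi$ restricts to an automorphism of each finite $\infty$-connected graph $\hGamma_{\cX_i}$, inducing $\psi_i \in \Aut(\A[\hGamma_{\cX_i}])$, and $\varphi_*$ is the direct limit of the $\psi_i$. Given $k \in \A[\hGamma]$ with $\varphi_*(k) = k$, we have $k \in \A[\hGamma_{\cX_i}]$ for some $i$, and since the inclusion is injective and equivariant, $\psi_i(k) = k$. So it suffices to prove the finite case: for a finite $\infty$-connected $\hGamma_{\cX_i}$ with a fixed-point-free graph involution $\psi$, the only $\psi_*$-fixed element of $\A[\hGamma_{\cX_i}]$ is the identity.

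The hard part will be this last finite statement. Here I would return to the centralizer element $\theta$ from Lemma~\ref{centralizerinA}: write $\cX_i = \{\gamma_1, \dots, \gamma_n\}$ and set $\theta = \delta_{\gamma_1}^2 \cdots \delta_{\gamma_n}^2 \in \A[\hGamma_{\cX_i}]$, whose centralizer is $\langle \theta \rangle \cong \mathbb{Z}$. If $k$ is $\psi_*$-fixed and nontrivial, I would argue that $k$ together with $\theta$ generates too large a centralizer or, better, use that $\psi_*$ must preserve the fixed subgroup structure: the fixed subgroup of a graph-automorphism-induced automorphism of an Artin group, when the automorphism has no fixed vertices, should be trivial — one clean route is to note that $\psi$ as a permutation of $\cX_i$ has all orbits of size $2$, so we can decompose $\cX_i$ into a "diagonal" copy; alternatively, and more robustly, I would send everything through $\omega \colon \A[\hGamma_{\cX_i}] \to \W[\hGamma_{\cX_i}]$ and first show $\omega(k)$ is fixed by the induced involution on the (infinite) Coxeter group $\W[\hGamma_{\cX_i}]$, then combine with a support/length argument as in Theorem~\ref{Qzinfingen} to force $\omega(k)=\id$, i.e. $k \in \CA[\hGamma_{\cX_i}]$, and finally push the fixed-point analysis into the colored Artin group where the retractions $p_X$ of Remark~\ref{godelleparisforinfinity} give enough control to conclude $k=\id$. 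The delicate point throughout is verifying that a $\varphi$-compatible ordering of $\Phi[\Gamma]^+$ with all $\hGamma_{\cX_i}$ connected genuinely exists — this requires re-running the induction of Lemma~\ref{partialorderconnected} while always adjoining a $\varphi$-orbit rather than a single root, and checking connectivity is not destroyed.
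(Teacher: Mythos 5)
Your reduction of the statement to a fixed-subgroup computation for the graph automorphism $\varphi\colon\beta\mapsto w_0(\beta)$ of $\hGamma$ is correct (though the filtration machinery is unnecessary here: $\Gamma$ is of spherical type, so $\Phi[\Gamma]=V(\hGamma)$ is already finite). The gap is in the step you yourself flag as ``the hard part''. The assertion that a fixed-point-free involution of a Coxeter graph induces an automorphism of the associated Artin group with trivial fixed subgroup is false, even for finite $\infty$-connected graphs: take vertices $\{a,b,c,d\}$ with $m_{a,c}=m_{b,d}=2$ and all other labels $\infty$, and the involution $a\leftrightarrow c$, $b\leftrightarrow d$; this is a fixed-point-free automorphism of an $\infty$-connected graph, yet it fixes the non-trivial element $\sigma_a\sigma_c=\sigma_c\sigma_a$. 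The same element shows that pushing forward to the Coxeter group and arguing as in Theorem \ref{Qzinfingen} cannot work either: a $\varphi$-fixed element need not be quasi-central, and $ac$ is a non-trivial fixed element of $\W$. None of your three sketched routes (centralizer of $\theta$, support argument in $\W[\hGamma]$, retractions in the colored Artin group) uses the one feature of $\varphi$ that makes the lemma true, namely that each vertex is joined to its image by an $\infty$-labelled edge.

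That feature is: centrality of $w_0$ forces $w_0(\beta)=-\beta$ for \emph{every} root $\beta$ (not merely $w_0(\alpha_s)=-\alpha_{s^*}$ for simple ones), and by construction $\hm_{\beta,-\beta}=\infty$. The paper's proof exploits exactly this: for each $\beta\in\Phi[\Gamma]$ one writes, via Theorem \ref{Artingroupsareamalgamated},
\[
\A[\hGamma]\;=\;\A[\hGamma_{\Phi[\Gamma]\backslash\{-\beta\}}]\;\frpp_{\A[\hGamma_{\Phi[\Gamma]\backslash\{\beta,-\beta\}}]}\;\A[\hGamma_{\Phi[\Gamma]\backslash\{\beta\}}],
\]
observes that $w_0$ interchanges the two factors of this amalgam, and invokes \cite[Lemma 3.6]{BellPar20} (the fixed points of an automorphism of an amalgamated product that swaps the two factors lie in the amalgamating subgroup) to conclude that any $w_0$-fixed $k$ lies in $\A[\hGamma_{\Phi[\Gamma]\backslash\{\beta,-\beta\}}]$. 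Intersecting over all $\beta$ and using Van der Lek's intersection theorem for standard parabolic subgroups gives $k\in\A[\hGamma_{\emptyset}]=\{\id\}$. Your outline would become a proof if the ``hard part'' were replaced by this amalgam argument (or an equivalent normal-form computation); as written, the key intermediate claim is both unproved and, in the generality in which you state it, false.
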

\begin{proof}
    Recall that $\KVA[\Gamma]=\A[\hGamma]$, where the set of vertices $V(\hGamma)$ is the root system $\Phi[\Gamma]$, that is finite in this case. Pick any $\beta\in \Phi[\Gamma]$, and consider its opposite root $-\beta\in \Phi[\Gamma]$. By definition of $\hGamma$, we know that $\hm_{\beta,-\beta}=\infty$. Define:
    \begin{align*}
    &\cX^+_{\beta}=\Phi[\Gamma]\backslash\{-\beta\}, \qquad &\cX^-_{\beta}=\Phi[\Gamma]\backslash\{\beta\}, \qquad&\quad \quad\cY_{\beta}=\Phi[\Gamma]\backslash\{-\beta,\beta\}=\cX^+_{\beta}\cap \cX^-_{\beta}.
    \end{align*}
    Using Lemma \ref{Artingroupsareamalgamated}, we find:
    \[ \A[\hGamma]=\A[\hGamma_{\cX^+_{\beta}}]\quad\frpp _{\A[\hGamma_{\cY_{\beta}]}} \;\A[\hGamma_{\cX^-_{\beta}}] .\]
    We also know that the longest element $w_0$, when it is central, sends each root to its opposite, i.e., $w_0(\beta)=-\beta$ for all $\beta\in \Phi[\Gamma]$. Thus, the action of $w_0$ is an automorphism $w_0:\A[\hGamma]\longrightarrow \A[\hGamma]$ of order two, satisfying: 
\[w_0(\A[\hGamma_{\cX^+_{\beta}}])=\A[\hGamma_{\cX^-_{\beta}}]; \qquad \qquad w_0(\A[\hGamma_{\cX^-_{\beta}}])=\A[\hGamma_{\cX^+_{\beta}}].\]
By Lemma 3.6 in \cite{BellPar20}, the set $\{k\in \A[\Gamma]\;|\;w_0(k)=k\}$ is a subset of $\A[\hGamma_{\cY_{\beta}}]$. But this must hold for every root $\beta\in \Phi[\Gamma]$, we find
    \[
    k\in \bigcap_{\beta\in\Phi[\Gamma]}\A[\hGamma_{\cY_{\beta}}]=\A[\hGamma_{\cap_{\beta\in \Phi[\Gamma]}\cY_{\beta}}]=\A[\hGamma_{\emptyset}]=\{\id\}.
    \]
   Here, the first equality follows from Van der Lek's result in his Ph.D. thesis \cite{van1983homotopy}, stating that the intersection of standard parabolic subgroups of an Artin group is the Artin group on the intersection of their generating sets. By the definition of $\mathcal{Y}_{\beta}$, the intersection over all $\beta\in \Phi[\Gamma]$ is empty. Thus, if $k\in\KVA[\Gamma]$ satisfies $w_0(k)=k$, we conclude $k=\id$.
\end{proof}

\begin{proof}[Proof of Lemma \ref{case2impossibile}] 
    Consider the centralizer of the Coxeter group $\W[\Gamma]$ in $\VA[\Gamma]=\KVA[\Gamma]\rtimes \W[\Gamma]$. Thanks to \cite[Theorem 3.3]{BellParThiel}, we know that it is equal to the center of the Coxeter group $Z(\W[\Gamma])$, which is non-trivial. If $\VA[\Gamma]$ decomposes as $H\times K$, then, by Lemma \ref{lemmadecompcentralizer} we can write:
    \[ \{1,w_0\}=Z(\W[\Gamma])=Z_{\VA[\Gamma]}(\W[\Gamma])=(Z_{\VA[\Gamma]}(\W[\Gamma])\cap H)\times (Z_{\VA[\Gamma]}(\W[\Gamma])\cap K).\]
    \noindent Since the cyclic subgroup of order two is indecomposable, we have that one of the two factors must be trivial. Therefore, either $Z(\W[\Gamma])\cap K =\{\id\}$, or $Z(\W[\Gamma])\cap H=\{\id\}$. \medskip\\
    \noindent Suppose first that $Z(\W[\Gamma])\cap H =\{\id\}$ and $Z(\W[\Gamma])<K$. This implies that the longest element $w_0$ belongs to $K$. The group $H$ must commute with all the elements in $K$, and in particular it must commute with $w_0$. Take any $h\in H<\VA[\Gamma]=\KVA[\Gamma]\rtimes \W[\Gamma]$, and write it as $h=k_h\,w$, with $k_h\in \KVA[\Gamma]$ and $w\in \W[\Gamma]$. Hence:
    \[hw_0=w_0h\qquad \Longleftrightarrow \qquad k_hww_0=w_0k_hw\qquad \Longleftrightarrow \qquad k_hw_0\cancel{w}=w_0k_h\cancel{w}\qquad \Longleftrightarrow\qquad w_0(k_h)=k_h.\]
    \noindent By Lemma \ref{w0dnonfissak}, $w_0$ cannot fix any non-trivial element in $\KVA[\Gamma]$, thus $k_h=\id$ and $h=w\in \W[\Gamma]$. This means that $H<\W[\Gamma]$, but we also knew that $H\cap Z(\W[\Gamma])=\{\id\}$.\medskip\\
   \noindent By hypothesis, we know that $\pi_K(H)\cap \pi_K(K)$ contains $w_0$. Therefore, there exists some element $h\in H$ such that $\pi_K(h)=w_0$, but since $h=w\in \W[\Gamma]$, we have $\pi_K(h)=w=w_0$, which is absurd because we had supposed that $H\cap Z(\W[\Gamma])=\{\id\}$. Then, we have $Z(\W[\Gamma])\cap H\neq \{\id\}$. \medskip \\
    \noindent If $Z(\W[\Gamma])\cap K =\{\id\}$ and $Z(\W[\Gamma])< H$, we find analogously a contradiction. Therefore, the intersection $\pi_K(H)\cap \pi_K(K)$ is trivial. 
\end{proof}

\noindent Now that we have established that $\pi_K(H)\cap \pi_K(K)=\{\id\}$, we proceed to show that one of the two direct factors in the decomposition of $\VA[\Gamma]$ as $H\times K$ must be trivial.\medskip \\
\noindent If one between $\pi_K(H)$ and $\pi_K(K)$ is trivial - for example, if $\pi_K(K)=\{\id\}$ and $\pi_K(H)=\W[\Gamma]$ - then we can use the same argument as in the proof of Theorem \ref{vaindectrivcent}. Namely, we first show that $\KVA[\Gamma]$ decomposes as $(\KVA[\Gamma]\cap K)\times (\KVA[\Gamma]\cap H)$, then that $\VA[\Gamma]$ decomposes as $H\times \KVA[\Gamma]$, and finally, we conclude that $H=\{\id\}$ thanks to the result on the trivial centralizer of $\KVA[\Gamma]$.  \medskip\\
\noindent Thus, we may suppose that both $\pi_K(H)$ and $\pi_K(K)$ are non-trivial.\medskip \\
\noindent In the following lemma, we show that if $\W[\Gamma]$ has a nontrivial center, then $\VA[\Gamma]$ is indecomposable.

\begin{lem}\label{vaindecnontrivcenter}
    Let $\Gamma$ be a Coxeter graph such that $Z(\W[\Gamma])\neq \{\id\}$. Then $\VA[\Gamma]$ is indecomposable.
\end{lem}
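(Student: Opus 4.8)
The plan is to assume, for contradiction, that $\VA[\Gamma]=H\times K$ and to show that one of $H,K$ must be trivial. As in the previous lemmas, project via $\pi_K\colon\VA[\Gamma]\longrightarrow\W[\Gamma]$. By Lemma \ref{case2impossibile} we already know $\pi_K(H)\cap\pi_K(K)=\{\id\}$, so $\W[\Gamma]=\pi_K(H)\times\pi_K(K)$. If one of the images $\pi_K(H),\pi_K(K)$ is trivial, the very same argument used in the proof of Lemma \ref{vaindectrivcent} applies verbatim (decompose $\KVA[\Gamma]$ as $(\KVA[\Gamma]\cap K)\times(\KVA[\Gamma]\cap H)$, use its indecomposability from Theorem \ref{kvaindecnonspherical}, and then the triviality of $Z_{\VA[\Gamma]}(\KVA[\Gamma])$ from Theorem \ref{centralizerKVA}) and finishes the job. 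So from now on I would assume both $\pi_K(H)$ and $\pi_K(K)$ are non-trivial. Since $Z(\W[\Gamma])\neq\{\id\}$, the graph $\Gamma$ is of spherical type and $Z(\W[\Gamma])=\{\id,w_0\}$ with $w_0$ the central longest element; moreover $Z_{\VA[\Gamma]}(\W[\Gamma])=Z(\W[\Gamma])$ by \cite[Theorem 3.3]{BellParThiel}. Applying Lemma \ref{lemmadecompcentralizer} to $\W[\Gamma]\subset\VA[\Gamma]$, the order-two group $Z_{\VA[\Gamma]}(\W[\Gamma])$ splits as the product of its intersections with $H$ and $K$, so $\iota_{\W}(w_0)$ lies entirely in one factor; relabelling if necessary, I would assume $\iota_{\W}(w_0)\in K$.

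The key step, which I expect to be the crux of the argument, is to show that $H\subseteq\iota_{\W}(\W[\Gamma])$, i.e.\ that every element of $H$ has trivial $\KVA[\Gamma]$-component. Fix $h\in H$ and write $h=k_h\,\iota_{\W}(w)$ with $k_h\in\KVA[\Gamma]$ and $w=\pi_K(h)\in\W[\Gamma]$, using the semidirect product structure $\VA[\Gamma]=\KVA[\Gamma]\rtimes\W[\Gamma]$. Since $h$ commutes with $\iota_{\W}(w_0)\in K$ and $w_0$ is central in $\W[\Gamma]$, the factor $\iota_{\W}(w)$ cancels from the identity $h\,\iota_{\W}(w_0)=\iota_{\W}(w_0)\,h$, leaving $k_h\,\iota_{\W}(w_0)=\iota_{\W}(w_0)\,k_h$; as $\iota_{\W}(w_0)$ has order two, this says exactly $w_0(k_h)=k_h$ for the conjugation action of $\W[\Gamma]$ on $\KVA[\Gamma]$. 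By Lemma \ref{w0dnonfissak} this forces $k_h=\id$, hence $h=\iota_{\W}(w)$, and therefore $H\subseteq\iota_{\W}(\W[\Gamma])$.

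It then remains to complete the argument. First, $\KVA[\Gamma]\subseteq K$: given $g\in\KVA[\Gamma]$, write $g=h\,k$ with $h\in H\subseteq\iota_{\W}(\W[\Gamma])$ and $k\in K$; applying $\pi_K$ gives $\pi_K(h)\,\pi_K(k)=\id$ with $\pi_K(h)\in\pi_K(H)$ and $\pi_K(k)\in\pi_K(K)$, and since $\pi_K(H)\cap\pi_K(K)=\{\id\}$ we get $\pi_K(h)=\id$, whence $h\in\iota_{\W}(\W[\Gamma])\cap\KVA[\Gamma]=\{\id\}$ and $g=k\in K$. Now $H$ commutes with $K$ and $K\supseteq\KVA[\Gamma]$, so $H\subseteq Z_{\VA[\Gamma]}(\KVA[\Gamma])$, which is trivial by Theorem \ref{centralizerKVA}. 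Hence $H=\{\id\}$, contradicting that both factors are non-trivial; therefore $\VA[\Gamma]$ is indecomposable. Combined with Lemma \ref{vaindectrivcent}, this also completes the proof of Theorem \ref{vaindec}. The only genuinely delicate point is the middle step: it relies on the availability of the central longest element $w_0$ (which is precisely why the case $Z(\W[\Gamma])\neq\{\id\}$ is isolated), and on Lemma \ref{w0dnonfissak} to guarantee that its action on $\KVA[\Gamma]$ has no non-trivial fixed points.
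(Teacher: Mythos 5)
Your proof is correct, but in the main case it takes a genuinely different route from the paper. After Lemma \ref{case2impossibile} gives $\pi_K(H)\cap\pi_K(K)=\{\id\}$, the paper simply observes that any $g\in\KVA[\Gamma]$ written as $g=hk$ must have $\pi_K(h)=\pi_K(k)^{-1}$ in that trivial intersection, so $\KVA[\Gamma]=(\KVA[\Gamma]\cap H)\times(\KVA[\Gamma]\cap K)$; it then invokes the indecomposability of $\KVA[\Gamma]$ (Corollary \ref{kvaindecspherical}) to place $\KVA[\Gamma]$ entirely inside one factor, and finishes with $Z_{\VA[\Gamma]}(\KVA[\Gamma])=\{\id\}$. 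You instead re-run the $w_0$-computation from the proof of Lemma \ref{case2impossibile}: locating $\iota_{\W}(w_0)$ in one factor via Lemma \ref{lemmadecompcentralizer}, using commutation with it plus Lemma \ref{w0dnonfissak} to show the other factor sits inside $\iota_{\W}(\W[\Gamma])$, and deducing $\KVA[\Gamma]\subseteq K$ directly. Your version avoids invoking the indecomposability of $\KVA[\Gamma]$ in the main case (using it only in the side case where one projection is trivial --- a case which, incidentally, your main argument also covers, since $\iota_{\W}(w_0)\notin\KVA[\Gamma]$ forces it into the factor with non-trivial projection), at the price of re-using the longest-element machinery that the paper confines to Lemma \ref{case2impossibile}. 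The paper's version is shorter and runs more closely parallel to the trivial-center case of Lemma \ref{vaindectrivcent}, but both arguments are sound and rest on the same three pillars: Lemma \ref{case2impossibile}, the structure of $\KVA[\Gamma]$, and Theorem \ref{centralizerKVA}.
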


\begin{proof}
Suppose that $\VA[\Gamma]=H\times K$. We aim to show that either $H=\{\id\}$ or $K=\{\id\}$.
    By Lemma \ref{case2impossibile}, we know that $\pi_K(K)\,\cap\,\pi_K(H)=\{\id\}$.\\
    If $\VA[\Gamma]$ allows the decomposition $H\times K$, then $\KVA[\Gamma]$ decomposes as \[\KVA[\Gamma]=(\KVA[\Gamma]\cap H)\,\times\, (\KVA[\Gamma]\cap K).\]
    This follows because any element $g\in \KVA[\Gamma]<\VA[\Gamma]$ can be written as $g=h\cdot k$ with $h\in H$, $k\in K$. By definition, $\pi_K(g)=\id=\pi_K(h)\cdot\pi_K(k)$, which implies that $\pi_K(h)=\pi_K(k^{-1})$. Since $\pi_K(K)\cap\pi_K(H)=\{\id\}$, we conclude that $\pi_K(h)=\pi_K(k)=\id$, so $h,k\in \KVA[\Gamma]$. Therefore, $\KVA[\Gamma]$ decomposes as $(\KVA[\Gamma]\cap H)\times (\KVA[\Gamma]\cap K)$.\medskip\\
    By Corollary \ref{kvaindecspherical}, we know that $\KVA[\Gamma]$ is indecomposable. Hence, either $(\KVA[\Gamma]\cap K)=\KVA[\Gamma]$ and $(\KVA[\Gamma]\cap H)=\{\id\}$, or $(\KVA[\Gamma]\cap H)=\KVA[\Gamma]$ and $(\KVA[\Gamma]\cap K)=\{\id\}$.\medskip\\
    We analyze the first case, as the second is analogous. If $(\KVA[\Gamma]\cap K)=\KVA[\Gamma]$, we have $\KVA[\Gamma]<K$. Since $H\subset Z_{\VA[\Gamma]}(K)$, it follows that that $H\subset Z_{\VA[\Gamma]}(\KVA[\Gamma])$. However, by Theorem \ref{centralizerKVA}, the centralizer $Z_{\VA[\Gamma]}(\KVA[\Gamma])$ is trivial. Therefore, $H=\{\id\}$. \medskip \\
    \noindent If we suppose that $(\KVA[\Gamma]\cap H)=\KVA[\Gamma]$ and $(\KVA[\Gamma]\cap K)=\{\id\}$, we get that $\KVA[\Gamma]<H$ and we conclude that $K=\{\id\}$ in the same way. Hence, if we assume $\VA[\Gamma]$ decomposable as $H\times K$, we obtain that one of the two factors is trivial.
\end{proof}

\noindent Finally, the union of Lemmas \ref{vaindectrivcent} and \ref{vaindecnontrivcenter} provides the following theorem:

\begin{thm}\label{vaindec}
    Let $\Gamma$ be a connected Coxeter graph. Then $\VA[\Gamma]$ is indecomposable.
\end{thm}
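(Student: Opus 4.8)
The plan is to reduce the statement immediately to the two lemmas that have already been proved, so the ``proof'' of Theorem~\ref{vaindec} amounts to a case split on the center of the associated Coxeter group. Concretely, I would argue as follows. Let $\Gamma$ be a connected Coxeter graph, and suppose for contradiction (or rather, suppose in order to analyze) that $\VA[\Gamma]=H\times K$ is a decomposition into two subgroups. The relevant dichotomy is whether $Z(\W[\Gamma])$ is trivial or not; these are the only two possibilities, and in each case an earlier lemma already closes the argument.

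\medskip\noindent
\textbf{Case 1: $Z(\W[\Gamma])=\{\id\}$.} Here I would simply invoke Lemma~\ref{vaindectrivcent}, which states precisely that under this hypothesis $\VA[\Gamma]$ is indecomposable. Nothing more is needed: that lemma's proof already carries out the projection via $\pi_K$, uses Corollary~\ref{trivialcenterthenindecompo} to see that $\W[\Gamma]$ is indecomposable, deduces that one factor lands inside $\KVA[\Gamma]$, applies Theorem~\ref{kvaindecnonspherical} (indecomposability of $\KVA[\Gamma]$), and finally kills the remaining factor using $Z_{\VA[\Gamma]}(\KVA[\Gamma])=\{\id\}$ from Theorem~\ref{centralizerKVA}.

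\medskip\noindent
\textbf{Case 2: $Z(\W[\Gamma])\neq\{\id\}$.} Then it is standard that $\Gamma$ must be of spherical type and the longest element $w_0$ is central, so $Z(\W[\Gamma])=\{\id,w_0\}$ is cyclic of order two. In this situation I would cite Lemma~\ref{vaindecnontrivcenter}, which asserts exactly that $\VA[\Gamma]$ is indecomposable here. Its proof first rules out (Lemma~\ref{case2impossibile}, using the action-of-$w_0$ result Lemma~\ref{w0dnonfissak}) the possibility that $\pi_K(H)\cap\pi_K(K)$ contains $w_0$, then proceeds exactly as in Case~1: $\KVA[\Gamma]$ splits compatibly, its indecomposability (Corollary~\ref{kvaindecspherical}) forces $\KVA[\Gamma]$ into one factor, and triviality of $Z_{\VA[\Gamma]}(\KVA[\Gamma])$ (Theorem~\ref{centralizerKVA}) finishes it.

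\medskip\noindent
Since every connected Coxeter graph falls into exactly one of these two cases, and in each case one of the factors $H,K$ must be trivial, $\VA[\Gamma]$ is indecomposable. There is genuinely no obstacle left at this level: all the substantive work — the $\infty$-connection machinery for $\KVA[\Gamma]=\A[\hGamma]$, the centralizer computation, and the Coxeter-theoretic input of Nuida and Paris — has been absorbed into the lemmas cited above, so this final theorem is purely a bookkeeping assembly of Lemmas~\ref{vaindectrivcent} and~\ref{vaindecnontrivcenter}. If I had to flag a ``hard part,'' it would only be to make sure the two cases are genuinely exhaustive, which is immediate from the classical fact that $Z(\W[\Gamma])$ is nontrivial precisely when $\Gamma$ is spherical with central longest element.
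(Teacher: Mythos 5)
Your proposal is correct and is exactly the paper's argument: Theorem \ref{vaindec} is stated there as the immediate union of Lemma \ref{vaindectrivcent} (trivial-center case) and Lemma \ref{vaindecnontrivcenter} (non-trivial-center case), with the exhaustiveness of the dichotomy following from the classical characterization of when $Z(\W[\Gamma])$ is non-trivial. Nothing further is needed.
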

\noindent This final result completely answers the question of decomposability for virtual Artin groups. 

\section{Automorphism group}\label{autgroup}
In the previous section, we demonstrated that any irreducible virtual Artin group  $\VA[\Gamma]$ is indecomposable. It is worth noting that, in the study of Coxeter and Artin groups, it is quite rare to obtain results that hold in full generality without imposing assumptions on the Coxeter graph $\Gamma$.\medskip \\
\noindent In this section, we extend this result to derive several properties of the automorphism group of $\VA[\Gamma]$.\medskip \\
\noindent By definition, it is straightforward to observe that if $\Gamma$ is a Coxeter graph with connected components $\Gamma_1,\ldots,\Gamma_k$, then $\VA[\Gamma]$ decomposes as the direct product \[\VA[\Gamma]=\VA[\Gamma_1]\times \cdots\times \VA[\Gamma_k].\]
\noindent
Let $G$ be a group. We recall that a \textit{Remak decomposition} of $G$ is a decomposition of the form $G=H_1\times \cdots \times H_l$, where each $H_i$ is non-trivial and indecomposable, for all $i\in \{1,\ldots,l\}$. Using the result on indecomposability, we derive the following theorem.

\begin{thm}\label{RemakVA}
Let $\Gamma$ be a Coxeter graph, and let $\Gamma_1,\ldots , \Gamma_k$ be its connected components. Let $\VA[\Gamma]=H_1\times \cdots \times H_l$ be a Remak decomposition of the virtual Artin group associated with $\Gamma$. Then, $k=l$ and, up to permutation, $H_i=\VA[\Gamma_i]$ for all $i\in \{1,\ldots,k\}$.
\end{thm}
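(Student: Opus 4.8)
The plan is to deduce the statement from the Krull--Remak--Schmidt type uniqueness for direct product decompositions, once we know that each factor $\VA[\Gamma_i]$ is indecomposable. The decomposition $\VA[\Gamma]=\VA[\Gamma_1]\times\cdots\times\VA[\Gamma_k]$ into the connected components is given, and by Theorem \ref{vaindec} each $\VA[\Gamma_i]$ is indecomposable and non-trivial (a virtual Artin group on a non-empty vertex set is infinite, hence non-trivial). So $\VA[\Gamma]=\VA[\Gamma_1]\times\cdots\times\VA[\Gamma_k]$ is itself a Remak decomposition; it remains to compare it with an arbitrary one $\VA[\Gamma]=H_1\times\cdots\times H_l$.

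First I would recall that $Z(\VA[\Gamma])=\{\id\}$: this is stated in the excerpt (the authors of \cite{BellParThiel} prove the center of $\VA[\Gamma]$ is trivial for every $\Gamma$, and hence for the connected pieces as well — actually for the reducible $\Gamma$ the center is the product of the centers of the components, each of which is trivial). A group with trivial center has \emph{unique} Remak decomposition up to reordering of the factors, by the centreless version of the Krull--Remak--Schmidt theorem (this form does not require any chain conditions; see, e.g., Robinson's \emph{A Course in the Theory of Groups}, or Kurosh). Applying this: any two Remak decompositions of $\VA[\Gamma]$ have the same number of factors and the factors coincide up to permutation. Comparing $\VA[\Gamma]=H_1\times\cdots\times H_l$ with $\VA[\Gamma]=\VA[\Gamma_1]\times\cdots\times\VA[\Gamma_k]$ then forces $l=k$ and, after a permutation $\pi\in\mathfrak S_k$, $H_i=\VA[\Gamma_{\pi(i)}]$ as subgroups of $\VA[\Gamma]$; relabeling gives $H_i=\VA[\Gamma_i]$.

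The main obstacle — and the only genuine point of care — is justifying that the centreless Krull--Remak--Schmidt theorem applies without finiteness or chain hypotheses on $\VA[\Gamma]$, since virtual Artin groups are typically infinite and not Noetherian. The clean statement to invoke is: if $G=A_1\times\cdots\times A_m=B_1\times\cdots\times B_n$ with all $A_i,B_j$ indecomposable and $Z(G)=\{\id\}$, then $m=n$ and there is a permutation matching $A_i$ with $B_{\sigma(i)}$ \emph{as subgroups}, not merely up to isomorphism. I would either cite this directly or sketch the short argument: the central projections $\pi_i:G\to A_i$ restricted to the $B_j$'s, together with the vanishing of $[A_i,A_j]$ and $Z(G)=1$, show that each $B_j$ is carried isomorphically onto exactly one $A_i$ and that this $A_i$ coincides with $B_j$ inside $G$. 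Once this is in hand, the theorem is immediate, and it also feeds directly into Theorem \ref{homoautvasn} and Corollary \ref{autfiniteindex}, since any automorphism of $\VA[\Gamma]$ permutes the unique Remak factors $\VA[\Gamma_i]$.
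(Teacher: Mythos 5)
Your proposal is correct and follows essentially the same route as the paper: both reduce the statement to the two facts that each $\VA[\Gamma_i]$ is non-trivial and indecomposable (Theorem \ref{vaindec}) and that $Z(\VA[\Gamma])$ is trivial, and then conclude by uniqueness of Remak decompositions for centreless groups. The only difference is one of packaging --- the paper proves this uniqueness inline, factor by factor, via the centralizer decomposition Lemma \ref{lemmadecompcentralizer} and the identity $Z_{\VA[\Gamma]}\bigl(\prod_{j\neq i}H_j\bigr)=H_i$, whereas you cite the general centreless Krull--Remak--Schmidt theorem (which indeed needs no chain conditions) and sketch the equivalent projection argument.
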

\begin{proof}
   We know that each $\Gamma_i$ is connected. Therefore, from the previous section, $\VA[\Gamma_i]$ is non-trivial and indecomposable for all $i\in \{1,\ldots,k\}$. Consequently, the decomposition $\VA[\Gamma]=\VA[\Gamma_1]\times \cdots \times \VA[\Gamma_k]$ forms a Remak decomposition of $\VA[\Gamma]$.\medskip
    \\
    \noindent As shown in \cite[Corollary 3.4]{BellParThiel}, the center $Z(\VA[\Gamma])$ is trivial for any $\Gamma$ Coxeter graph. By hypothesis, we can also decompose $\VA[\Gamma]$ as $H_1\times \cdots\times H_l$, where each $H_j$ is non-trivial and indecomposable, for $j\in \{1,\ldots,l\}$. Notably, $Z(\VA[\Gamma])=Z(H_1)\times \cdots\times Z(H_l)$, implying that each factor must have trivial center. Furthermore, the centralizer of a factor $Z_{\VA[\Gamma]}(H_1\times \cdots \times H_{j-1}\times H_{j+1}\times \cdots \times H_l)$ is given by $H_j$, for $j\in \{1,\ldots,l\}$. \medskip \\
    \noindent
    We now have the following two Remak decompositions:
    \begin{equation}
    \label{eqremakdec} \VA[\Gamma]=H_1\times \cdots \times H_l=\VA[\Gamma_1]\times \cdots \times\VA[\Gamma_k].\end{equation}
    \noindent We aim to show that, up to a permutation of the factors, these two decompositions coincide.\\
    Thanks to Lemma \ref{lemmadecompcentralizer}, we know that the centralizer $Z_{\VA[\Gamma]}(H)$ of a subgroup $H\leq \VA[\Gamma]$ decomposes as \[(Z_{\VA[\Gamma]}(H)\,\cap\,\VA[\Gamma_1])\times \cdots \times (Z_{\VA[\Gamma]}(H)\,\cap\,\VA[\Gamma_k]).\]
    \noindent In particular, since $Z_{\VA[\Gamma]}(H_2\times \cdots \times H_l)=H_1$, we deduce that $H_1= (H_1\,\cap\,\VA[\Gamma_1])\times \cdots \times (H_1\,\cap\, \VA[\Gamma_k])$. Since the factor $H_1$ is indecomposable, it follows that $H_1\,\cap\, \VA[\Gamma_i]=H_1$ for some $i\in \{1,\ldots,k\}$, and $H_1\,\cap\,\VA[\Gamma_h]=\{\id\}$ for $h\neq i$. Without loss of generality, assume $i=1$, and so that $\,H_1\leq\VA[\Gamma_1]$. \medskip \\
    \noindent Using the same reasoning, we show that the centralizer $Z_{\VA[\Gamma]}(\VA[\Gamma_2]\times \cdots \times \VA[\Gamma_k])=\VA[\Gamma_1]$ decomposes as $(\VA[\Gamma_1]\,\cap\,H_1)\times \cdots \times (\VA[\Gamma_1]\,\cap\,H_l)$. Since $\VA[\Gamma_1]$ is indecomposable by Theorem \ref{vaindec}, only one factor is non-trivial. By the above considerations, this must be $\VA[\Gamma_1]\,\cap\, H_1=\VA[\Gamma_1]$. Thus, $\VA[\Gamma_1]\leq H_1$, and with the reverse inclusion we conclude $\VA[\Gamma_1]=H_1$.\medskip\\
    \noindent Now we move to study the centralizer of $H_1\times H_3\times \cdots \times H_l$ in $\VA[\Gamma]$. Again, $Z_{\VA[\Gamma]}(H_1\times H_3\times \cdots \times H_l)=H_2$, and applying Lemma \ref{lemmadecompcentralizer} we get that $H_2$ decomposes as $H_2= (H_2\,\cap\,\VA[\Gamma_1])\times \cdots \times (H_2\,\cap\, \VA[\Gamma_k])$. However, the indecomposability of $H_2$ implies that only one factor in the previous decomposition is non-trivial, and thus equal to $H_2$. Since we have shown that $\VA[\Gamma_1]=H_1$, we must have
    \[
    H_2= (H_2\,\cap\,H_1)\times(H_2\cap\VA[\Gamma_2])\times \cdots \times (H_2\,\cap\, \VA[\Gamma_k]).
    \]
    We know that $H_1\cap H_2=\{\id\}$, hence 
    \begin{equation}
        H_2= (H_2\cap\,\VA[\Gamma_2])\times \cdots \times (H_2\,\cap\, \VA[\Gamma_k]).
    \end{equation}
\noindent As we did before, thanks to the indecomposability of $H_2$ we obtain that there exists some $i\in \{2,\ldots,k\}$ such that $H_2\cap\,\VA[\Gamma_i]=H_2$ and $H_2\cap\,\VA[\Gamma_h]=\{\id\}$ for all $h\neq i$ with $h\in \{2,\ldots,k\}$. Modulo a permutation of the factors, we can suppose that $i=2$ and that $H_2\cap\VA[\Gamma_2]=H_2$, thus that $H_2<\VA[\Gamma_2]$.\medskip \\
\noindent By repeating the same argument as before, we show that $\VA[\Gamma_2]<H_2$ and therefore that $\VA[\Gamma_2]=H_2$.
 By recursively applying this reasoning, we get that each $H_j$ for $j\in \{1,\ldots,l\}$ must contain and be contained in $\VA[\Gamma_i]$, with $i$ in $\{j,\ldots,k\}$. Therefore, we establish that $l=k$ and, up to a permutation of the factors, $\VA[\Gamma_i]=H_i$, for all $i\in \{1,\ldots,k\}$.
\end{proof}

\noindent This theorem establishes that the only possible Remak decomposition of a virtual Artin group $\VA[\Gamma]$ is the one relative to its irreducible components. This result has a direct impact on understanding the automorphism group of $\VA[\Gamma]$. 

\begin{thm}\label{homoautvasn}
 Let $\Gamma$ be a Coxeter graph, and let $\Gamma_1,\ldots,\Gamma_k$ be its connected components. Then there exists an homomorphism $\Psi:\Aut(\VA[\Gamma])\longrightarrow \mathfrak{S}_k$ such that, for every $\varphi\in \Aut(\VA[\Gamma])$ and every $i\in \{1,\ldots,k\}$, 
 \[
\varphi(\VA[\Gamma_i])= \VA[\Gamma_{\Psi(\varphi)(i)}].
 \]
   
\end{thm}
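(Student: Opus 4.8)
The strategy is the standard one for deducing a permutation action on factors from a Krull–Remak–Schmidt–type uniqueness statement, here supplied by Theorem~\ref{RemakVA}. Fix a Coxeter graph $\Gamma$ with connected components $\Gamma_1,\ldots,\Gamma_k$, so that $\VA[\Gamma]=\VA[\Gamma_1]\times\cdots\times\VA[\Gamma_k]$ is a Remak decomposition (each factor is non-trivial and indecomposable by Theorem~\ref{vaindec}). Given $\varphi\in\Aut(\VA[\Gamma])$, apply $\varphi$ to this decomposition: since $\varphi$ is an isomorphism of $\VA[\Gamma]$ onto itself, $\VA[\Gamma]=\varphi(\VA[\Gamma_1])\times\cdots\times\varphi(\VA[\Gamma_k])$ is again a Remak decomposition. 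By the uniqueness statement of Theorem~\ref{RemakVA}, there is a permutation $\sigma_\varphi\in\mathfrak{S}_k$ with $\varphi(\VA[\Gamma_i])=\VA[\Gamma_{\sigma_\varphi(i)}]$ for all $i$. Define $\Psi(\varphi):=\sigma_\varphi$.

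**Well-definedness and homomorphism property.** First I would check that $\sigma_\varphi$ is uniquely determined by $\varphi$: if $\varphi(\VA[\Gamma_i])=\VA[\Gamma_j]=\VA[\Gamma_{j'}]$ then $j=j'$, because distinct connected components give distinct (and even non-isomorphic-as-subgroups-in-this-internal-sense, but it suffices that they are literally different subgroups of $\VA[\Gamma]$ with trivial pairwise intersection) factors; more precisely $\VA[\Gamma_j]\cap\VA[\Gamma_{j'}]=\{\id\}$ when $j\neq j'$, while $\VA[\Gamma_j]$ is non-trivial, forcing $j=j'$. Hence $\Psi$ is a well-defined map $\Aut(\VA[\Gamma])\to\mathfrak{S}_k$. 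For the homomorphism property, take $\varphi,\psi\in\Aut(\VA[\Gamma])$; then
\[
(\varphi\circ\psi)(\VA[\Gamma_i])=\varphi\bigl(\psi(\VA[\Gamma_i])\bigr)=\varphi\bigl(\VA[\Gamma_{\Psi(\psi)(i)}]\bigr)=\VA[\Gamma_{\Psi(\varphi)(\Psi(\psi)(i))}],
\]
so $\Psi(\varphi\circ\psi)=\Psi(\varphi)\circ\Psi(\psi)$. The identity automorphism clearly maps to $\mathrm{id}\in\mathfrak{S}_k$, and the identity $\Psi(\varphi)\circ\Psi(\varphi^{-1})=\mathrm{id}$ then gives that $\Psi$ lands in the group $\mathfrak{S}_k$ (each $\Psi(\varphi)$ is genuinely a bijection, which also follows directly since $\varphi$ is a bijection permuting the $k$ factors).

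**Main obstacle.** There is essentially no deep obstacle left: the entire weight of the argument sits in Theorem~\ref{RemakVA}, which has already been established using the indecomposability theorem (Theorem~\ref{vaindec}) together with the triviality of the center $Z(\VA[\Gamma])$ from \cite{BellParThiel}. The only genuinely careful point is the uniqueness clause — that the matching of factors is forced, not merely possible — which is exactly what Theorem~\ref{RemakVA} asserts (``up to permutation, $H_i=\VA[\Gamma_i]$''); I would invoke it verbatim rather than reprove it. The remaining verifications (that $\Psi$ is single-valued and multiplicative) are the routine bookkeeping sketched above. I would close by remarking that $\Psi$ is the homomorphism whose existence yields Corollary~\ref{autfiniteindex}: its kernel consists of those automorphisms preserving each $\VA[\Gamma_i]$, and restriction gives an embedding of $\ker\Psi$ into $\Aut(\VA[\Gamma_1])\times\cdots\times\Aut(\VA[\Gamma_k])$, with $\mathrm{Aut}$ of the components sitting inside $\Aut(\VA[\Gamma])$ as a finite-index (index dividing $k!$) normal subgroup.
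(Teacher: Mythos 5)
Your proposal is correct and is exactly the argument the paper intends: the paper states only that Theorem \ref{homoautvasn} ``follows directly from Theorem \ref{RemakVA}'', and your write-up supplies precisely that deduction (applying $\varphi$ to the Remak decomposition $\VA[\Gamma_1]\times\cdots\times\VA[\Gamma_k]$, invoking the uniqueness clause of Theorem \ref{RemakVA} to obtain the permutation, and verifying single-valuedness and multiplicativity). No discrepancies to report.
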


\noindent The proof of Theorem \ref{homoautvasn} follows directly from Theorem \ref{RemakVA}. Observe now that the kernel of such homomorphism $\Psi:\Aut(\VA[\Gamma])\longrightarrow \mathfrak{S}_k$ is precisely the subgroup of automorphisms $\varphi\in \Aut(\VA[\Gamma])$ such that $\varphi(\VA[\Gamma_i])=\VA[\Gamma_i]$ for all $i\in \{1,\ldots,k\}$. In other words:
\[
\ker(\Psi)=\Aut(\VA[\Gamma_1])\times \cdots \times \Aut(\VA[\Gamma_k]).
\]
\begin{cor}\label{autfiniteindex}
    Let $\Gamma$ be a Coxeter graph, and let $\Gamma_1,\ldots,\Gamma_k$ be its connected components. Then \\$\Aut(\VA[\Gamma_1])\times \cdots \times \Aut(\VA[\Gamma_k])$ is a finite index normal subgroup of $\Aut(\VA[\Gamma])$.
\end{cor}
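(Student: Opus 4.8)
The plan is to read off the statement from Theorem \ref{homoautvasn} together with the first isomorphism theorem. First I would invoke Theorem \ref{homoautvasn} to obtain the homomorphism $\Psi:\Aut(\VA[\Gamma])\longrightarrow \mathfrak{S}_k$ characterized by $\varphi(\VA[\Gamma_i])=\VA[\Gamma_{\Psi(\varphi)(i)}]$ for all $i$. Being the kernel of a group homomorphism, $N:=\ker(\Psi)$ is automatically a normal subgroup of $\Aut(\VA[\Gamma])$. Moreover the induced injection $\Aut(\VA[\Gamma])/N\hookrightarrow \mathfrak{S}_k$ shows $[\Aut(\VA[\Gamma]):N]\leq k!<\infty$, so $N$ has finite index. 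It then remains only to identify $N$ with $\Aut(\VA[\Gamma_1])\times\cdots\times\Aut(\VA[\Gamma_k])$.

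For that identification I would argue both inclusions using the direct product decomposition $\VA[\Gamma]=\VA[\Gamma_1]\times\cdots\times\VA[\Gamma_k]$. If $\varphi\in N$, then $\varphi$ fixes each $\VA[\Gamma_i]$ setwise, hence its restriction $\varphi_i:=\varphi|_{\VA[\Gamma_i]}$ is an endomorphism of $\VA[\Gamma_i]$; since $\varphi^{-1}\in N$ as well, $\varphi^{-1}|_{\VA[\Gamma_i]}$ is a two-sided inverse, so $\varphi_i\in\Aut(\VA[\Gamma_i])$, and because $\VA[\Gamma]$ is the (internal) direct product of the $\VA[\Gamma_i]$, $\varphi=\varphi_1\times\cdots\times\varphi_k$. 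Conversely, any tuple $(\varphi_1,\dots,\varphi_k)\in\prod_i\Aut(\VA[\Gamma_i])$ assembles into an automorphism $\varphi_1\times\cdots\times\varphi_k$ of $\VA[\Gamma]$ which fixes every factor, hence lies in $N$. This gives $N=\Aut(\VA[\Gamma_1])\times\cdots\times\Aut(\VA[\Gamma_k])$ and completes the argument; in particular the index divides $k!$.

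The corollary is essentially a formal consequence of Theorem \ref{RemakVA} (via Theorem \ref{homoautvasn}), so there is no genuine obstacle: the only point requiring a little care is the claim that an automorphism of $\VA[\Gamma]$ preserving each $\VA[\Gamma_i]$ setwise is determined by, and splits as, the product of its restrictions, which is exactly where the uniqueness of the Remak decomposition (Theorem \ref{RemakVA}) and the indecomposability Theorem \ref{vaindec} are doing their work behind the scenes. Everything else is standard bookkeeping with kernels and direct products.
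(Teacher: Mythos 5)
Your proposal is correct and follows the same route as the paper: the paper likewise obtains the corollary by observing that $\ker(\Psi)$ consists exactly of the automorphisms preserving each $\VA[\Gamma_i]$, identifies this kernel with $\Aut(\VA[\Gamma_1])\times\cdots\times\Aut(\VA[\Gamma_k])$, and concludes normality and finite index from the embedding of the quotient into $\mathfrak{S}_k$. Your write-up simply makes explicit the (routine) verification that such an automorphism splits as the product of its restrictions, which the paper leaves implicit.
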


\noindent Corollary \ref{autfiniteindex} implies that, to study the automorphism group of a virtual Artin group, it suffices to understand the automorphism groups of its irreducible components.\medskip \\
\noindent To date, except in the case where $\Gamma=A_{n-1}$ and $\VA[\Gamma]=\VB_n$ with $n\geq 5$, the automorphism group of $\VA[\Gamma]$ has not been investigated. For the virtual braid group on $n$ strands, the detailed study of Bellingeri and Paris (\cite{BellPar20}) provides a comprehensive description of all homomorphisms, automorphisms, and exterior automorphisms of $\VB_n$, when $n\geq 5$.\medskip \\
\noindent Corollary \ref{autfiniteindex} marks an important step toward analyzing the automorphism group of a virtual Artin group.

\bigskip
\bigskip

\noindent\textit{\textbf{Federica Gavazzi} \\  Université Bourgogne Europe, CNRS, IMB UMR 5584, F-21000 Dijon, France.} \par
\noindent\textit{E-mail address:} \texttt{\href{mailto:Federica.Gavazzi@u-bourgogne.fr}{Federica.Gavazzi@u-bourgogne.fr}}

\printbibliography

\end{document}